\def\bu{\pmb{u}}
\def\bx{\pmb{x}}
\def\by{\pmb{y}}
\def\bz{\pmb{z}}
\def\bone{\pmb{1}}
\def\bbC{\mathbb{C}}
\def\bbO{\mathbb{O}}
\def\bbP{\mathbb{P}}
\def\bbR{\mathbb{R}}
\def\bbX{\mathbb{X}}
\def\scrG{\mathscr{G}}
\def\cI{\mathcal{I}}
\def\cP{\mathcal{P}}
\def\cQ{\mathcal{Q}}
\def\cR{\mathcal{R}}
\def\cX{\mathcal{X}}
\def\cY{\mathcal{Y}}
\def\wtd{\widetilde}
\def\what{\widehat}
\DeclareMathOperator*{\opt}{opt}
\DeclareMathOperator{\diag}{diag}
\DeclareMathOperator{\dist}{dist}
\DeclareMathOperator{\eig}{eig}
\DeclareMathOperator{\grad}{grad}
\DeclareMathOperator{\rank}{rank}
\DeclareMathOperator{\sym}{sym}
\DeclareMathOperator{\tr}{trace} 
\DeclareMathOperator{\F}{F}
\DeclareMathOperator{\HH}{H}
\DeclareMathOperator{\T}{T}
\def\tol{{\tt tol}}
\newtheorem{theorem}{Theorem}[section]
\newtheorem{lemma}{Lemma}[section]
\newtheorem{corollary}{Corollary}[section]
\newtheorem{remark}{{\sc Remark}}[section]
\numberwithin{equation}{section}
\def\sss{\scriptstyle}
\title{Trace Ratio Optimization with an Application to Multi-view Learning}
\author[L. Wang] {Li Wang}
\author[L.-H. Zhang]{Lei-Hong Zhang}
\author[R.-C. Li]{Ren-Cang Li}
\thanks{
	Li Wang is with
	Department of Mathematics and Department of Computer Science and Engineering, University of Texas at Arlington, Arlington, TX 76019-0408, USA. 
	E-mail: li.wang@uta.edu.  \\
	\indent Lei-Hong Zhang is with
	School of Mathematical Sciences and Institute of Computational Science, Soochow University, Suzhou 215006, Jiangsu, China. 
	E-mail: longzlh@suda.edu.cn. \\
	\indent Ren-Cang Li is with
	Department of Mathematics, University of Texas at Arlington, Arlington, TX 76019-0408, USA. 
	E-mail: rcli@uta.edu.}
\begin{document}
\maketitle

\begin{abstract}
A trace ratio optimization problem over the Stiefel manifold is  investigated from the perspectives of both theory
and numerical computations. At least three special cases of the problem have arisen from Fisher's linear discriminant analysis,
canonical correlation analysis, and unbalanced Procrustes problem, respectively. Necessary conditions in the form of
nonlinear eigenvalue problem with eigenvector dependency are established and a numerical method based on
the self-consistent field (SCF) iteration is designed and proved to be always convergent. As an application to  multi-view subspace learning,
a new framework and its instantiated concrete models are proposed and demonstrated on real world data sets. Numerical results
show that the efficiency of the proposed numerical methods and effectiveness of the new multi-view subspace learning models.
\end{abstract}

\section{Introduction}
We are concerned with the following trace ratio maximization problem
\begin{subequations}\label{eq:main-op}
	\begin{equation}\label{eq:main-op-1}
	\max_{X^{\T}X=I_k}f_{\theta}(X),
	\end{equation}
	where $1\le k<n$, $I_k$ is the $k\times k$ identity matrix, and
	\begin{equation}\label{eq:main-f}
	f_{\theta}(X)=\frac {\tr(X^{\T}AX+X^{\T}D)}{[\tr(X^{\T}BX)]^{\theta}},
	\end{equation}
\end{subequations}
$A,\,B\in\bbR^{n\times n}$ are symmetric and $B$ is positive semi-definite with $\rank(B)>n-k$, $D\in\bbR^{n\times k}$,  matrix variable $X\in\bbR^{n\times k}$, and parameter $0\le\theta\le 1$. The condition that $\rank(B)>n-k$ ensures the denominator
of $f_{\theta}(X)$ is always positive for any $X$ such that $X^{\T}X=I_k$.

Problem \eqref{eq:main-op} is a maximization problem on the Stiefel manifold \cite{abms:2008}:
$$
\bbO^{n\times k}=\{X\in\bbR^{n\times k}\,:\,X^{\T}X=I_k\}.
$$
A seemingly more general case than \eqref{eq:main-op} is to maximize
\begin{equation}\label{eq:main-op:gl}
\frac {\tr(X^{\T}AX+X^{\T}D)+c}{[\tr(X^{\T}BX)]^{\theta}}
=\frac {\tr(X^{\T}[A+c/k)I_n]X+X^{\T}D)}{[\tr(X^{\T}BX)]^{\theta}}
\end{equation}
over $X\in\bbO^{n\times k}$, but it is actually a special case of \eqref{eq:main-op} due to the
reformulation on the right-hand side of \eqref{eq:main-op:gl}, where $c\in\bbR$ is a scalar constant.

\subsection{Previous Work}
There are three special cases arising  from data science that have been
fairly well studied in the last decades.

%\noindent {\bf Application I.}
The first special case is with $D=0$ and $\theta=1$:
\begin{equation}\label{eq:OLDA}
\max_{X\in\bbO^{n\times k}}\frac {\tr(X^{\T}AX)}{\tr(X^{\T}BX)}
\end{equation}
arising from Fisher's linear discriminant analysis
\cite{ngbs:2010,zhln:2010,zhln:2013} in the setting of supervised learning. In \cite{ngbs:2010}, \eqref{eq:OLDA} is converted into a zero-finding problem
$$
\mbox{solve}\,\,\eta(\rho)=0
\quad\mbox{with}\quad
\eta(\rho):=\max_{X\in\bbO^{n\times k}}\tr(X^{\T}(A-\rho B)X).
$$
It is proved there that $\eta(\rho)$ is a non-increasing function of $\rho$ and has a unique zero in the generic case, and
the Newton method is applied to find the zero \cite{ngbs:2010}.
This idea of such conversion can be traced back to \cite{dink:1967} in 1967 for fractional programming. In
\cite{zhln:2010,zhln:2013}, however, \eqref{eq:OLDA} is treated as a maximization problem on the Stiefel manifold $\bbO^{n\times k}$.
Their major results are as follows.
The KKT condition of \eqref{eq:OLDA} with respect to the Stiefel manifold is a nonlinear eigenvalue problem
with eigenvector dependency (NEPv) \cite{cazb:2018}:
\begin{equation}\label{eq:NEPv}
H(X)X=X\Lambda,
\end{equation}
where $H(X)\in\bbR^{n\times n}$ is a matrix-valued function of $X\in\bbO^{n\times k}$ and symmetric,
and $\Lambda\in\bbR^{k\times k}$ is not really an unknown since it can be expressed as $\Lambda=X^{\T}H(X)X\in\bbR^{k\times k}$.
NEPv~\eqref{eq:NEPv} is then solved by the so-called self-consistent field (SCF) iteration
\begin{equation}\label{eq:SCF}
\mbox{iterate $H(X_{i-1})X_i=X_i\Lambda_{i-1}$, given $X_0$}
\end{equation}
where $X_i$ is an orthonormal basis matrix associated with the $k$ largest eigenvalues of $H(X_{i-1})$. It is noted that
SCF has been  widely
used in electronic structure calculations for decades \cite{mart:2004,sacs:2010}.
Remarkably, it is proved in \cite{zhln:2010,zhln:2013} that (i) problem \eqref{eq:OLDA} has no local maximizer but global ones, (ii) the SCF iteration
is monotonically convergent in terms of the objective value, and the iterates $X_i$ globally converge to a maximizer in the metric
on the Grassmann manifold $\scrG_k(\bbR^n)$ (the collection of all $k$-dimensional subspaces of $\bbR^n$),
and (iii) the convergence is locally quadratic in the generic case.
A shorter proof of the local quadratic convergence
can be found in \cite{cazb:2018}.

The second special case is with $A=0$ and $\theta=1/2$:
\begin{equation}\label{eq:OCCA-sub}
\max_{X\in\bbO^{n\times k}}\frac {\tr(X^{\T}D)}{\sqrt{\tr(X^{\T}BX)}},
\end{equation}
arising from orthogonal canonical correlation analysis (OCCA) \cite{zhwb:2020} as the kernel of an alternating
iterative scheme. The major results established in \cite{zhwb:2020} for \eqref{eq:OCCA-sub} can be summarized as follows.
The KKT condition of \eqref{eq:OCCA-sub} with respect to the Stiefel manifold $\bbO^{n\times k}$ does not exactly take the form
of an NEPv but can be equivalently converted into one like \eqref{eq:NEPv}. The latter can again be solved
by SCF iteration \eqref{eq:SCF} followed by a post-processing on $X_i$.
The method is
monotonically convergent in the objective value and the iterates $X_i$ globally converge to a critical point that satisfies
an established necessary condition  for a global minimizer.

The third special case is with $B=I_n$ and $\theta=0$:
\begin{equation}\label{eq:procrustes}
\max_{X\in\bbO^{n\times k}} {\tr(X^{\T}AX+X^{\T}D)}.
\end{equation}
This is a rather fundamental problem in numerical linear algebra, optimization, and applied statistics, among others, and is
commonly referred to as the unbalanced Procrustes problem \cite{chtr:2001,edas:1999,elpa:1999,godi:2004,huca:1962,zhys:2020,zhdu:2006}.
In particular,
the following least-squared minimization
\begin{equation}\label{eq:OPLS}
\min_{X\in {\mathbb O}^{n\times k}}\|CX-B\|_{\F}^2
\end{equation}
can be reformulated into \eqref{eq:procrustes} with $A=-C^{\T}C$ and $D=C^{\T}B/2$.
Both \eqref{eq:procrustes} and \eqref{eq:OPLS} can be found in many real world applications
including
the orthogonal least squares regression (OLSR)
for feature extraction \cite{zhwn:2016,nizl:2017}, the  multidimensional similarity structure analysis (SSA) \cite[Chapter 19]{boli:1987},
and the MAXBET problem \cite{geer:1984,liww:2015} in the canonical correlation analysis.
A recent work \cite{zhys:2020} turns \eqref{eq:OPLS}  into an equivalent NEPv
that  is solved by an SCF iteration in the form \eqref{eq:SCF}, among others.

There are some fundamental differences among these special cases \eqref{eq:OLDA}, \eqref{eq:OCCA-sub}, and
\eqref{eq:procrustes}, and their associated NEPv \cite{zhln:2010,zhln:2013,zhwb:2020,zhys:2020}.
Three major differences are as follows.
\begin{itemize}
	\item If $X_{\opt}$ is a maximizer of \eqref{eq:OLDA} then so is $X_{\opt}Q$ for any $Q\in\bbO^{k\times k}$. In this sense any maximizer of \eqref{eq:OLDA} is a representative of a class of maximizers. But that is not the case for \eqref{eq:OCCA-sub}, namely for a maximizer $X_{\opt}$ of \eqref{eq:OCCA-sub}, $X_{\opt}Q$ for a generic $Q\in\bbR^{k\times k}$
	is most likely not another maximizer.
	\item Problem \eqref{eq:OLDA} has no local maximizer but global ones, whereas \eqref{eq:OCCA-sub} and \eqref{eq:procrustes}
	are numerically demonstrated
	to have local  maximizers.
	\item NEPv \eqref{eq:NEPv} for \eqref{eq:OLDA} admits an orthogonal-invariance property: $H(XQ)=H(X)$
	for any  $Q\in\bbO^{k\times k}$, while the ones associated with \eqref{eq:OCCA-sub} and \eqref{eq:procrustes}
	do not have this property.
\end{itemize}

\subsection{Contributions and the Organization of This Paper}
In this paper, our goal is to thoroughly investigate problem \eqref{eq:main-op} as a maximization problem on the Stiefel manifold
$\bbO^{n\times k}$ in both theory and numerical computation. Our major contributions are as follows:
1) We turn the KKT condition of \eqref{eq:main-op} with respect to the Stiefel manifold  equivalently into
an NEPv; 2) We establish crucial necessary conditions, beyond the KKT condition, of local and global maximizers
in terms of the extreme eigenvalues of the NEPv; 3) We completely characterize the role of $D$ in how precisely it determines
maximizers, which is important because when $D=0$, any maximizer represents a class of many associated with an element of
the Grassmann manifold $\scrG_k(\bbR^n)$; 4) A numerical method based on SCF for the NEPv and a post-processing are proposed to efficiently solve \eqref{eq:main-op} as a consequence of our theoretical results, and the method is always
convergent; 5) As an application, we establish a new orthogonal multi-view subspace learning framework and solve it alternatingly
with our method for \eqref{eq:main-op} serving as the computational horse. The framework generalizes
the MAXBET problem.

The rest of this paper is organized as follows. In section~\ref{sec:KKT}, we derive the KKT condition, its associated NEPv, and
important theoretical issues to lay the foundation for the rest of the paper. In section~\ref{sec:RoleD}, we investigate
the role of $D$ in pining down the maximizers. In section~\ref{sec:SCF}, we propose our SCF method for problem \eqref{eq:main-op} and
conduct a detailed convergence analysis of the method. An application to multi-view subspace learning is carried out in
section~\ref{sec:MvSL}. Results of numerical experiments are reported in section~\ref{sec:NumExp}. Finally, we draw our conclusions in
section~\ref{sec:concl}.

{\bf Notation.}
$\bbR^{m\times n}$ is the set of $m\times n$ real matrices and $\bbR^n=\bbR^{n\times 1}$. $I_n\in\bbR^{n\times n}$ is
the identity matrix, and $\bone_n\in\bbR^n$ is the vector of all ones. $\|\bx\|_2$ is the 2-norm of vector $\bx\in \bbR^n$. For $B\in\bbR^{m\times n}$,
$\cR(B)$ is the column subspace and its singular values are denoted by
$\sigma_i(B)$ for $i=1,\ldots,\min(m,n)$ arranged in the nonincreasing order,
and
$$
\|B\|_2=\sigma_1(B),\,\,
\|B\|_{\F}=\sqrt{\sum_{i=1}^{{\rm rank}(B)}[\sigma_i(B)]^2},\,\,
\|B\|_{\tr}=\sum_{i=1}^{{\rm rank}(B)}\sigma_i(B)
$$
are the spectral norm and the Frobenius norm, and the trace norm (also known as the nuclear norm) of $B$,
respectively.
For a symmetric $B\in\bbR^{n\times n}$, $\eig(B)=\{\lambda_i(B)\}_{i=1}^n$ denotes the set of its eigenvalues (counted by multiplicities)
arranged in the nonincreasing order;
$B\succ 0 ~(\succeq 0)$ means that $B$ is positive definite (semi-definite).
MATLAB-like notation is used to access the entries of a matrix or vector:
$X_{(i:j,k:l)}$ to denote the submatrix of a matrix $X$, consisting of the intersections of
rows $i$ to $j$ and columns $k$ to $l$, and when $i : j$ is replaced by $:$, it means all rows, similarly for columns;
$v_{(k)}$ refers the $k$th entry of a vector $v$ and $v_{(i:j)}$
is the subvector of $v$ consisting of the $i$th to $j$th entries inclusive.

\section{KKT Condition and Associated NEPv}\label{sec:KKT}
For convenience, write $f_{\theta}(X)=g_{\theta}(X)+h_{\theta}(X)$, where
$$
g_{\theta}(X)=\frac {\tr(X^{\T}AX)}{[\tr(X^{\T}BX)]^{\theta}}, \quad
h_{\theta}(X)=\frac {\tr(X^{\T}D)}{[\tr(X^{\T}BX)]^{\theta}}.
$$
To find the KKT condition of  problem \eqref{eq:main-op} on the Stiefel manifold
$\bbO^{n\times k}$, first
we will need to find the gradient of $f_{\theta}$ on the manifold. We have
\begin{align*}
\frac{\partial f_{\theta}(X)}{\partial X}
&=\frac 2{[\tr(X^{\T}BX)]^{\theta}}\Big[\,A-\theta g_1(X) B\Big]\,X
+\frac 1{[\tr(X^{\T}BX)]^{\theta}}\Big[\,D-2\theta h_1(X) BX\Big].
\end{align*}
Let $\Pi_X(Z):=Z-X\sym(X^{\T}Z)$, where $\sym(X^{\T}Z)=(X^{\T}Z+Z^{\T}X)/2$. The gradient of $f_\theta$ on the manifold at $X$ is
then given by
\cite[(3.35)]{abms:2008}, \cite[Corollary 1]{chtr:2001}
\begin{align*}
\grad f_{\theta|{{\mathbb O}^{n\times k}}}(X)
&=\Pi_X\left(\frac{\partial f_{\theta}(X)}{\partial X}\right)
=\frac{\partial f_{\theta}(X)}{\partial X}-X\sym\left(X^{\T}\frac{\partial f_{\theta}(X)}{\partial X}\right) \\
&=\frac 2{[\tr(X^{\T}BX)]^{\theta}}\left[AX-\theta g_1(X)BX\right]-X\Lambda_1 \\
&\quad+\frac 1{[\tr(X^{\T}BX)]^{\theta}}\left[D-2\theta h_1(X) BX\right]-X\Lambda_2,
\end{align*}
where $\Lambda_i\in\bbR^{k\times k}$ for $i=1,2$ are symmetric and their explicit forms, although can be written out, are not  important to us. Finally, the KKT condition, also known as the first order optimality condition, is given by
$\grad f_{\theta|{{\mathbb O}^{n\times k}}}(X)=0$, or equivalently,
\begin{subequations}\label{eq:KKT}
	\begin{align} 
	&\frac 2{[\tr(X^{\T}BX)]^{\theta}}\left[AX+\frac D2-\theta f_1(X)BX\right]=X\what\Lambda,\,  \label{eq:KKT-1}\\
	&X\in\bbO^{n\times k},\,\,\what\Lambda^{\T}=\what\Lambda\in\bbR^{k\times k}. \label{eq:KKT-2}
	\end{align}
\end{subequations}
An explicit expression for $\what\Lambda$ can be obtained by pre-multiplying equation \eqref{eq:KKT-1} by
$X^{\T}$, and again it is not important to us.
Equation \eqref{eq:KKT-1} bears some similarity to NEPv \eqref{eq:NEPv},
but not quite the same because of the isolated term $D$. Next, we introduce
\begin{align}
E(X)  &=\frac 2{[\tr(X^{\T}BX)]^{\theta}}\left[A+\frac {DX^{\T}+XD^{\T}}2-\theta f_1(X)B\right]
\label{eq:E(X)}
\end{align}
and consider the following NEPv
\begin{equation}\label{eq:KKT-NEPv}
E(X)X=X\Lambda,\, X\in\bbO^{n\times k}.
\end{equation}
Pre-multiply \eqref{eq:KKT-NEPv} by $X^{\T}$ to get $\Lambda=X^{\T}E(X)X$, always symmetric.

As far as the solution $X$ to NEPv \eqref{eq:KKT-NEPv} is concerned, the scalar factor $2[\tr(X^{\T}BX)]^{-\theta}$ in
\eqref{eq:E(X)} can be merged into $\Lambda$ to give an equivalent NEPv
\begin{equation}\nonumber%\label{eq:KKT-NEPv'}
\left[A+\frac {DX^{\T}+XD^{\T}}2-\theta f_1(X)B\right]X=X\Lambda,\, X\in\bbO^{n\times k}.
\end{equation}
It is interesting to notice that the power $\theta$ in $f_{\theta}$ shows up as a scalar multiplier, dictating how
much $f_1(X)B$ is subtracted.

Our first theorem establishes an equivalency relation between the KKT condition \eqref{eq:KKT} and NEPv \eqref{eq:KKT-NEPv}.

\begin{theorem}\label{thm:KKT}
	$X\in\bbO^{n\times k}$  is a KKT point, i.e., it satisfies \eqref{eq:KKT}, if and only if it is an orthonormal basis matrix of a $k$-dimensional invariant subspace of $E(X)$
	and $X^{\T}D$ is symmetric.
\end{theorem}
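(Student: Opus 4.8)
The plan is to first unwind the geometric phrasing of the conclusion. Saying that $X\in\bbO^{n\times k}$ is an orthonormal basis matrix of a $k$-dimensional invariant subspace of $E(X)$ means, by definition, that $E(X)X=XM$ for some $M\in\bbR^{k\times k}$; since $E(X)$ in \eqref{eq:E(X)} is symmetric (because $A$, $B$ are symmetric and $DX^{\T}+XD^{\T}$ is symmetric) and $X^{\T}X=I_k$, pre-multiplying by $X^{\T}$ forces $M=X^{\T}E(X)X$, which is automatically symmetric. Hence the right-hand side of the claimed equivalence is exactly: $X$ solves NEPv \eqref{eq:KKT-NEPv} together with the side condition that $X^{\T}D$ is symmetric. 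So the statement to prove is the equivalence of \eqref{eq:KKT} with the pair (NEPv \eqref{eq:KKT-NEPv}, $X^{\T}D$ symmetric).

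For the forward implication, I would start from \eqref{eq:KKT-1} and pre-multiply by $X^{\T}$; using $X^{\T}X=I_k$ this gives $\what\Lambda=\frac{2}{[\tr(X^{\T}BX)]^{\theta}}\big[X^{\T}AX+\tfrac12 X^{\T}D-\theta f_1(X)X^{\T}BX\big]$. Since $X^{\T}AX$, $X^{\T}BX$, and $\what\Lambda$ are all symmetric, this identity forces $X^{\T}D$ to be symmetric. With that in hand I would compute $E(X)X$ directly from \eqref{eq:E(X)}: the only nonobvious term is $\tfrac12(DX^{\T}+XD^{\T})X=\tfrac12 D+\tfrac12 X(D^{\T}X)$, and because $X^{\T}D$ (equivalently $D^{\T}X$) is symmetric, the last piece is $X$ times a symmetric matrix. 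Substituting \eqref{eq:KKT-1} for $\frac{2}{[\tr(X^{\T}BX)]^{\theta}}(AX+\tfrac12 D-\theta f_1(X)BX)$ then yields $E(X)X=X\big(\what\Lambda+[\tr(X^{\T}BX)]^{-\theta}X^{\T}D\big)$, i.e.\ NEPv \eqref{eq:KKT-NEPv} holds with a symmetric $\Lambda$.

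The reverse implication runs the same computation backwards. Assuming $E(X)X=X\Lambda$ with $\Lambda$ symmetric and $X^{\T}D$ symmetric, I would expand $E(X)X$ as above, isolate $\frac{2}{[\tr(X^{\T}BX)]^{\theta}}(AX+\tfrac12 D-\theta f_1(X)BX)$, and read off that it equals $X\big(\Lambda-[\tr(X^{\T}BX)]^{-\theta}X^{\T}D\big)$; this is precisely \eqref{eq:KKT-1}, and the coefficient $\what\Lambda:=\Lambda-[\tr(X^{\T}BX)]^{-\theta}X^{\T}D$ is symmetric because both $\Lambda$ and $X^{\T}D$ are, so \eqref{eq:KKT-2} also holds.

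There is no deep obstacle; the proof is essentially bookkeeping. The one point that deserves care — and that explains the shape of the statement — is the handling of the low-rank correction $\tfrac12(DX^{\T}+XD^{\T})$ built into $E(X)$: multiplying it by $X$ regenerates the ``isolated'' term $\tfrac12 D$ of the KKT equation plus a residual $\tfrac12 X(D^{\T}X)$, and this residual can be absorbed into the $X\Lambda$ side without destroying symmetry of the eigenvalue block if and only if $D^{\T}X$ (equivalently $X^{\T}D$) is symmetric. Thus the symmetrization $D\mapsto DX^{\T}+XD^{\T}$ that turns the awkward KKT equation \eqref{eq:KKT-1} into the genuine NEPv \eqref{eq:KKT-NEPv} is precisely paid for by the side condition on $X^{\T}D$, and I would flag in both directions exactly where that symmetry is used.
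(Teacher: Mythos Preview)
Your proposal is correct and follows essentially the same route as the paper's proof: pre-multiply \eqref{eq:KKT-1} by $X^{\T}$ to force $X^{\T}D$ symmetric, then expand $E(X)X$ using $(DX^{\T}+XD^{\T})X=D+X(D^{\T}X)$ to pass between \eqref{eq:KKT-1} and \eqref{eq:KKT-NEPv} with $\Lambda=\what\Lambda+[\tr(X^{\T}BX)]^{-\theta}D^{\T}X$ (and its inverse in the other direction). Your added commentary on why the symmetrization $D\mapsto DX^{\T}+XD^{\T}$ is ``paid for'' by the side condition on $X^{\T}D$ is a nice gloss, though note that in the forward direction the symmetry of $\Lambda$ already follows from $\Lambda=X^{\T}E(X)X$ with $E(X)$ symmetric, so you need the symmetry of $X^{\T}D$ only as a standalone conclusion, not to absorb the residual.
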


\begin{proof}
	Suppose that $X$ satisfies \eqref{eq:KKT}. Pre-multiply \eqref{eq:KKT-1} by
	$X^{\T}$ and then solve for $X^{\T}D$ to conclude that $X^{\T}D$ is symmetric. Next, upon using $X^{\T}X=I_k$, we have
	$$
	E(X)X=X\what\Lambda+\frac 1{[\tr(X^{\T}BX)]^{\theta}}XD^{\T}X=X\left(\what\Lambda+\frac {D^{\T}X}{[\tr(X^{\T}BX)]^{\theta}}\right)
	=: X\Lambda,
	$$
	which gives \eqref{eq:KKT-NEPv}. On the other hand, suppose that
	\eqref{eq:KKT-NEPv} holds and $X^{\T}D$ is symmetric. We expand \eqref{eq:KKT-NEPv} and rearrange the terms to get
	\begin{align*}
	\mbox{LHS of  \eqref{eq:KKT-1}}
	&=-\frac 1{[\tr(X^{\T}BX)]^{\theta}}XD^{\T}X+X\Lambda=X\left(\Lambda-\frac {D^{\T}X}{[\tr(X^{\T}BX)]^{\theta}}\right) \\
	&=: X\what\Lambda,
	\end{align*}
	which gives \eqref{eq:KKT-1} and also $\what\Lambda$ is symmetric
	because $\Lambda$ and $D^{\T}X$ are symmetric.
%	\qed
\end{proof}

\begin{remark}\label{rk:KKT}
	{\rm
		Theorem~\ref{thm:KKT} says $\cR(X)$ is a $k$-dimensional eigenspace of $E(X)$ if $X$ is a KKT point.
		In general when $D\ne 0$, we may not be able to make the columns of $X$ to be eigenvectors of $E(X)$ due to the requirement
		that $X^{\T}D$ has also to be symmetric. However, for the case $D=0$, if $X$ is a KKT point, then we can pick an
		orthogonal matrix $Q\in\bbR^{k\times k}$ such that the columns of $XQ$ are the eigenvectors
		of $E(X)=E(XQ)$ and at the same time $E(XQ)XQ=XQ(Q^{\T}\Lambda Q)$, implying $XQ$ is also a KKT point.
	}
\end{remark}

The following lemma gives a crucial estimate for our analysis later in this paper.

\begin{lemma}\label{lm:mono}
	For $X,\wtd X\in\bbO^{n\times k}$, let
	\begin{equation}\nonumber%\label{eq:gamma-2}
	\alpha=\tr ( X^{\T}AX),\,\,
	\delta=\tr ( X^{\T}D),\,\,
	\beta=\tr ( X^{\T} B X),\,\,
	\wtd\beta=\tr (\wtd X^{\T} B\wtd X).
	\end{equation}
	If
	\begin{equation}\label{eq:tr-ineq}
	\tr(\wtd X^{\T} E(X)\wtd X)\ge\tr ( X^{\T} E(X) X),
	\end{equation}
	then
	\begin{equation}\label{eq:obj-ineq-detail}
	f_{\theta}(X)+\gamma\le g_{\theta}(\wtd X)+\frac {\tr(\wtd X^{\T} DX^{\T}\wtd X)}{[\tr(\wtd X^{\T} B\wtd X)]^{\theta}},
	%\le g_{\theta}(\wtd X)+\frac {\|\wtd X^{\T} D\|_{\tr}}{[\tr(\wtd X^{\T} B\wtd X)]^{\theta}},
	\end{equation}
	where
	\begin{equation}\label{eq:gamma}
	\gamma=\frac {\alpha+\delta}{\wtd\beta^{\theta}\beta}
	\Big[(1-\theta)\beta+\theta\wtd\beta-\beta^{1-\theta}\wtd\beta^{\theta}\Big].
	\end{equation}
	Furthermore, if inequality \eqref{eq:tr-ineq} is strict, then so is  inequality \eqref{eq:obj-ineq-detail}.
\end{lemma}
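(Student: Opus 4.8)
The plan is to treat this lemma as an essentially algebraic identity: I would expand both traces appearing in \eqref{eq:tr-ineq} in terms of $\alpha,\delta,\beta,\wtd\beta$ together with $\tr(\wtd X^{\T}A\wtd X)$ and $\tr(\wtd X^{\T}DX^{\T}\wtd X)$ (the latter being the very quantity that already appears in the conclusion), and then observe that \eqref{eq:obj-ineq-detail} is just \eqref{eq:tr-ineq} rewritten, with $\gamma$ absorbing the discrepancy between the denominators $\beta^{\theta}$ and $\wtd\beta^{\theta}$. First I would compute $\tr(X^{\T}E(X)X)$: since $X^{\T}X=I_k$ the symmetrized term $\tfrac12(DX^{\T}+XD^{\T})$ contributes $\tfrac12[\tr(X^{\T}DX^{\T}X)+\tr(X^{\T}XD^{\T}X)]=\delta$, and because $f_1(X)=(\alpha+\delta)/\beta$ this gives $\tr(X^{\T}E(X)X)=\dfrac{2}{\beta^{\theta}}\bigl[(\alpha+\delta)-\theta(\alpha+\delta)\bigr]=\dfrac{2(1-\theta)(\alpha+\delta)}{\beta^{\theta}}$.

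Next I would compute $\tr(\wtd X^{\T}E(X)\wtd X)$. The only step here that is not pure bookkeeping is the symmetrized term: $\wtd X^{\T}XD^{\T}\wtd X$ is the transpose of $\wtd X^{\T}DX^{\T}\wtd X$, so their traces coincide and $\tfrac12(DX^{\T}+XD^{\T})$ contributes exactly $\tr(\wtd X^{\T}DX^{\T}\wtd X)$, precisely the term appearing on the right-hand side of \eqref{eq:obj-ineq-detail}. Using $f_1(X)=(\alpha+\delta)/\beta$ again, this yields $\tr(\wtd X^{\T}E(X)\wtd X)=\dfrac{2}{\beta^{\theta}}\Bigl[\tr(\wtd X^{\T}A\wtd X)+\tr(\wtd X^{\T}DX^{\T}\wtd X)-\theta\tfrac{\alpha+\delta}{\beta}\wtd\beta\Bigr]$.

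Substituting the two expressions into \eqref{eq:tr-ineq}, cancelling the positive factor $2/\beta^{\theta}$, and moving the $\theta$-term to the other side produces $\tr(\wtd X^{\T}A\wtd X)+\tr(\wtd X^{\T}DX^{\T}\wtd X)\ge\dfrac{\alpha+\delta}{\beta}\bigl[(1-\theta)\beta+\theta\wtd\beta\bigr]$. Dividing by $\wtd\beta^{\theta}>0$, the left side becomes $g_{\theta}(\wtd X)+\tr(\wtd X^{\T}DX^{\T}\wtd X)/\wtd\beta^{\theta}$, i.e.\ the right side of \eqref{eq:obj-ineq-detail}; and writing $f_{\theta}(X)=(\alpha+\delta)/\beta^{\theta}=(\alpha+\delta)\beta^{1-\theta}\wtd\beta^{\theta}/(\beta\wtd\beta^{\theta})$ and comparing with \eqref{eq:gamma} shows $\dfrac{\alpha+\delta}{\beta\wtd\beta^{\theta}}\bigl[(1-\theta)\beta+\theta\wtd\beta\bigr]=f_{\theta}(X)+\gamma$, which is exactly \eqref{eq:obj-ineq-detail}. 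Because every manipulation is a multiplication or division by a positive quantity or the addition of equal terms, a strict inequality in \eqref{eq:tr-ineq} propagates to a strict one in \eqref{eq:obj-ineq-detail}, which settles the last sentence.

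I do not expect a genuine obstacle: the content is the computation itself, and the only subtleties are the transpose identity that collapses the $DX^{\T}+XD^{\T}$ term to a single trace, and recognizing that $\gamma$ in \eqref{eq:gamma} is reverse-engineered precisely so that \eqref{eq:obj-ineq-detail} becomes the arithmetic--geometric-mean corrected form of \eqref{eq:tr-ineq}. As a by-product, weighted AM--GM gives $(1-\theta)\beta+\theta\wtd\beta\ge\beta^{1-\theta}\wtd\beta^{\theta}$, hence $\gamma\ge0$ whenever $\alpha+\delta\ge0$ --- the sign information that will make this lemma useful for a monotonicity argument later, though it is not needed for the statement proved here.
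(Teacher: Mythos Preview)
Your proposal is correct and is exactly the direct algebraic verification one would expect: expand both sides of \eqref{eq:tr-ineq} using the definition of $E(X)$, cancel the positive factor $2/\beta^{\theta}$, divide by $\wtd\beta^{\theta}$, and recognize the resulting right-hand side as $f_{\theta}(X)+\gamma$. The paper in fact omits the proof of this lemma entirely, so there is no alternative argument to compare against; your computation is the natural one and each step (including the transpose identity for the symmetrized $D$-term and the preservation of strict inequality) is sound.
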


As the first consequence of Lemma~\ref{lm:mono}, we have the next three theorems. These theorems lay the foundation of
our SCF iteration for NEPv \eqref{eq:KKT-NEPv} in section~\ref{sec:SCF}, which iterates the current approximation $X$ to the next
one $\wtd XQ_{\opt}$, where $Q_{\opt}$ is as specified in Theorem~\ref{thm:opt-subspace}, while
the objective value is increased.

\begin{theorem}\label{thm:mono}
	For $X,\wtd X\in\bbO^{n\times k}$, suppose either $\theta\in\{0,1\}$ or $\tr (X^{\T}AX+X^{\T}D)\ge 0$.
	If \eqref{eq:tr-ineq} holds, then
	\begin{equation}\label{eq:obj-ineq}
	f_{\theta}(X)
	\le g_{\theta}(\wtd X)+\frac {\tr(\wtd X^{\T} DX^{\T}\wtd X)}{[\tr(\wtd X^{\T} B\wtd X)]^{\theta}}
	\le g_{\theta}(\wtd X)+\frac {\|\wtd X^{\T} D\|_{\tr}}{[\tr(\wtd X^{\T} B\wtd X)]^{\theta}}.
	\end{equation}
	Furthermore, if inequality \eqref{eq:tr-ineq} is strict, then  so is the first inequality in \eqref{eq:obj-ineq}.
\end{theorem}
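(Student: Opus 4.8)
The plan is to derive the first inequality of \eqref{eq:obj-ineq} directly from Lemma~\ref{lm:mono} by showing that the scalar $\gamma$ in \eqref{eq:gamma} is nonnegative under the stated hypotheses, and to get the second inequality from von Neumann's trace inequality.

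First I would note that $\beta=\tr(X^{\T}BX)>0$ and $\wtd\beta=\tr(\wtd X^{\T}B\wtd X)>0$, because $\rank(B)>n-k$ makes the denominator of $f_\theta$ positive on $\bbO^{n\times k}$. Hence, in \eqref{eq:gamma}, the prefactor $(\wtd\beta^{\theta}\beta)^{-1}$ is positive, so the sign of $\gamma$ equals that of $(\alpha+\delta)\bigl[(1-\theta)\beta+\theta\wtd\beta-\beta^{1-\theta}\wtd\beta^{\theta}\bigr]$. The bracketed quantity is precisely the gap in the weighted arithmetic--geometric mean inequality, with weights $1-\theta$ and $\theta$, applied to the positive numbers $\beta$ and $\wtd\beta$; since $0\le\theta\le1$ this gap is always $\ge0$, and it vanishes exactly when $\theta\in\{0,1\}$ (or $\beta=\wtd\beta$). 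Consequently: if $\theta\in\{0,1\}$ then $\gamma=0$ irrespective of the sign of $\alpha+\delta$; otherwise the hypothesis $\tr(X^{\T}AX+X^{\T}D)=\alpha+\delta\ge0$ forces $\gamma\ge0$. In all cases $\gamma\ge0$, so $f_\theta(X)\le f_\theta(X)+\gamma$, and \eqref{eq:obj-ineq-detail} yields the first inequality of \eqref{eq:obj-ineq}. When \eqref{eq:tr-ineq} is strict, Lemma~\ref{lm:mono} makes \eqref{eq:obj-ineq-detail} strict, and since $\gamma\ge0$ the first inequality of \eqref{eq:obj-ineq} is then strict as well.

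For the second inequality I would use cyclic invariance of the trace to rewrite $\tr(\wtd X^{\T}DX^{\T}\wtd X)=\tr\bigl((X^{\T}\wtd X)(\wtd X^{\T}D)\bigr)$, a product of two $k\times k$ matrices, and then apply von Neumann's trace inequality: $\tr\bigl((X^{\T}\wtd X)(\wtd X^{\T}D)\bigr)\le\sum_i\sigma_i(X^{\T}\wtd X)\,\sigma_i(\wtd X^{\T}D)$. Since $X$ and $\wtd X$ have orthonormal columns, $\|X^{\T}\wtd X\|_2\le1$, so every $\sigma_i(X^{\T}\wtd X)\le1$, and the right-hand side is bounded by $\sum_i\sigma_i(\wtd X^{\T}D)=\|\wtd X^{\T}D\|_{\tr}$. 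Dividing by $[\tr(\wtd X^{\T}B\wtd X)]^{\theta}>0$ gives the second inequality.

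The two trace identities and the final division are routine; the only point that needs a little care is the case analysis on $\theta$ establishing $\gamma\ge0$ without invoking $\alpha+\delta\ge0$ when $\theta\in\{0,1\}$, i.e.\ recognizing the bracket in \eqref{eq:gamma} as the (always nonnegative) weighted AM--GM gap and that it vanishes exactly at the endpoints $\theta=0$ and $\theta=1$. I do not expect a genuine obstacle, since the substantive work has already been carried out in Lemma~\ref{lm:mono}.
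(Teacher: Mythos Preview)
Your proposal is correct and follows essentially the same route as the paper's own proof: both obtain the first inequality by invoking Lemma~\ref{lm:mono} and arguing that $\gamma\ge 0$ (with $\gamma\equiv 0$ when $\theta\in\{0,1\}$), and both obtain the second inequality via von Neumann's trace inequality combined with $\sigma_i(X^{\T}\wtd X)\le 1$. Your write-up simply makes explicit the AM--GM reasoning behind the nonnegativity of the bracket in \eqref{eq:gamma}, which the paper leaves implicit.
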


\begin{proof}
	In Lemma~\ref{lm:mono}, we note $\gamma\equiv 0$
	in the case $\theta\in\{0,1\}$, and $\gamma\ge 0$ in the case $\alpha+\delta=\tr (X^{\T}AX+X^{\T}D)\ge 0$.
	Hence the first inequality in \eqref{eq:obj-ineq} holds.
	To prove the second inequality in \eqref{eq:obj-ineq},
	we note, by
	von Neumann's trace inequality \cite{neum:1937} (see also \cite[p.182]{hojo:1991}, \cite[6.81]{sebe:2007}), that
	$$
	\tr(\wtd X^{\T} DX^{\T}\wtd X)
	\le \sum_{i=1}^k\sigma_i(\wtd X^{\T} D)\sigma_i(X^{\T}\wtd X)\le \sum_{i=1}^k\sigma_i(\wtd X^{\T} D)
	=\|\wtd X^{\T} D\|_{\tr},
	$$
	yielding the second inequality in \eqref{eq:obj-ineq}. 
\end{proof}

\begin{theorem}\label{thm:opt-subspace}
	Given $\wtd X\in\bbO^{n\times k}$, we have
	\begin{equation}\nonumber%\label{eq:opt-subspace}
	\max_{Q\in\bbO^{k\times k}} f_{\theta}(\wtd XQ)=g_{\theta}(\wtd X)+\frac {\|\wtd X^{\T} D\|_{\tr}}{[\tr(\wtd X^{\T} B\wtd X)]^{\theta}},
	\end{equation}
	and $Q_{\opt}=UV^{\T}$ is a global maximizer, where $U,\,V\in\bbO^{k\times k}$ are from
	the SVD $\wtd X^{\T}D=U\Sigma V$ \cite{govl:2013}.
\end{theorem}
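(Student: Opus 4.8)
The plan is to reduce the optimization over the orthogonal group to a classical orthogonal Procrustes problem. First I would observe that by the definition of $g_\theta$ and because $\wtd X^\T\wtd X = I_k$, replacing $\wtd X$ by $\wtd XQ$ with $Q\in\bbO^{k\times k}$ leaves $\tr((\wtd XQ)^\T A(\wtd XQ)) = \tr(Q^\T\wtd X^\T A\wtd XQ) = \tr(\wtd X^\T A\wtd X)$ unchanged, and likewise $\tr((\wtd XQ)^\T B(\wtd XQ)) = \tr(\wtd X^\T B\wtd X)$; hence $g_\theta(\wtd XQ) = g_\theta(\wtd X)$ and the denominator $[\tr((\wtd XQ)^\T B(\wtd XQ))]^\theta$ is independent of $Q$. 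Therefore
\begin{equation}\nonumber
f_\theta(\wtd XQ) = g_\theta(\wtd X) + \frac{\tr(Q^\T\wtd X^\T D)}{[\tr(\wtd X^\T B\wtd X)]^\theta},
\end{equation}
and maximizing over $Q$ amounts to maximizing the single linear functional $Q\mapsto\tr(Q^\T\wtd X^\T D)$ over $\bbO^{k\times k}$.

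Next I would invoke the standard solution of the orthogonal Procrustes problem: for any $M\in\bbR^{k\times k}$ with SVD $M = U\Sigma V^\T$ (here $M = \wtd X^\T D$, and I would be careful that the excerpt writes the SVD as $\wtd X^\T D = U\Sigma V$, so in my notation $V^\T$ there equals their $V$), one has $\max_{Q\in\bbO^{k\times k}}\tr(Q^\T M) = \tr\Sigma = \sum_{i=1}^k\sigma_i(M) = \|M\|_{\tr}$, with the maximum attained at $Q = UV^\T$. The short argument is $\tr(Q^\T M) = \tr(Q^\T U\Sigma V^\T) = \tr(V^\T Q^\T U\,\Sigma) = \sum_{i=1}^k (W)_{ii}\,\sigma_i$ where $W := V^\T Q^\T U$ is orthogonal, and since an orthogonal matrix has all entries in $[-1,1]$ (each row has unit $2$-norm), $\sum_i (W)_{ii}\sigma_i \le \sum_i\sigma_i$ because every $\sigma_i\ge 0$; equality holds when $W = I_k$, i.e. $Q^\T = VU^\T$, i.e. $Q_{\opt} = UV^\T$. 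Substituting back gives
\begin{equation}\nonumber
\max_{Q\in\bbO^{k\times k}} f_\theta(\wtd XQ) = g_\theta(\wtd X) + \frac{\|\wtd X^\T D\|_{\tr}}{[\tr(\wtd X^\T B\wtd X)]^\theta},
\end{equation}
which is exactly the claimed identity.

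There is essentially no hard obstacle here; the statement is a direct corollary of the orthogonality-invariance of the Rayleigh-quotient-type terms plus the Procrustes lemma, and in fact it makes Theorem~\ref{thm:mono}'s second inequality tight. The only points requiring a little care are bookkeeping ones: matching the SVD convention used in the statement (whether the third factor is $V$ or $V^\T$), confirming the denominator is genuinely positive (guaranteed since $\rank(B) > n-k$ forces $\tr(\wtd X^\T B\wtd X) > 0$ for every $\wtd X\in\bbO^{n\times k}$, as noted after \eqref{eq:main-f}), and noting that the maximizer need not be unique when $\wtd X^\T D$ is singular or has repeated singular values — the statement only claims $Q_{\opt} = UV^\T$ is \emph{a} global maximizer, so this causes no trouble. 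I would also remark, as a sanity check linking back to the SCF design, that combining this theorem with Theorem~\ref{thm:mono} shows the update $X\mapsto\wtd XQ_{\opt}$ increases $f_\theta$ whenever $\wtd X$ satisfies the trace inequality \eqref{eq:tr-ineq} relative to $X$.
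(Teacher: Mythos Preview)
Your proposal is correct and follows essentially the same route as the paper: reduce to $\max_{Q\in\bbO^{k\times k}}\tr(Q^{\T}\wtd X^{\T}D)$ by noting the $Q$-invariance of the quadratic traces, then identify this maximum as $\|\wtd X^{\T}D\|_{\tr}$ attained at $Q_{\opt}=UV^{\T}$. The only cosmetic difference is that the paper invokes von~Neumann's trace inequality for the bound $\tr(Q^{\T}\wtd X^{\T}D)\le\|\wtd X^{\T}D\|_{\tr}$, whereas you supply the equivalent elementary argument via the diagonal entries of the orthogonal matrix $W=V^{\T}Q^{\T}U$.
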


\begin{proof}
	Since $\tr([\wtd XQ]^{\T}H[\wtd XQ])\equiv \tr(\wtd X^{\T} H\wtd X)$ for $H\in\{A,B\}$, we have
	$$
	\max_{Q\in\bbO^{k\times k}} f_{\theta}(\wtd XQ)=g_{\theta}(\wtd X)+\frac {\max_{Q\in\bbO^{k\times k}}\tr([\wtd XQ]^{\T}D)}{[\tr(\wtd X^{\T} B\wtd X)]^{\theta}}.
	$$
	By von Neumann's trace inequality \cite{neum:1937}, we have $\tr([\wtd XQ]^{\T}D)\le\|\wtd X^{\T} D\|_{\tr}$ for any $Q\in\bbO^{k\times k}$
	and the equality is attained for $Q=UV$ because \linebreak $\tr([\wtd XUV^{\T}]^{\T}D)=\tr(V^{\T}\Sigma V)=\|\wtd X^{\T} D\|_{\tr}$. 
\end{proof}

\begin{remark}\label{rk:mono}
	Throughout this paper, we limit $\theta\in[0,1]$. We remark here about some of the implications for $\theta$ outside
	this range. Right before we stated Theorem~\ref{thm:mono}, we emphasized the importance of Theorems~\ref{thm:mono} and
	\ref{thm:opt-subspace} to our SCF algorithm in producing the next
	approximation from the current one $X$, while
	the objective value of \eqref{eq:main-op} is increased. The key is to make $\gamma\ge 0$ in \eqref{eq:gamma}. For that,
	we require the condition of Theorem~\ref{thm:mono}: $\theta\in\{0,1\}$ or $\tr (X^{\T}AX+X^{\T}D)\ge 0$. Now if
	$\theta<0$ or $\theta>1$, then $(1-\theta)\beta+\theta\wtd\beta-\beta^{1-\theta}\wtd\beta^{\theta}<0$ unless $\beta=\wtd\beta$. Hence
	to still have $\gamma\ge 0$, we will need to assume $\tr (X^{\T}AX+X^{\T}D)=\alpha+\delta\le 0$. This observation is actually quite interesting.
	In fact, if $\tr (X^{\T}AX+X^{\T}D)\le 0$ for any $X\in\bbO^{n\times k}$, the rest of the developments in this paper are still valid
	with minor changes, namely removing the conditions imposed on initial guess in Algorithms~\ref{alg:SCF} and \ref{alg:OMA-theta}.
\end{remark}

\begin{lemma}[{\cite[Lemma 3]{zhwb:2020}}]\label{lm:maxtrace}
	Let $H\in\bbR^{k\times k}$. Then
	$
	|\tr(H)|\le\sum_{i=1}^k\sigma_i(H).
	$
	If
	$
	|\tr(H)|=\sum_{i=1}^k\sigma_i(H),
	$
	then
	$H$ is symmetric and is either positive semidefinite when $\tr(H)\ge 0$, or negative semi-definite when $\tr(H)\le 0$.
\end{lemma}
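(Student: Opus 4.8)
The plan is to work directly from a singular value decomposition $H=U\Sigma V^{\T}$ with $U,V\in\bbO^{k\times k}$ and $\Sigma=\diag(\sigma_1(H),\dots,\sigma_k(H))$. Setting $W=V^{\T}U\in\bbO^{k\times k}$, one computes $\tr(H)=\tr(\Sigma W)=\sum_{i=1}^{k}\sigma_i(H)\,w_{ii}$. Since each column of the orthogonal matrix $W$ is a unit vector, $|w_{ii}|\le 1$, and hence
$$
|\tr(H)|\le\sum_{i=1}^{k}\sigma_i(H)\,|w_{ii}|\le\sum_{i=1}^{k}\sigma_i(H),
$$
which is the first assertion. (This bound also drops out of von Neumann's trace inequality applied to $\tr(H\,I_k)$ and $\tr((-H)\,I_k)$, which is already in use above, but the SVD route makes the equality analysis transparent.)

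For the equality case I would first dispose of the degenerate sub-case: if $\tr(H)=0$ and $|\tr(H)|=\sum_i\sigma_i(H)$, then $\sum_i\sigma_i(H)=0$, so $H=0$ and the conclusion holds trivially. Otherwise, after replacing $H$ by $-H$ if necessary — which leaves the $\sigma_i(H)$ unchanged and merely flips the sign of the trace and of the candidate semidefinite matrix — I may assume $\tr(H)>0$. Let $r=\rank(H)$, so $\sigma_1(H)\ge\cdots\ge\sigma_r(H)>0=\sigma_{r+1}(H)=\cdots=\sigma_k(H)$. Then the equality reads $\tr(H)=\sum_{i=1}^{r}\sigma_i(H)\,w_{ii}=\sum_{i=1}^{r}\sigma_i(H)$, and since $w_{ii}\le 1$ and $\sigma_i(H)>0$ for $i\le r$, this forces $w_{ii}=1$ for $i=1,\dots,r$.

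The structural step is the observation that a diagonal entry of an orthogonal matrix equal to $1$ forces the remaining entries of that row and column to vanish (its column is a unit vector whose $i$-th coordinate is already $1$, and likewise its row). Hence $W=\diag(I_r,W')$ with $W'\in\bbO^{(k-r)\times(k-r)}$, so $W\Sigma=\Sigma$ because $W'$ multiplies only the zero block of $\Sigma$. Therefore
$$
H=U\Sigma V^{\T}=VW\Sigma V^{\T}=V\Sigma V^{\T},
$$
which is symmetric and positive semidefinite; undoing the sign reduction yields symmetric negative semidefiniteness when $\tr(H)\le 0$.

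The only place that needs genuine care is the equality analysis: one must separate out the zero singular values so that $w_{ii}=1$ is claimed only for indices $i\le r$, and then justify the block-diagonal form of $W$ from those $\pm 1$ diagonal entries. The inequality itself and the final reassembly $H=V\Sigma V^{\T}$ are one-line SVD manipulations.
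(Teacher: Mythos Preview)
Your argument is correct. The SVD manipulation $\tr(H)=\tr(\Sigma W)$ with $W=V^{\T}U$ gives the inequality immediately, and your equality analysis is clean: forcing $w_{ii}=1$ only for $i\le r$, then using the unit-column/row constraint to get $W=\diag(I_r,W')$, and finally $H=VW\Sigma V^{\T}=V\Sigma V^{\T}$.

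As for comparison with the paper: there is nothing to compare. The paper does not prove this lemma at all; it simply quotes it as \cite[Lemma 3]{zhwb:2020} and moves on. So your write-up is strictly more than what the paper supplies, and would serve as a self-contained replacement for the bare citation.
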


The next theorem presents necessary conditions for local or global maximizers of \eqref{eq:main-op}.

\begin{theorem}\label{thm:global-necessary}
	Let $X_{\opt}\in\bbO^{n\times k}$ be a local or global maximizer of \eqref{eq:main-op}.
	%The following statements hold.
	\begin{enumerate}[{\rm (a)}]
		\item If $X_{\opt}\in\bbO^{n\times k}$ is a global maximizer, then $X_{\opt}^{\T}D\succeq 0$;
		\item If $X_{\opt}^{\T}D\succeq 0$ and if $\tr(X_{\opt}^{\T}AX_{\opt}+X_{\opt}^{\T}D)\ge 0$, then $X_{\opt}$ is an orthonormal basis matrix
		of the invariant subspace associated with the $k$ largest eigenvalues of $E(X_{\opt})$.
	\end{enumerate}
\end{theorem}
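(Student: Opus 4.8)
\emph{Part (a)} is routine once Theorem~\ref{thm:opt-subspace} is available: since $X_{\opt}$ is a global maximizer and $X_{\opt}Q\in\bbO^{n\times k}$ for every $Q\in\bbO^{k\times k}$, we get $f_{\theta}(X_{\opt})=\max_{Q\in\bbO^{k\times k}}f_{\theta}(X_{\opt}Q)=g_{\theta}(X_{\opt})+\|X_{\opt}^{\T}D\|_{\tr}\big/[\tr(X_{\opt}^{\T}BX_{\opt})]^{\theta}$. Cancelling $g_{\theta}(X_{\opt})$ and the positive denominator yields $\tr(X_{\opt}^{\T}D)=\|X_{\opt}^{\T}D\|_{\tr}=\sum_i\sigma_i(X_{\opt}^{\T}D)\ge0$, so $|\tr(X_{\opt}^{\T}D)|=\sum_i\sigma_i(X_{\opt}^{\T}D)$, and Lemma~\ref{lm:maxtrace} forces $X_{\opt}^{\T}D$ to be symmetric and positive semidefinite.

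For \emph{part (b)} the plan is a second-order argument along a rotation curve. A local (hence also a global) maximizer is a KKT point, so Theorem~\ref{thm:KKT} makes $\cR(X_{\opt})$ an $E(X_{\opt})$-invariant subspace; as $E(X_{\opt})$ is symmetric, both $\cR(X_{\opt})$ and $\cR(X_{\opt})^{\perp}$ carry orthonormal eigenbases of $E(X_{\opt})$. Suppose $\cR(X_{\opt})$ is \emph{not} the invariant subspace of the $k$ largest eigenvalues. Then the eigenvalues of $E(X_{\opt})$ on $\cR(X_{\opt})$ do not all dominate those on $\cR(X_{\opt})^{\perp}$, so the smallest eigenvalue $\mu$ of $E(X_{\opt})|_{\cR(X_{\opt})}$ is strictly below the largest eigenvalue $\nu$ of $E(X_{\opt})|_{\cR(X_{\opt})^{\perp}}$; pick unit eigenvectors $\bu\in\cR(X_{\opt})$, $\bv\in\cR(X_{\opt})^{\perp}$ with $E(X_{\opt})\bu=\mu\bu$, $E(X_{\opt})\bv=\nu\bv$, set $K=\bv\bu^{\T}-\bu\bv^{\T}$, and take the curve $Y(t)=e^{tK}X_{\opt}$ in $\bbO^{n\times k}$. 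Since $\bu\in\cR(X_{\opt})$ and $X_{\opt}^{\T}\bv=0$, this unwinds to $Y(t)=X_{\opt}+t\,\bv\bc^{\T}-\tfrac{t^{2}}{2}\bu\bc^{\T}+O(t^{3})$ with $\bc:=X_{\opt}^{\T}\bu$, $\|\bc\|_{2}=1$.

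Next I would expand $\phi(t):=f_{\theta}(Y(t))=N(t)/M(t)^{\theta}$, where $N(t)=\tr(Y(t)^{\T}AY(t)+Y(t)^{\T}D)$ and $M(t)=\tr(Y(t)^{\T}BY(t))$, to second order. As $X_{\opt}$ is a critical point, $\phi'(0)=0$, and the quotient rule together with this identity reduces the second derivative to
\[
\phi''(0)=\frac{1}{\beta^{\theta}}\Big(N''(0)-\theta f_{1}(X_{\opt})M''(0)\Big)+\theta(1-\theta)\,\frac{(\alpha+\delta)\,[M'(0)]^{2}}{\beta^{\theta+2}},
\]
with $\beta=\tr(X_{\opt}^{\T}BX_{\opt})$ and $\alpha+\delta=\tr(X_{\opt}^{\T}AX_{\opt}+X_{\opt}^{\T}D)$. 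The last term is $\ge0$ because $\theta\in[0,1]$ and, by hypothesis, $\alpha+\delta\ge0$. For the first term I would substitute $A-\theta f_{1}(X_{\opt})B=\tfrac{\beta^{\theta}}{2}E(X_{\opt})-\tfrac12\big(DX_{\opt}^{\T}+X_{\opt}D^{\T}\big)$ from \eqref{eq:E(X)} into the $O(t^{2})$ expansions of $N$ and $M$; the relations $X_{\opt}^{\T}\bv=0$, $E(X_{\opt})\bu=\mu\bu$, $E(X_{\opt})\bv=\nu\bv$ and $X_{\opt}X_{\opt}^{\T}\bu=\bu$ make the cross terms cancel and leave
\[
N''(0)-\theta f_{1}(X_{\opt})M''(0)=\beta^{\theta}(\nu-\mu)+\bc^{\T}(X_{\opt}^{\T}D)\bc .
\]
Here $\nu-\mu>0$ by construction and $\bc^{\T}(X_{\opt}^{\T}D)\bc\ge0$ since $X_{\opt}^{\T}D\succeq0$, so $\phi''(0)>0$. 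Combined with $\phi'(0)=0$, this shows $\phi$ has no local maximum at $t=0$, contradicting the local maximality of $X_{\opt}$; hence $\cR(X_{\opt})$ is the invariant subspace of the $k$ largest eigenvalues of $E(X_{\opt})$, of which $X_{\opt}\in\bbO^{n\times k}$ is an orthonormal basis matrix.

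The main obstacle is exactly this second-order computation: one must carry $\phi$ to $O(t^{2})$, use the first-order identity to eliminate the mixed $N'M'$ contribution, and spot that the $E(X_{\opt})$-substitution plus the eigenvector/orthogonality relations collapse everything to $\beta^{\theta}(\nu-\mu)+\bc^{\T}(X_{\opt}^{\T}D)\bc$, which is where the two hypotheses of (b) finally enter. A \emph{global} maximizer could instead be handled without the Hessian---rotating $\cR(X_{\opt})$ toward the top-$k$ eigenspace of $E(X_{\opt})$ makes \eqref{eq:tr-ineq} strict, and Theorems~\ref{thm:mono}--\ref{thm:opt-subspace} then exhibit a strictly better $\wtd XQ$---but this shortcut does not obviously stay in a neighborhood of $X_{\opt}$ when $X_{\opt}^{\T}D$ is singular, so the second-order argument appears necessary for local maximizers.
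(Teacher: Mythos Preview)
Your argument for part~(a) is correct and is essentially the paper's route: combine Theorem~\ref{thm:opt-subspace} with Lemma~\ref{lm:maxtrace}.

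Your second-order argument for part~(b) is also correct; the expansion of $\phi''(0)$ is right and the two hypotheses enter exactly where you indicate. The paper, however, frames Theorem~\ref{thm:global-necessary} as a consequence of Lemma~\ref{lm:mono} (see the sentence preceding Theorem~\ref{thm:mono}), i.e.\ the ``shortcut'' you sketch at the end and set aside for local maximizers. Your worry---that passing to $\wtd XQ_{\opt}$ may leave the neighborhood of $X_{\opt}$ when $X_{\opt}^{\T}D$ is singular---disappears if one applies \eqref{eq:obj-ineq-detail} directly rather than the weakened \eqref{eq:obj-ineq}. Along your own curve $\wtd X=Y(t)$, the strict form of \eqref{eq:obj-ineq-detail} produces a gap of order $t^{2}(\nu-\mu)$, while
\[
\tr(Y(t)^{\T}D)-\tr\big(Y(t)^{\T}DX_{\opt}^{\T}Y(t)\big)=\tfrac{t^{2}}{2}\,\bc^{\T}(X_{\opt}^{\T}D)\bc+O(t^{3})\ge O(t^{3})
\]
thanks to $X_{\opt}^{\T}D\succeq 0$; together with $\gamma\ge 0$ (from $\alpha+\delta\ge 0$) this yields $f_{\theta}(Y(t))>f_{\theta}(X_{\opt})$ for small $t\ne 0$, with no post-multiplication by $Q$ needed. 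So the two arguments are really the same computation packaged differently: yours extracts the Hessian explicitly, while the paper's route hides that second-order information inside Lemma~\ref{lm:mono}. Your version is more self-contained; the paper's reuses machinery already in place and makes the parallel with Theorem~\ref{thm:global-suff?} transparent.
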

  
Theorem~\ref{thm:global-necessary} presents necessary conditions for a local/global maximizer. Unfortunately,
it is not clear if they are sufficient, except in the case $D=0$ and $\theta\in\{0,1\}$ for which item~(a)
is trivially true and item~(b) is sufficient.
In fact, the case for $D=0$ and $\theta=0$ corresponds to the standard symmetric eigenvalue problem, and
the case for $D=0$ and $\theta=1$ corresponds to LDA \eqref{eq:OLDA} \cite{zhln:2010,zhln:2013}.
The next theorem sheds lights on why Theorem~\ref{thm:global-necessary} gives necessary conditions for $D\ne 0$ or
$\theta\in\{0,1\}$ but not sufficient ones.

\begin{theorem}\label{thm:global-suff?}
	If $X\in\bbO^{n\times k}$ is an orthonormal basis matrix
	of the invariant subspace associated with the $k$ largest eigenvalues of $E(X)$, then
	for any $\wtd X\in\bbO^{n\times k}$ we have
	\begin{equation}\label{eq:global-suff?}
	g_{\theta}(\wtd X)+\frac {\tr(\wtd X^{\T} DX^{\T}\wtd X)}{[\tr(\wtd X^{\T} B\wtd X)]^{\theta}}
	\le f_{\theta}(X)+\gamma,
	\end{equation}
	where $\gamma$ is defined as in \eqref{eq:gamma}. In particular, for $D=0$ and $\theta\in\{0,1\}$ we have
	$f_{\theta}(\wtd X)\le f_{\theta}(X)$, implying that $X$ is a global maximizer.
\end{theorem}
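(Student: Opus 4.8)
The plan is to mirror the structure of Lemma~\ref{lm:mono}, but with the roles of the two Stiefel points exchanged, exploiting the fact that now it is $X$ (not $\wtd X$) whose column space is the dominant eigenspace of the relevant matrix-valued function evaluated at the correct argument. First I would record the trace inequality that the hypothesis hands us for free: if $X$ spans the invariant subspace of $E(X)$ associated with its $k$ largest eigenvalues, then by the Ky Fan maximum principle
\[
\tr(X^{\T}E(X)X)=\sum_{i=1}^{k}\lambda_i(E(X))\ge\tr(\wtd X^{\T}E(X)\wtd X)
\]
for every $\wtd X\in\bbO^{n\times k}$. This is exactly the reverse of \eqref{eq:tr-ineq}, so the idea is to run the proof of Lemma~\ref{lm:mono} "backwards" — i.e.\ swap $X\leftrightarrow\wtd X$ in that argument wherever the direction of the inequality is used, keeping in mind that $E(\cdot)$ is still evaluated at $X$ in both traces above, so the definition \eqref{eq:E(X)} of $E(X)$ is what gets expanded.

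Next I would expand $\tr(X^{\T}E(X)X)\ge\tr(\wtd X^{\T}E(X)\wtd X)$ using \eqref{eq:E(X)}. Writing $\beta=\tr(X^{\T}BX)$, $\wtd\beta=\tr(\wtd X^{\T}B\wtd X)$, $\alpha=\tr(X^{\T}AX)$, $\delta=\tr(X^{\T}D)$, the left side is $\tfrac{2}{\beta^{\theta}}\bigl[\alpha+\delta-\theta f_1(X)\beta\bigr]=\tfrac{2}{\beta^{\theta}}\bigl[(1-\theta)(\alpha+\delta)\bigr]$ after using $f_1(X)=(\alpha+\delta)/\beta$; the right side expands to $\tfrac{2}{\beta^{\theta}}\bigl[\tr(\wtd X^{\T}A\wtd X)+\tr(\wtd X^{\T}DX^{\T}\wtd X)-\theta f_1(X)\wtd\beta\bigr]$, where the cross term comes from $\tr\bigl(\wtd X^{\T}\tfrac{DX^{\T}+XD^{\T}}{2}\wtd X\bigr)=\tr(\wtd X^{\T}DX^{\T}\wtd X)$ once $X^{\T}D$ symmetry (or rather, just the identity $\tr(\wtd X^{\T}XD^{\T}\wtd X)=\tr(\wtd X^{\T}DX^{\T}\wtd X)$ for traces) is invoked. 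Rearranging and dividing through by $\wtd\beta^{\theta}$ (note $\beta^{\theta}$ versus $\wtd\beta^{\theta}$ is where the $\gamma$ correction is born), I would arrive at
\[
g_{\theta}(\wtd X)+\frac{\tr(\wtd X^{\T}DX^{\T}\wtd X)}{\wtd\beta^{\theta}}\le f_{\theta}(X)\cdot(\text{a factor})+(\text{leftover terms}),
\]
and then collect the leftover into precisely the $\gamma$ of \eqref{eq:gamma}. The bookkeeping here is the same power-mean / weighted-AM--GM manipulation that produces \eqref{eq:gamma} in Lemma~\ref{lm:mono}: the quantity $(1-\theta)\beta+\theta\wtd\beta-\beta^{1-\theta}\wtd\beta^{\theta}$ appears from comparing $\beta^{1-\theta}$ against $\wtd\beta^{1-\theta}$ scaled appropriately, and I expect this to be a near-verbatim copy of the corresponding lines in the (omitted) proof of Lemma~\ref{lm:mono}, just with $X$ and $\wtd X$ interchanged.

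For the last sentence, I would specialize: when $D=0$ the cross term $\tr(\wtd X^{\T}DX^{\T}\wtd X)$ vanishes and $g_{\theta}\equiv f_{\theta}$, so \eqref{eq:global-suff?} reads $f_{\theta}(\wtd X)\le f_{\theta}(X)+\gamma$; and when $\theta\in\{0,1\}$ the bracket $(1-\theta)\beta+\theta\wtd\beta-\beta^{1-\theta}\wtd\beta^{\theta}$ is identically zero (check $\theta=0$: $\beta-\beta=0$; $\theta=1$: $\wtd\beta-\wtd\beta=0$), hence $\gamma=0$ and $f_{\theta}(\wtd X)\le f_{\theta}(X)$ for all $\wtd X$, which is the definition of a global maximizer. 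The main obstacle I anticipate is not conceptual but algebraic hygiene: making sure the $\beta^{\theta}$-versus-$\wtd\beta^{\theta}$ normalization is tracked correctly so that the residual lands exactly as $\gamma$ in \eqref{eq:gamma} and not off by a sign or a factor — in particular confirming that the hypothesis $\tr(X^{\T}E(X)X)\ge\tr(\wtd X^{\T}E(X)\wtd X)$ (an inequality in the "wrong" direction compared to Lemma~\ref{lm:mono}) combines with the sign of the AM--GM defect to yield an upper bound on $g_{\theta}(\wtd X)+\cdots$ rather than a lower one. I would double-check this by testing the two endpoint cases $\theta\in\{0,1\}$ before trusting the general formula.
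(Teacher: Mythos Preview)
Your proposal is correct and matches the paper's own argument: the paper's proof simply states that ``some minor modifications of the proof of Lemma~\ref{lm:mono} yield a proof of \eqref{eq:global-suff?},'' then specializes to $\theta\in\{0,1\}$ and $D=0$ exactly as you do. Your added detail---invoking the Ky Fan principle to obtain the reversed trace inequality, expanding $E(X)$, and collecting the $\beta^{\theta}$-versus-$\wtd\beta^{\theta}$ mismatch into the $\gamma$ of \eqref{eq:gamma}---is precisely the ``minor modification'' the paper leaves implicit.
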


\begin{proof}
	Some minor modifications of the proof of Lemma~\ref{lm:mono} yield a proof of \eqref{eq:global-suff?}.
	If $\theta\in\{0,1\}$, then $\gamma\equiv 0$. If also $D=0$, then $f_{\theta}(\wtd X)\ge f_{\theta}(X)$.
\end{proof}

There are two reasons why Theorem~\ref{thm:global-necessary}(b) may not be sufficient, as can be seen from
inequality \eqref{eq:global-suff?}. First, $\gamma\ge 0$, and second,
$\tr(\wtd X^{\T} DX^{\T}\wtd X)\le\|\wtd X^{\T} D\|_{\tr}$. Thus
in general \eqref{eq:global-suff?} doesn't yield $f_{\theta}(\wtd X)\le f_{\theta}(X)$.

\section{The Role of $D$}\label{sec:RoleD}
When $D=0$, $f_{\theta}(XQ)\equiv f_{\theta}(X)$ for any $X\in\bbO^{n\times k}$ and $Q\in\bbO^{k\times k}$, as
in the LDA case for which $\theta=1$ as well. In such a case, $f_{\theta}$ is actually a function on the
Grassmann manifold $\scrG_k(\bbR^n)$, the collection of all $k$-dimensional subspaces in $\bbR^n$. Any
global maximizer $X_{\opt}$ of \eqref{eq:main-op} is a representative of a class
$\{X_{\opt}Q\,:\, Q\in\bbO^{k\times k}\}$ of maximizers. As a result,
maximizers are not unique. Fortunately, often any maximizer is just as good as another in practice such as
in LDA.

In general if $D\ne 0$, then $f_{\theta}(XQ)\not\equiv f_{\theta}(X)$. The global maximizers of \eqref{eq:main-op} cannot be characterized
as simple as we just did for the case $D=0$. Our goal in this section is to characterize the maximizers
of \eqref{eq:main-op} for a general $D$. In particular,  our main result imply that if $X_{\opt}$ is a global
maximizer and if $\rank(X_{\opt}^{\T}D)=k$, then $X_{\opt}$ is the unique maximizer within  $\{X_{\opt}Q\,:\, Q\in\bbO^{k\times k}\}$
in the sense that
$$
f_{\theta}(X)<f_{\theta}(X_{\opt})\,\,
\mbox{for any $X\in\{X_{\opt}Q\,:\, Q\in\bbO^{k\times k}\}$, $X\ne X_{\opt}$}.
$$

To achieve our goal, we will investigate, for a given $\cX_*\in\scrG_k(\bbR^n)$,
\begin{equation}\label{eq:op-cX*}
\max_{X\in\bbO^{n\times k},\,\cR(X)=\cX_*}\,f_{\theta}(X).
\end{equation}

\begin{lemma}\label{lm:rankOfXD}
	Given $\cX\in\scrG_k(\bbR^n)$,  the singular values of $X^{\T}D$ are independent of the choice of $X\in\bbO^{n\times k}$
	subject to $\cR(X)=\cX$, and as a result, $\rank(X^{\T}D)$ is a constant.
\end{lemma}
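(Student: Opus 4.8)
The plan is to exploit the fact that any two orthonormal basis matrices of the same subspace $\cX$ differ by an orthogonal factor on the right, and that right-multiplication by an orthogonal matrix is a congruence/unitary transformation that preserves singular values. Concretely, suppose $X, \wtd X \in \bbO^{n\times k}$ both satisfy $\cR(X) = \cR(\wtd X) = \cX$. Then there exists $Q \in \bbO^{k\times k}$ with $\wtd X = XQ$: indeed, since the columns of both matrices span the same $k$-dimensional space, $\wtd X = XM$ for some $M \in \bbR^{k\times k}$, and $M$ is orthogonal because $I_k = \wtd X^{\T}\wtd X = M^{\T}X^{\T}XM = M^{\T}M$.

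Given this, the key computation is short: $\wtd X^{\T}D = (XQ)^{\T}D = Q^{\T}(X^{\T}D)$. Since $Q^{\T} \in \bbO^{k\times k}$, the matrices $\wtd X^{\T}D$ and $X^{\T}D$ have the same singular values — left-multiplication by an orthogonal matrix leaves the singular value decomposition's $\Sigma$ factor unchanged (if $X^{\T}D = U\Sigma V^{\T}$ is an SVD, then $Q^{\T}X^{\T}D = (Q^{\T}U)\Sigma V^{\T}$ is an SVD of $\wtd X^{\T}D$, as $Q^{\T}U \in \bbO^{k\times k}$). Therefore $\sigma_i(\wtd X^{\T}D) = \sigma_i(X^{\T}D)$ for all $i$, and in particular $\rank(\wtd X^{\T}D) = \rank(X^{\T}D)$, since the rank equals the number of nonzero singular values. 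This establishes that both quantities depend only on $\cX$ and not on the chosen basis matrix.

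There is essentially no obstacle here; the statement is elementary linear algebra. The only point requiring the slightest care is the justification that the change-of-basis matrix $M$ is orthogonal, which I would state explicitly as above. One could alternatively phrase the whole argument intrinsically: $X^{\T}D$ represents, up to a basis choice, the linear map $\cX \to \bbR^k$ induced by orthogonal projection composed with $D^{\T}$, but the concrete $\wtd X = XQ$ argument is cleaner and directly matches how the paper uses these quantities (e.g.\ the SVD-based $Q_{\opt}$ in Theorem~\ref{thm:opt-subspace}). I would keep the proof to two or three sentences.
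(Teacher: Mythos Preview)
Your proposal is correct and follows essentially the same approach as the paper: both arguments use that any two orthonormal basis matrices of $\cX$ differ by a right orthogonal factor $Q$, and then deduce that the singular values of $X^{\T}D$ are unaffected. The only cosmetic difference is that the paper verifies singular-value invariance via the Gram matrix identity $[(XQ)^{\T}D]^{\T}[(XQ)^{\T}D]=[X^{\T}D]^{\T}[X^{\T}D]$, whereas you exhibit the new SVD $(Q^{\T}U)\Sigma V^{\T}$ directly; both are standard and equivalent.
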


\begin{proof}
	Pick a particular $X_0\in\bbO^{n\times k}$ such that $\cR(X_0)=\cX$. Any $X\in\bbO^{n\times k}$
	satisfying $\cR(X)=\cX$ takes the form $X_0Q$ for some $Q\in\bbO^{k\times k}$.
	The conclusion is a simple consequence of
	$[(X_0Q)^{\T}D]^{\T}[(X_0Q)^{\T}D]=[X_0^{\T}D]^{\T}[X_0^{\T}D]$, which has nothing to do with $Q$. 
\end{proof}

Owing to this lemma, we define the rank of $\cX\in\scrG_k(\bbR^n)$ with respect to $D\in\bbR^{n\times k}$ by
$$
\rank_D(\cX_*)=\rank(X^{\T}D),
$$
where $X\in\bbO^{n\times k}$ satisfying $\cR(X)=\cX$. Our main result in this section is the next theorem.
Later we will make it more concrete in Theorem~\ref{thm:maximizer-decomp'}.

\begin{theorem}\label{thm:maximizer-decomp}
	Given $\cX_*\in\scrG_k(\bbR^n)$, the maximizer $X_{\opt}$ of \eqref{eq:op-cX*} admits the decomposition
	\begin{equation}\label{eq:maximizer-decomp}
	X_{\opt}=X_{\cX_*}+Y_{\cX_*},
	\end{equation}
	where $X_{\cX_*}$ depends on $\cX_*$ only  and $Y_{\cX_*}$ has a freedom of $\bbO^{(k-r)\times (k-r)}$, and
	$r=\rank_D(\cX_*)$.
	% for some $X\in\bbO^{n\times k}$ satisfying $\cR(X)=\cX$.
	Moreover $\rank(X_{\cX_*})=r$ and $\rank(Y_{\cX_*})=k-r$.
\end{theorem}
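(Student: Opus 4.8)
The plan is to reduce \eqref{eq:op-cX*} to an orthogonal Procrustes problem over $\bbO^{k\times k}$ and to read off the decomposition from a complete description of its solution set. First I would fix any $X_0\in\bbO^{n\times k}$ with $\cR(X_0)=\cX_*$; then $X\in\bbO^{n\times k}$ satisfies $\cR(X)=\cX_*$ if and only if $X=X_0Q$ for some $Q\in\bbO^{k\times k}$. Since $\tr((X_0Q)^{\T}A(X_0Q))=\tr(X_0^{\T}AX_0)$ and $\tr((X_0Q)^{\T}B(X_0Q))=\tr(X_0^{\T}BX_0)$ do not depend on $Q$, maximizing $f_{\theta}(X_0Q)$ over $Q\in\bbO^{k\times k}$ amounts to maximizing $\tr(Q^{\T}X_0^{\T}D)$ over $Q\in\bbO^{k\times k}$. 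This is exactly the setting of Theorem~\ref{thm:opt-subspace} with $\wtd X=X_0$, so a maximizer exists and the optimal value is $\|X_0^{\T}D\|_{\tr}$.

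Next I would pin down the full set of maximizers. Let $X_0^{\T}D=U\Sigma V^{\T}$ be an SVD with $U,V\in\bbO^{k\times k}$ and $\Sigma=\diag(\sigma_1,\dots,\sigma_r,0,\dots,0)$, $\sigma_1\ge\cdots\ge\sigma_r>0$, where $r=\rank(X_0^{\T}D)=\rank_D(\cX_*)$ by Lemma~\ref{lm:rankOfXD}. With $R=U^{\T}QV\in\bbO^{k\times k}$ one gets $\tr(Q^{\T}X_0^{\T}D)=\tr(R^{\T}\Sigma)=\sum_{i=1}^{r}\sigma_iR_{(i,i)}\le\sum_{i=1}^{r}\sigma_i$, with equality if and only if $R_{(i,i)}=1$ for $i=1,\dots,r$; since $R$ is orthogonal, each of its first $r$ rows and columns is then forced to be a standard basis vector, i.e.\ $R=\diag(I_r,W)$ for some $W\in\bbO^{(k-r)\times(k-r)}$. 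Hence the maximizers of \eqref{eq:op-cX*} are exactly the matrices $X_{\opt}=X_0U\diag(I_r,W)V^{\T}$, $W\in\bbO^{(k-r)\times(k-r)}$. Partitioning $U=[U_1,U_2]$ and $V=[V_1,V_2]$ with $U_1,V_1\in\bbR^{k\times r}$, this gives
\[
X_{\opt}=X_0U_1V_1^{\T}+X_0U_2WV_2^{\T}=:X_{\cX_*}+Y_{\cX_*}.
\]

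It then remains to check the stated properties of this decomposition. The rank claims are immediate: $X_0$ has full column rank $k$, $U_1$ and $V_1$ have full column rank $r$, and $W$ is invertible, so $\rank(X_{\cX_*})=\rank(U_1V_1^{\T})=r$ and $\rank(Y_{\cX_*})=\rank(U_2WV_2^{\T})=k-r$; letting $W$ range over $\bbO^{(k-r)\times(k-r)}$ is precisely the asserted freedom of $Y_{\cX_*}$ (a change of the orthonormal completions $U_2,V_2$ of the SVD is absorbed by re-parametrizing $W$, leaving this set unchanged). The one genuine point to verify is that $X_{\cX_*}=X_0U_1V_1^{\T}$ depends on $\cX_*$ only. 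I would establish this by invariance: replacing $X_0$ by $X_0\wtd Q$ with $\wtd Q\in\bbO^{k\times k}$ turns the SVD into $(X_0\wtd Q)^{\T}D=(\wtd Q^{\T}U)\Sigma V^{\T}$, so $U_1$ is replaced by $\wtd Q^{\T}U_1$ and $X_0\wtd Q(\wtd Q^{\T}U_1)V_1^{\T}=X_0U_1V_1^{\T}$ is unchanged; likewise the non-uniqueness of the SVD for a fixed $X_0$ amounts to right-multiplying $U_1$ and $V_1$ by a common orthogonal matrix commuting with $\Sigma$, which cancels in $U_1V_1^{\T}$. Equivalently, one can exhibit the closed form $X_{\cX_*}=P_{\cX_*}D\,(D^{\T}P_{\cX_*}D)^{\dagger/2}$ with $P_{\cX_*}=X_0X_0^{\T}$ the orthogonal projector onto $\cX_*$, an expression manifestly determined by $\cX_*$. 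The main obstacle is exactly this bookkeeping around the non-uniqueness of the SVD and of the basis $X_0$; the rest is a routine computation.
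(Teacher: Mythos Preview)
Your proof is correct and follows essentially the same route as the paper: fix a basis $X_0$ of $\cX_*$, reduce \eqref{eq:op-cX*} to $\max_{Q\in\bbO^{k\times k}}\tr(Q^{\T}X_0^{\T}D)$, describe all maximizers via the SVD of $X_0^{\T}D$ (the paper packages this as Lemma~\ref{lm:opt-Q} and Theorem~\ref{thm:maximizer-decomp'}), and then verify that the first term is invariant under both the choice of $X_0$ and the SVD freedom. Your inline derivation of the Procrustes solution set and the additional closed form $X_{\cX_*}=P_{\cX_*}D\,(D^{\T}P_{\cX_*}D)^{\dagger/2}$ are nice touches not spelled out in the paper, but the overall argument is the same.
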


Next we work to make Theorem~\ref{thm:maximizer-decomp}  concrete by constructing
$X_{\cX_*}$ and $Y_{\cX_*}$ in \eqref{eq:maximizer-decomp} explicitly. To that end, we
pick a $X_*\in\bbO^{n\times k}$ such that $\cR(X_*)=\cX_*$ and keep it fixed. Then any $X\in\bbO^{n\times k}$ satisfying $\cR(X)=\cX_*$ takes the form $X=X_*Q$ for some
$Q\in\bbO^{k\times k}$ and vice versa. With this $X_*$, \eqref{eq:op-cX*-1} can be equivalently reformulated as
\begin{equation}\label{eq:op-cX*-1}
\max_{Q\in\bbO^{k\times k}}\,f_{\theta}(X_*Q).
\end{equation}

\begin{lemma}\label{lm:opt-Q}
	Let $S\in\bbR^{k\times k}$ with SVD $S=U\Sigma V^{\T}$, where $U,\,V\in\bbO^{k\times k}$ and
	\begin{subequations}\label{eq:SVD-Sigma}
		\begin{align}
		&\Sigma=\diag(\mu_1I_{k_1},\ldots,\mu_{t-1} I_{k_{t-1}},\mu_t I_{k_t}) \quad\mbox{with}\,\, \label{eq:SVD-Sigma-1}\\
		&\mu_1>\ldots>\mu_{t-1}>\mu_t\ge 0,\,\, \sum_{i=1}^tk_i=k.  \label{eq:SVD-Sigma-2}
		\end{align}
	\end{subequations}
	Let $r=\rank(T)$, which is $k$ if $\mu_t>0$ or $k-k_t$ if $\mu_t=0$. The maximizers of
	$$
	\max_{Q\in\bbO^{k\times k}}\tr(Q^{\T}S)
	$$
	are given by
	\begin{equation}\label{eq:Qopt-form}
	Q_{\opt}=U_{(:,1:r)}V_{(:,1:r)}^{\T}+U_{(:,r+1:k)}WV_{(:,r+1:k)}^{\T},
	\end{equation}
	where $W\in\bbO^{(k-r)\times (k-r)}$ is arbitrary (by convention, if $r=k$ then $W$ is a null matrix and the second term in
	\eqref{eq:Qopt-form} disappears altogether).
\end{lemma}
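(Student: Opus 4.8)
The plan is to reduce to the diagonal case through the SVD and then exploit the elementary fact that every column of an orthogonal matrix is a unit vector. First I would substitute $S=U\Sigma V^{\T}$ and use the cyclic invariance of the trace to write $\tr(Q^{\T}S)=\tr\big((U^{\T}QV)^{\T}\Sigma\big)=\tr(\wtd Q^{\T}\Sigma)$, where $\wtd Q:=U^{\T}QV$ ranges over all of $\bbO^{k\times k}$ precisely as $Q$ does. Hence it suffices to characterize the maximizers of $\max_{\wtd Q\in\bbO^{k\times k}}\tr(\wtd Q^{\T}\Sigma)$ and then recover $Q_{\opt}=U\wtd Q_{\opt}V^{\T}$.

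Write $\Sigma=\diag(\sigma_1,\dots,\sigma_k)$ with $\sigma_1\ge\dots\ge\sigma_k\ge 0$ (the $\mu_i$ each repeated $k_i$ times), so that $\tr(\wtd Q^{\T}\Sigma)=\sum_{i=1}^{k}\sigma_i\,\wtd Q_{ii}$. Since the $i$-th column of $\wtd Q$ has Euclidean norm $1$, we have $\wtd Q_{ii}\le 1$, whence $\tr(\wtd Q^{\T}\Sigma)\le\sum_{i=1}^k\sigma_i=\tr(\Sigma)$, which already identifies the optimal value. Because each summand $\sigma_i(1-\wtd Q_{ii})$ is nonnegative, equality holds if and only if $\wtd Q_{ii}=1$ for every $i$ with $\sigma_i>0$, i.e., for $i=1,\dots,r$, where $r=\rank(S)=\rank(\Sigma)$ equals $k$ when $\mu_t>0$ and $k-k_t$ when $\mu_t=0$.

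The crux is then to show that the condition $\wtd Q_{ii}=1$ for $i\le r$ forces $\wtd Q=\diag(I_r,W)$ with an otherwise arbitrary $W\in\bbO^{(k-r)\times(k-r)}$. For each $i\le r$ the $i$-th column of $\wtd Q$ is a unit vector whose $i$-th entry equals $1$, hence it equals $\be_i$; thus the first $r$ columns of $\wtd Q$ are $\be_1,\dots,\be_r$, i.e., the leading $r\times r$ block is $I_r$ and the bottom-left block vanishes. Orthogonality of $\wtd Q$ now forces the remaining $k-r$ columns to be orthogonal to $\be_1,\dots,\be_r$ and orthonormal among themselves, i.e., the top-right block is zero and the trailing $(k-r)\times(k-r)$ block is some $W\in\bbO^{(k-r)\times(k-r)}$. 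Conversely, any $\wtd Q$ of this form has $\wtd Q_{ii}=1$ for $i\le r$ while $\sigma_i=0$ for $i>r$, hence attains the maximum. Unwinding $Q_{\opt}=U\wtd Q_{\opt}V^{\T}=U\diag(I_r,W)V^{\T}=U_{(:,1:r)}V_{(:,1:r)}^{\T}+U_{(:,r+1:k)}WV_{(:,r+1:k)}^{\T}$ gives exactly \eqref{eq:Qopt-form}.

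I do not expect a genuine obstacle here; the only care needed is the bookkeeping around the zero singular values when $\mu_t=0$, namely that the free block $W$ is attached to precisely the last $k-r$ columns of $U$ and of $V$, together with the remark that $r=k$ (so $W$ is vacuous and $Q_{\opt}=UV^{\T}$ is unique) exactly when $\mu_t>0$. As an alternative one could route the equality analysis through Lemma~\ref{lm:maxtrace} applied to $H=\wtd Q^{\T}\Sigma$, using $\sigma_i(H)=\sigma_i$ to conclude that $H$ is symmetric positive semidefinite; but the direct column-norm argument is shorter and, crucially, also yields the full parametrization of the maximizer set rather than merely a single maximizer.
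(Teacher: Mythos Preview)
Your proof is correct. The paper states Lemma~\ref{lm:opt-Q} without proof (presumably because the result is classical), so there is no argument to compare against; your reduction via $\wtd Q=U^{\T}QV$ followed by the column-norm bound $\wtd Q_{ii}\le 1$ and the equality analysis is the standard route and is complete, including the full parametrization of the maximizer set.
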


\begin{remark}\label{rk:Qopt-form}
	The decomposition of $Q_{\opt}$ as the sum of two terms in \eqref{eq:Qopt-form} is constructed
	in terms of the SVD of $S$ as specified in the lemma. As they appear, both terms are SVD-dependent!
	However, they are not. In fact, the first term  $U_{(:,1:r)}V_{(:,1:r)}^{\T}$ is the subunitary factor
	of the polar decomposition of $S$ and the factor is unique \cite{li:1993b,li:1995,lisu:2002}.
	The second term represents a set of matrices of the form $U_{\bot}WV_{\bot}^{\T}$, where $W\in\bbO^{(k-r)\times (k-r)}$ is arbitrary, and
	$U_{\bot},\,V_{\bot}\in\bbO^{k\times (k-r)}$ are any orthonormal basis matrix of
	the subspaces $\cR(S)^{\bot}$, $\cR(S^{\T})^{\bot}$, respectively.
\end{remark}

With the help of Lemma~\ref{lm:opt-Q}, we present a concrete version of Theorem~\ref{thm:maximizer-decomp} in
Theorem~\ref{thm:maximizer-decomp'} below.

\begin{theorem}\label{thm:maximizer-decomp'}
	Given $X_*\in\bbO^{n\times k}$, let $X_*^{\T}D=U\Sigma V^{\T}$ be the SVD of $X_*^{\T}D$, where $U,\,V\in\bbO^{k\times k}$ and
	$\Sigma$ as in \eqref{eq:SVD-Sigma}.
	Let $r=\rank(X_*^{\T}D)$.
	For any maximizer $Q_{\opt}$ of \eqref{eq:op-cX*-1},
	\begin{equation}\label{eq:Xopt-form}
	X_*Q_{\opt}=X_*U_{(:,1:r)}V_{(:,1:r)}^{\T}+X_*U_{(:,r+1:k)}WV_{(:,r+1:k)}^{\T},
	\end{equation}
	for which the first term $X_*U_{(:,1:r)}V_{(:,1:r)}^{\T}$ has rank $r$
	and the second term has rank $k-r$ and a freedom in $W\in\bbO^{(k-r)\times (k-r)}$. Moreover,
	\begin{equation}\nonumber%\label{eq:Qopt-property}
	[X_*Q_{\opt}]^{\T}D=V\Sigma V^{\T}\succ 0,\,\,
	\tr([X_*Q_{\opt}]^{\T}D)=\|X_*^{\T}D\|_{\tr}.
	\end{equation}
\end{theorem}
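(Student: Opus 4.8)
The plan is to reduce the optimization over $Q$ to a linear trace maximization and then invoke Lemma~\ref{lm:opt-Q}. Exactly as in the proof of Theorem~\ref{thm:opt-subspace}, for $X=X_*Q$ with $Q\in\bbO^{k\times k}$ the quantities $\tr((X_*Q)^{\T}A(X_*Q))$ and $\tr((X_*Q)^{\T}B(X_*Q))$ are independent of $Q$, so both $g_{\theta}(X_*Q)$ and the denominator $[\tr((X_*Q)^{\T}B(X_*Q))]^{\theta}$ are constant over $\bbO^{k\times k}$. Hence \eqref{eq:op-cX*-1} is equivalent to $\max_{Q\in\bbO^{k\times k}}\tr(Q^{\T}S)$ with $S:=X_*^{\T}D$, whose SVD is $S=U\Sigma V^{\T}$ with $\Sigma$ and $r=\rank(S)$ as in the statement. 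Lemma~\ref{lm:opt-Q} then describes every maximizer as $Q_{\opt}=U_{(:,1:r)}V_{(:,1:r)}^{\T}+U_{(:,r+1:k)}WV_{(:,r+1:k)}^{\T}$ with $W\in\bbO^{(k-r)\times(k-r)}$ arbitrary, and left-multiplying by $X_*$ yields \eqref{eq:Xopt-form}; this simultaneously makes Theorem~\ref{thm:maximizer-decomp} concrete, with $X_{\cX_*}$ and $Y_{\cX_*}$ the two summands.

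For the rank claims, $X_*U_{(:,1:r)}$ has orthonormal columns, being a product of two matrices with orthonormal columns, hence full column rank $r$; multiplying on the right by the full-row-rank $V_{(:,1:r)}^{\T}\in\bbR^{r\times k}$ leaves the rank at $r$. The same argument, now using that $W$ is invertible, shows the second summand has rank $k-r$.

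To identify $[X_*Q_{\opt}]^{\T}D$, write $[X_*Q_{\opt}]^{\T}D=Q_{\opt}^{\T}S=Q_{\opt}^{\T}U\Sigma V^{\T}$ and substitute the block form of $Q_{\opt}$, using $U_{(:,1:r)}^{\T}U=[I_r,\,0]$ and $U_{(:,r+1:k)}^{\T}U=[0,\,I_{k-r}]$. The one point that needs care is the block structure of $\Sigma$ in \eqref{eq:SVD-Sigma}: when $r<k$ its trailing $(k-r)\times(k-r)$ block equals $\mu_tI_{k_t}=0$, so the term carrying $W$ is annihilated; this is exactly why the free parameter $W$, although it genuinely changes $X_*Q_{\opt}$, does not affect $[X_*Q_{\opt}]^{\T}D$. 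What survives is $[X_*Q_{\opt}]^{\T}D=V\Sigma V^{\T}$, symmetric and positive semidefinite (positive definite precisely when $\mu_t>0$, i.e.\ $r=k$), and $\tr([X_*Q_{\opt}]^{\T}D)=\tr(\Sigma)=\sum_{i=1}^{k}\sigma_i(X_*^{\T}D)=\|X_*^{\T}D\|_{\tr}$.

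The only step demanding genuine attention is this last block computation — keeping the partitions of $U$, $V$, $\Sigma$ and $Q_{\opt}$ mutually consistent and tracking which blocks of $\Sigma$ vanish; everything else is immediate from Lemma~\ref{lm:opt-Q}, the argument in the proof of Theorem~\ref{thm:opt-subspace}, and elementary facts about ranks and orthonormal columns. It is also prudent to verify the degenerate case $r=k$, where $W$ and both ``second terms'' disappear by the convention in Lemma~\ref{lm:opt-Q}, so that the formulas remain valid.
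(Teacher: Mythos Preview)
Your proof is correct and follows essentially the same route as the paper: reduce \eqref{eq:op-cX*-1} to $\max_{Q\in\bbO^{k\times k}}\tr(Q^{\T}X_*^{\T}D)$ by noting the $Q$-independence of the quadratic traces, then apply Lemma~\ref{lm:opt-Q} with $S=X_*^{\T}D$. You supply more detail than the paper (which dismisses the rank statements and the identity $[X_*Q_{\opt}]^{\T}D=V\Sigma V^{\T}$ as ``simple consequences''), and your parenthetical that positive definiteness holds precisely when $r=k$ is a correct reading---the ``$\succ 0$'' in the displayed conclusion should really be ``$\succeq 0$'' unless $r=k$.
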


\begin{proof}
	It is not hard to see that
	$$
	\max_{Q\in\bbO^{k\times k}}\,f_{\theta}(X_*Q)=g_{\theta}(X_*)+\frac {\tr([X_*Q]^{\T}D)}{[\tr(X_*^{\T}BX_*)]^{\theta}}.
	$$
	Hence the optimizers of \eqref{eq:op-cX*-1} are the same as those of
	\begin{equation}\nonumber%\label{eq:Qopt-defn}
	\max_{Q\in\bbO^{k\times k}}\tr([X_*Q]^{\T}D)=\max_{Q\in\bbO^{k\times k}}\tr(Q^{\T}X_*^{\T}D).
	\end{equation}
	Hence $Q_{\opt}$ takes the form of \eqref{eq:Qopt-form}, yielding \eqref{eq:Xopt-form} by Lemma~\ref{lm:opt-Q} with $S=X_*^{\T}D$.
	The rest of claims of the theorem
	are simple consequences. 
\end{proof}

Now we are ready to prove Theorem~\ref{thm:maximizer-decomp}.

\begin{proof}[Proof of Theorem~\ref{thm:maximizer-decomp}]
	For any particularly chosen $X_*\in\bbO^{n\times k}$ satisfying $\cR(X_*)=X_*$, adopting the notation of Theorem~\ref{thm:maximizer-decomp}, we find
	$X_{\opt}=X_*Q_{\opt}$ as given by \eqref{eq:Xopt-form}. We claim that the first term $X_*U_{(:,1:r)}V_{(:,1:r)}^{\T}$
	there depends on $\cX_*$ only, i.e., it stays the same for any different choice of $X_*$, although it is constructed by
	a particularly chosen $X_*$. First we note $r=\rank(X_*^{\T}D)$
	depends on $\cX_*$ only by Lemma~\ref{lm:rankOfXD}. Second, the product $U_{(:,1:r)}V_{(:,1:r)}^{\T}$ does not change with the inherent
	variations in SVD, as we argued in Remark~\ref{rk:Qopt-form}.
	Third, suppose a different $\wtd X_*\in\bbO^{n\times k}$ satisfying $\cR(\wtd X_*)=X_*$ is chosen. Then $\wtd X_*=X_*\wtd Q$ for some
	$\wtd Q\in\bbO^{k\times k}$. We have
	$$
	\wtd X_*^{\T}D=(X_*\wtd Q)^{\T}D=\wtd Q^{\T}X_*^{\T}D=(\wtd Q^{\T}U)\Sigma V^{\T}.
	$$
	As we just argued that the product ``$U_{(:,1:r)}V_{(:,1:r)}^{\T}$ does not change with the inherent
	variations in SVD'', we conclude that the  first term in \eqref{eq:Xopt-form} corresponding to $\wtd X_*$ is given by
	$$
	\wtd X_*(\wtd Q^{\T}U)_{(:,1:r)}V_{(:,1:r)}^{\T}=(X_*\wtd Q)({\wtd Q}^{\T}U_{(:,1:r)})V_{(:,1:r)}^{\T}=X_*U_{(:,1:r)}V_{(:,1:r)}^{\T},
	$$
	having nothing to do with $\wtd Q$, as expected. 
\end{proof}

The second terms in \eqref{eq:maximizer-decomp} and its concrete version in \eqref{eq:Xopt-form} disappear
altogether if $r:=\rank_D(\cX_*)=k$. Hence we have the following corollary.

\begin{corollary}\label{cor:Qopt-form}
	Problem \eqref{eq:op-cX*} has a unique maximizer if $\rank_D(\cX_*)=k$.
\end{corollary}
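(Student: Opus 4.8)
The plan is to deduce Corollary~\ref{cor:Qopt-form} directly from the concrete decomposition in Theorem~\ref{thm:maximizer-decomp'} (equivalently, from Theorem~\ref{thm:maximizer-decomp}), simply by specializing to the case $r = \rank_D(\cX_*) = k$ and checking that the ``freedom'' term vanishes. The only substantive thing to verify is that \eqref{eq:op-cX*} actually \emph{has} a maximizer, so that it makes sense to speak of uniqueness; this is immediate from compactness of the constraint set together with positivity of the denominator, as discussed below.

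First I would fix an arbitrary $X_* \in \bbO^{n\times k}$ with $\cR(X_*) = \cX_*$ and recall that, as noted right before \eqref{eq:op-cX*-1}, every $X \in \bbO^{n\times k}$ with $\cR(X) = \cX_*$ is of the form $X_* Q$ for a unique $Q \in \bbO^{k\times k}$, so that \eqref{eq:op-cX*} is equivalent to \eqref{eq:op-cX*-1}. The feasible set $\bbO^{k\times k}$ is compact and $Q \mapsto f_\theta(X_* Q)$ is continuous (the denominator $[\tr(X_*^{\T} B X_*)]^\theta$ is a fixed positive constant, since $\rank(B) > n-k$ guarantees $\tr(X^{\T} B X) > 0$ for any $X \in \bbO^{n\times k}$), so a maximizer exists.

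Next, from the proof of Theorem~\ref{thm:maximizer-decomp'}, the maximizers of \eqref{eq:op-cX*-1} coincide with the maximizers of $\max_{Q \in \bbO^{k\times k}} \tr(Q^{\T} X_*^{\T} D)$, which by Lemma~\ref{lm:opt-Q} (with $S = X_*^{\T} D$) are exactly the matrices
\[
Q_{\opt} = U_{(:,1:r)} V_{(:,1:r)}^{\T} + U_{(:,r+1:k)} W V_{(:,r+1:k)}^{\T},
\qquad W \in \bbO^{(k-r)\times (k-r)} \text{ arbitrary},
\]
where $r = \rank(X_*^{\T} D) = \rank_D(\cX_*)$ by Lemma~\ref{lm:rankOfXD}. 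When $r = k$, the index ranges $r+1:k$ are empty and $W$ is (by the convention stated in Lemma~\ref{lm:opt-Q}) a null matrix, so the second term disappears and $Q_{\opt} = U V^{\T}$ is uniquely determined. Hence $X_{\opt} = X_* Q_{\opt} = X_* U V^{\T}$ is the unique maximizer of \eqref{eq:op-cX*}.

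I do not anticipate a real obstacle here: the corollary is essentially a bookkeeping observation once Theorem~\ref{thm:maximizer-decomp'} is in hand. The only point requiring a modicum of care is to phrase ``unique maximizer'' correctly --- uniqueness is as an element of $\bbO^{n\times k}$ (not merely as a subspace, which is already pinned down by the constraint $\cR(X) = \cX_*$), and this follows because the map $Q \mapsto X_* Q$ is injective on $\bbO^{k\times k}$. One could alternatively invoke Theorem~\ref{thm:maximizer-decomp} verbatim: with $r = k$ the summand $Y_{\cX_*}$ has rank $k - r = 0$, hence $Y_{\cX_*} = 0$ and $X_{\opt} = X_{\cX_*}$ depends on $\cX_*$ alone, which is exactly the asserted uniqueness.
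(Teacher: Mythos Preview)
Your proposal is correct and follows essentially the same route as the paper: the corollary is recorded immediately after the observation that the second term in \eqref{eq:maximizer-decomp} (and its concrete form \eqref{eq:Xopt-form}) disappears when $r=\rank_D(\cX_*)=k$. Your write-up is simply more explicit about existence and about why $Q_{\opt}=UV^{\T}$ is independent of the SVD freedom (which the paper handles via Remark~\ref{rk:Qopt-form}).
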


\section{Self-Consistent Field Iteration}\label{sec:SCF}
In what follows we will limit problem \eqref{eq:main-op} to the case that
\begin{equation}\label{eq:positive}
\framebox{
	\parbox{10cm}{
		there exists $X\in\bbO^{n\times k}$ such that $\tr(X^{\T}AX+X^{\T}D)\ge 0$.
	}
}
\end{equation}
This assumption ensures that, at optimality, the objective value is nonnegative. It evidently holds if $A\succeq 0$,
because $\tr(X^{\T}AX+X^{\T}D)\ge 0$ for $X=UV^{\T}$ where $U,\, V$ are from the SVD of $D=U\Sigma V^{\T}$.

Assumption \eqref{eq:positive} is not really needed  for our main results
to hold in the case when $\theta\in\{0,1\}$, however. But we argue that having \eqref{eq:positive} doesn't lose any generality, either, even
for $\theta\in\{0,1\}$. In fact, it can be verified that for $X\in\bbO^{n\times k}$
\begin{align*}
f_0(X)&=\tr(X^{\T}[A+\alpha I_n]X+X^{\T}D)-k\alpha, \\
f_1(X)&=\frac {\tr(X^{\T}[A+\alpha B]X+X^{\T}D)}{\tr(X^{\T}BX)}-\alpha.
\end{align*}
By choosing a sufficiently large $\alpha>0$, we can make $\tr(X^{\T}[A+\alpha I_n]X+X^{\T}D)\ge 0$
and $\tr(X^{\T}[A+\alpha B]X+X^{\T}D)\ge 0$, equivalently reducing problem \eqref{eq:main-op} for $\theta\in\{0,1\}$ to the case of
it satisfying assumption \eqref{eq:positive}.

\subsection{SCF}
Based on the KKT condition in Theorem~\ref{thm:KKT}, Theorems~\ref{thm:mono} and \ref{thm:opt-subspace}, and the necessary conditions in Theorem~\ref{thm:global-necessary} for a local/global maximizer,
an SCF iteration as outlined in Algorithm~\ref{alg:SCF} is rather natural.

\begin{algorithm}[H]
	\caption{SCF iteration for problem \eqref{eq:main-op} satisfying \eqref{eq:positive}} \label{alg:SCF}
	\begin{algorithmic}[1]
		\REQUIRE $X_0 \in \bbO^{n\times k}$ such that $\tr(X_0^{\T}AX_0+X_0^{\T}D)\ge 0$ if $0<\theta<1$, otherwise not required;
		\ENSURE  a maximizer of \eqref{eq:main-op}.
		%\hline
		%\STATE choose $X_0 \in \bbR^{n\times k}$ as the initial guess and $X_0^{\T}X_0=I_k$
		\FOR{$i=1,2,\ldots$ until convergence}
		\STATE construct $E_i = E(X_{i-1})$ as in \eqref{eq:E(X)};
		\STATE compute the partial eigen-decomposition
		$E_i \what X_i = \what X_i \Lambda_{i-1}$ for the $k$ largest eigenvalues of $E_i$ and their associated eigenvectors, or
		simply some $\what X_i\in\bbO^{n\times k}$ such that
		$\tr(\what X_i^{\T}E_i\what X_i)>\tr(X_{i-1}^{\T}E_iX_{i-1})$;
		\STATE compute SVD: $\what X_i^{\T}D=U_i \Sigma_i V_i^{\T}$;
		\STATE $X_i=\what X_i U_iV_i^{\T}$;
		\ENDFOR
		\RETURN the last $X_i$ as a maximizer of \eqref{eq:main-op}.
	\end{algorithmic}
\end{algorithm}

A few comments are in order for Algorithm~\ref{alg:SCF}.
\begin{enumerate}[1)]
	\item It is required initially $\tr(X_0^{\T}AX_0+X_0^{\T}D)\ge 0$ if $0<\theta<1$ but otherwise not necessary for $\theta\in\{0,1\}$,
	in order to ensure that $\{f_{\theta}(X_i)\}$ is monotonically increasing (see Theorem~\ref{thm:convg1} in the next subsection). An immediate
	question arises as to what to do if we don't have such an initial $X_0$ in the case $0<\theta<1$. Our suggestion is to
	set $\theta=0$ or $1$ and iterate until some $X_i$ with $\tr(X_i^{\T}AX_i+X_i^{\T}D)\ge 0$ and then switch back to the original
	$\theta$.
	
	\item At line 3, we offer two options to obtain $\what X_i$. Evidently, $\what X_i$ associated with the $k$ largest eigenvalues of
	$E_i$ maximizes $\tr(\what X_i^{\T}E_i\what X_i)$. But as we will show later in Theorem~\ref{thm:convg1} that
	the objective  value will still increase as long as $\tr(\what X_i^{\T}E_i\what X_i)>\tr(X_{i-1}^{\T}E_iX_{i-1})$.
	This is an important observation, especially for large scale cases where an iterative method
	has to be used to compute the partial eigen-decomposition of $E_i$ and the convergence to a
	very accurate partial eigen-decomposition is rather slow. When that is the case, we can afford to
	compute  partial eigen-decompositions with gradually increased accuracy as
	the for-loop progresses, namely, use less accurate partial eigen-decompositions at the beginning many for-loops
	to save work and more and more accurate partial eigen-decompositions as $X_i$
	comes closer and closer to the target. Such an adaptive strategy is a delicate issue and often the best strategy
	is problem-dependent. Further study on this is out of the scope of this paper and should be pursued elsewhere.
	\item Lines 4 and 5 execute $\max_Qf(\what X_iQ)$ yielding $X_i$ according to Theorem~\ref{thm:maximizer-decomp'} (with $W=I$ in \eqref{eq:Qopt-form} always if needed).
	$X_i$ is not uniquely defined if $\rank(\what X_i^{\T}D)<k$. But that non-uniqueness doesn't affect
	the corresponding objective value.
	\item A natural stopping criterion to end the for-loop is to use the normalized residual of NEPv \eqref{eq:KKT-NEPv}:
	\begin{equation}\label{eq:stop}
	\frac {[\tr(X_i^{\T}BX_i)]^{\theta}}{2\sqrt k}\cdot\frac {\|E(X_i)X_i-X_i(X_i^{\T}E(X_i)X_i)\|_{\F}}{\|A\|_2+\theta\,|f_1(X_i)|\,\|B\|_2+\|D\|_2}\le \tol,
	\end{equation}
	where $\tol$ is a preset tolerance. For computational convenience, it will be just fine to replace
	the spectral norms $\|A\|_2$, $\|B\|_2$, and $\|D\|_2$ by their corresponding $1$-norm.
\end{enumerate}

Theorem~\ref{thm:property-immediate} presents two properties about each approximation $X_i$.
\begin{theorem}\label{thm:property-immediate}
	Let $\{X_i\}_{i=0}^{\infty}$ be generated by Algorithm~\ref{alg:SCF}.
	\begin{enumerate}[{\rm (a)}]
		\item $X_i^{\T}D$ is symmetric and positive semi-definite, and $\tr(X_i^{\T}D)=\|X_i^{\T}D\|_{\tr}$.
		\item If the eigenvalue gap
		$$
		\lambda_{k}(E(X_{i-1}))-\lambda_{k+1}(E(X_{i-1}))>0,
		$$
		then any two orthonormal eigenbasis matrices $\what X_{i}$ and $\what Y_{i}$ associated
		with  $k$ largest eigenvalues of $E(X_{i-1})$ satisfy $\what Y_{i}=\what X_{i}Q$
		for some  $Q\in\mathbb{O}^{k\times k}$.
		Furthermore, if, additionally, $\rank(D^{\T}{\what X_{i}})=k$ (which is  independent of $Q$), then the
		next approximation $X_{i}$ from line 4 of Algorithm~\ref{alg:SCF} is uniquely determined regardless of
		any inherent freedom in the SVD.
	\end{enumerate}
\end{theorem}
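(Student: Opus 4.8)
The plan is to obtain part~(a) directly from the construction in lines~4--5 of Algorithm~\ref{alg:SCF}, and to deduce part~(b) from the uniqueness of the dominant invariant subspace of a symmetric matrix together with the uniqueness of the polar factor of a nonsingular matrix.

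For part~(a), I would write $X_i=\what X_iU_iV_i^{\T}$ with $\what X_i^{\T}D=U_i\Sigma_iV_i^{\T}$ the SVD produced at line~4. Then $X_i^{\T}D=V_iU_i^{\T}(\what X_i^{\T}D)=V_i\Sigma_iV_i^{\T}$, which is symmetric and positive semi-definite since $\Sigma_i\succeq 0$; its singular values are the diagonal entries of $\Sigma_i$, so $\|X_i^{\T}D\|_{\tr}=\tr(\Sigma_i)=\tr(X_i^{\T}D)$. This is precisely Theorem~\ref{thm:opt-subspace}/Theorem~\ref{thm:maximizer-decomp'} specialized to $X_*=\what X_i$.

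For the first assertion of part~(b), when $\lambda_k(E(X_{i-1}))>\lambda_{k+1}(E(X_{i-1}))$, the span of the eigenspaces of $E(X_{i-1})$ for its $k$ largest eigenvalues is a uniquely determined $k$-dimensional invariant subspace, so any two orthonormal eigenbasis matrices $\what X_i,\what Y_i$ span the same subspace and hence $\what Y_i=\what X_iQ$ for some $Q\in\bbO^{k\times k}$. Since $\rank(D^{\T}\what Y_i)=\rank(D^{\T}\what X_iQ)=\rank(D^{\T}\what X_i)$, the rank hypothesis does not depend on the chosen eigenbasis. For the uniqueness of $X_i$: if $\rank(D^{\T}\what X_i)=k$, then $\what X_i^{\T}D$ is a nonsingular $k\times k$ matrix, so its polar decomposition $\what X_i^{\T}D=P_iH_i$ with $P_i\in\bbO^{k\times k}$ and $H_i=(D^{\T}\what X_i\what X_i^{\T}D)^{1/2}\succ 0$ is unique, and $U_iV_i^{\T}=P_i$ for every SVD $\what X_i^{\T}D=U_i\Sigma_iV_i^{\T}$ (as noted in Remark~\ref{rk:Qopt-form}); hence lines~4--5 yield $X_i=\what X_iP_i$, unaffected by the SVD freedom. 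Starting instead from $\what Y_i=\what X_iQ$, the factorization $\what Y_i^{\T}D=Q^{\T}\what X_i^{\T}D=(Q^{\T}P_i)H_i$ exhibits the polar factor of $\what Y_i^{\T}D$ as $Q^{\T}P_i$, so lines~4--5 applied to $\what Y_i$ give $\what Y_i(Q^{\T}P_i)=\what X_iQQ^{\T}P_i=\what X_iP_i=X_i$. Thus $X_i$ depends on neither the eigenbasis nor the SVD.

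The only point needing care---the nearest thing to an obstacle---is to justify that the polar factor $U_iV_i^{\T}$ is insensitive to the non-uniqueness of the SVD, which is present precisely when $\Sigma_i$ has repeated singular values; this is exactly where nonsingularity of $\what X_i^{\T}D$ enters, via uniqueness of the polar decomposition. Everything else is bookkeeping with orthogonal factors.
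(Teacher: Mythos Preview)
Your proposal is correct and follows essentially the same route as the paper. For part~(a) the paper simply records $X_i^{\T}D=V_i\Sigma_iV_i^{\T}$, exactly as you do; for part~(b) the paper invokes uniqueness of the top-$k$ eigenspace for the first claim and then appeals to Theorem~\ref{thm:maximizer-decomp'} (equivalently Corollary~\ref{cor:Qopt-form}) for the second, whereas you unpack that appeal into the explicit polar-decomposition argument already noted in Remark~\ref{rk:Qopt-form}---so the underlying mechanism is identical, only your write-up is more self-contained.
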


\begin{proof}
	The conclusions in item~(a) follows from $X_i^{\T}D=V_i\Sigma_i V_i^{\T}$.
	
	Consider item~(b). Since the eigenvalue gap is positive, the eigenspace associated with the $k$ largest eigenvalues of
	$E(X_{i-1})$ is unique  \cite[p. 244]{stew:2001a}, and thus the first claim $\what Y_{i}=\what X_{i}Q$ follows.
	The second claim is a consequence of Theorem~\ref{thm:maximizer-decomp'}.
%	\qed
\end{proof}

\subsection{Convergence Analysis}

Much of our analysis is similar to \cite{zhwb:2020} which is for the case $A=0$ and $\theta=1/2$.
Notably, the complete characterization on what the limits of $X_i$ may look like in the rank-deficient situation
$\rank(X_*^{\T}D)$ in Theorem~\ref{thm:convg2} is new even for the special case.

We start by presenting a few  basic convergence properties of our SCF iteration (Algorithm~\ref{alg:SCF})
for solving \eqref{eq:main-op}.

\begin{theorem}\label{thm:convg1}
	Let  the sequence $\{X_i\}_{i=0}^{\infty}$  be generated by the SCF iteration (Algorithm~\ref{alg:SCF}).
	The following statements hold.
	\begin{enumerate}[{\rm (a)}]
		\item The sequence $\{f_{\theta}(X_i)\}$ is monotonically increasing and convergent;
		\item  If		
		\begin{equation}\label{eq:inexacteig}
		\tr(\what X_i^{\T}E(X_{i-1})\what X_i) < \tr(X_{i-1}^{\T}E(X_{i-1}) X_{i-1}),
		\end{equation}
		then $f_{\theta}(X_{i-1})<f_{\theta}(X_i)$;
		\item $\{X_i\}_{i=0}^{\infty}$ has a convergent subsequence $\{X_i\}_{i\in \cI}$, where $\cI$ is subset of $\{0,1,2,\ldots\}$, and
		\begin{equation}\nonumber%\label{eq:obj-subseq}
		\lim_{i\to\infty,~i\in\cI}f_{\theta}(X_i)=\lim_{i\to\infty}f_{\theta}(X_i),
		\end{equation}
		i.e., along any convergent subsequence, the limit of the objective value is the same.
		\item Let  $\{X_i\}_{i\in\cI}$  be any convergent subsequence of $\{X_i\}_{i=0}^{\infty}$ with an accumulation
		point $X_*$ satisfying
		\begin{equation}\label{eq:gapcondition}
		\zeta=\lambda_k(E(X_*))-\lambda_{k+1}(E(X_*))>0.
		\end{equation}
		Then $X_*$ satisfies the first order optimality condition in
		\eqref{eq:KKT} and  the necessary condition  in Theorem~\ref{thm:global-necessary} for a local/global maximizer.
	\end{enumerate}
\end{theorem}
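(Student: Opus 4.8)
The plan is to prove the four items of Theorem~\ref{thm:convg1} essentially in the order listed, leaning heavily on Lemma~\ref{lm:mono}, Theorem~\ref{thm:mono}, Theorem~\ref{thm:opt-subspace}, and Theorem~\ref{thm:property-immediate}. For item~(a), I would first observe that the iterate produced at lines 3--5 of Algorithm~\ref{alg:SCF} is exactly $X_i=\what X_iQ_{\opt}$ with $Q_{\opt}=U_iV_i^{\T}$ from the SVD of $\what X_i^{\T}D$, so Theorem~\ref{thm:opt-subspace} gives $f_\theta(X_i)=g_\theta(\what X_i)+\|\what X_i^{\T}D\|_{\tr}/[\tr(\what X_i^{\T}B\what X_i)]^\theta$. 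Since line~3 guarantees $\tr(\what X_i^{\T}E(X_{i-1})\what X_i)\ge\tr(X_{i-1}^{\T}E(X_{i-1})X_{i-1})$, the hypothesis \eqref{eq:tr-ineq} of Theorem~\ref{thm:mono} holds with $X=X_{i-1}$, $\wtd X=\what X_i$; and the sign condition ``$\theta\in\{0,1\}$ or $\tr(X_{i-1}^{\T}AX_{i-1}+X_{i-1}^{\T}D)\ge 0$'' is met because the initial guess satisfies it and, by induction, $\tr(X_i^{\T}AX_i+X_i^{\T}D)\ge 0$ is preserved once $f_\theta(X_i)\ge f_\theta(X_0)\ge 0$ (using assumption \eqref{eq:positive}); here I must be careful to run the induction so that monotonicity and nonnegativity of $\tr(X_i^{\T}AX_i+X_i^{\T}D)$ bootstrap each other. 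Then \eqref{eq:obj-ineq} of Theorem~\ref{thm:mono} reads $f_\theta(X_{i-1})\le g_\theta(\what X_i)+\|\what X_i^{\T}D\|_{\tr}/[\tr(\what X_i^{\T}B\what X_i)]^\theta=f_\theta(X_i)$. Boundedness of $\{f_\theta(X_i)\}$ follows from continuity of $f_\theta$ on the compact manifold $\bbO^{n\times k}$, so the increasing sequence converges.

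Item~(b) is the strict version: if the inexact-eigenvalue inequality \eqref{eq:inexacteig} holds it actually means the \emph{reverse}, i.e.\ $\tr(\what X_i^{\T}E(X_{i-1})\what X_i)<\tr(X_{i-1}^{\T}E(X_{i-1})X_{i-1})$ fails the line-3 requirement, so I suspect the intended reading is that line~3's strict inequality $\tr(\what X_i^{\T}E_i\what X_i)>\tr(X_{i-1}^{\T}E_iX_{i-1})$ is in force; under that strict hypothesis Theorem~\ref{thm:mono}'s last sentence gives the strict conclusion $f_\theta(X_{i-1})<f_\theta(X_i)$ directly. (I would double-check the direction of \eqref{eq:inexacteig} against the statement and phrase the proof to match whichever is correct; modulo that, it is a one-line appeal to the strictness clause of Theorem~\ref{thm:mono}.)

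For item~(c), compactness of $\bbO^{n\times k}$ gives a convergent subsequence $\{X_i\}_{i\in\cI}$; since $\{f_\theta(X_i)\}$ converges as a whole sequence by~(a), every subsequence—convergent or not—has $f_\theta(X_i)\to\lim_i f_\theta(X_i)$, and by continuity of $f_\theta$ the limit along $\cI$ equals $f_\theta(X_*)$, so in particular it is the common value $\lim_i f_\theta(X_i)$. Item~(d) is the substantive one. Let $X_*$ be an accumulation point along $\cI$ with the gap condition \eqref{eq:gapcondition}. Pass to a further subsequence, relabeled $\cI$, along which $X_{i}\to X_*$ and (using that indices in $\cI$ have their predecessors available) also extract a convergent sub-subsequence of $\{X_{i-1}\}_{i\in\cI}$ with limit $X_{**}$; by~(c) applied to both, $f_\theta(X_*)=f_\theta(X_{**})=\lim f_\theta$. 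The key step is to argue $\cR(X_*)$ is invariant under $E(X_*)$. Because the gap \eqref{eq:gapcondition} is positive at $X_*$ and $E(\cdot)$ is continuous, the spectral projector onto the top-$k$ eigenspace of $E(X_{i-1})$ is continuous near $X_{**}$, hence $\what X_i$ (top-$k$ eigenbasis of $E(X_{i-1})$) has, along a subsequence, a limit $\what X_*$ spanning the top-$k$ eigenspace of $E(X_{**})$. The monotonicity argument of~(a) applied between consecutive iterates forces the inequalities in Theorem~\ref{thm:mono} to become equalities in the limit: $f_\theta(X_{**})=g_\theta(\what X_*)+\|\what X_*^{\T}D\|_{\tr}/[\tr(\what X_*^{\T}B\what X_*)]^\theta=f_\theta(X_*)$, and equality in $\tr(\wtd X^{\T}DX^{\T}\wtd X)\le\|\wtd X^{\T}D\|_{\tr}$ together with the line-4/5 construction forces $X_*=\what X_*Q$ with $X_*^{\T}D\succeq 0$ and $\tr(X_*^{\T}D)=\|X_*^{\T}D\|_{\tr}$, as in Theorem~\ref{thm:property-immediate}(a); moreover the equality $\gamma=0$ in \eqref{eq:gamma} forces $\tr(X_{**}^{\T}BX_{**})=\tr(X_*^{\T}BX_*)$ in the nondegenerate cases. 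The cleanest route is then: $E(X_{i-1})\what X_i=\what X_i\Lambda_{i-1}$ passes to the limit as $E(X_{**})\what X_*=\what X_*\Lambda_*$, and since (by the equality analysis) $X_{**}$ and $X_*$ share a range with $\what X_*$—or more directly, one shows $X_*$ itself is a fixed point of the algorithm map, so $E(X_*)X_*=X_*\Lambda_*$ and $X_*^{\T}D$ is symmetric, which is the KKT/NEPv condition of Theorem~\ref{thm:KKT}. Finally, $X_*^{\T}D\succeq 0$ plus $\tr(X_*^{\T}AX_*+X_*^{\T}D)\ge 0$ (inherited from the induction in~(a)) plus ``$\cR(X_*)$ is an invariant subspace of $E(X_*)$ and it is in fact the top-$k$ one'' (because $\what X_*$ was chosen from the \emph{largest} eigenvalues and the gap is positive, so no reshuffling can occur in the limit) gives exactly the necessary condition of Theorem~\ref{thm:global-necessary}(b), together with Theorem~\ref{thm:global-necessary}(a)'s conclusion $X_*^{\T}D\succeq 0$.

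The main obstacle is item~(d), specifically the limit-exchange that turns the \emph{strict} per-step increase into an \emph{equality} at the accumulation point and then promotes that equality, via the equality cases of Theorem~\ref{thm:mono} / Lemma~\ref{lm:mono} / Lemma~\ref{lm:maxtrace}, into the algebraic statement ``$\cR(X_*)$ is the top-$k$ invariant subspace of $E(X_*)$ and $X_*^{\T}D\succeq 0$''. The delicate points are: (i) ensuring a single subsequence along which $X_i$, $X_{i-1}$, and $\what X_i$ all converge, which needs the gap condition to control the eigenbasis; (ii) handling the possibly rank-deficient case $\rank(\what X_*^{\T}D)<k$, where $X_*$ is not uniquely pinned down by $\what X_*$ but—as noted after Algorithm~\ref{alg:SCF}—the objective value and the invariance conclusion are unaffected; and (iii) checking that the sign/nonnegativity side conditions needed to invoke Theorem~\ref{thm:mono} are propagated to the limit, which is where assumption \eqref{eq:positive} and the induction from item~(a) are essential. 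Items~(a)--(c) I expect to be short and routine given the machinery already in place.
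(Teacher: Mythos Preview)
Your handling of items~(a)--(c) is correct and follows the natural route through Theorems~\ref{thm:mono} and~\ref{thm:opt-subspace}; the induction that propagates nonnegativity of the numerator is the right bootstrap, and you rightly flag the reversed inequality sign in~\eqref{eq:inexacteig}.

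The genuine gap is in item~(d), and it is not merely one of the ``delicate points'' you list but a wrong orientation of the subsequence argument. You take $i\in\cI$ with $X_i\to X_*$ and then look \emph{backward}, extracting $X_{i-1}\to X_{**}$ and trying to control $\what X_i$, the top-$k$ eigenbasis of $E(X_{i-1})$. But continuity of the spectral projector requires the eigenvalue gap at $X_{**}$, whereas hypothesis~\eqref{eq:gapcondition} gives it only at $X_*$; your sentence ``the gap \eqref{eq:gapcondition} is positive at $X_*$ and $E(\cdot)$ is continuous, [so] the spectral projector onto the top-$k$ eigenspace of $E(X_{i-1})$ is continuous near $X_{**}$'' is a non sequitur, since nothing whatsoever is known about $X_{**}$. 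The remedy is to look \emph{forward}: for $i\in\cI$ one has $E(X_i)\to E(X_*)$, so by~\eqref{eq:gapcondition} the top-$k$ eigenspace of $E(X_i)$ is well defined for large $i$ and converges to that of $E(X_*)$; hence $\cR(\what X_{i+1})$ is under control. If $\tr(X_*^{\T}E(X_*)X_*)<\sum_{j=1}^k\lambda_j(E(X_*))$, then one step of Algorithm~\ref{alg:SCF} started at $X_*$ produces $Y$ with $f_\theta(Y)>f_\theta(X_*)$ by the strict clause of Theorem~\ref{thm:mono} and Theorem~\ref{thm:opt-subspace}; since the post-processed objective depends only on $\cR(\what X_{i+1})$, continuity gives $f_\theta(X_{i+1})\to f_\theta(Y)>f_\theta(X_*)$ along $\cI$, contradicting item~(a). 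Therefore $\tr(X_*^{\T}E(X_*)X_*)=\sum_{j=1}^k\lambda_j(E(X_*))$, and the gap $\zeta>0$ forces $\cR(X_*)$ to be exactly the top-$k$ invariant subspace of $E(X_*)$. Combined with $X_*^{\T}D\succeq 0$ (the limit of Theorem~\ref{thm:property-immediate}(a)), Theorem~\ref{thm:KKT} yields the KKT condition and the necessary condition of Theorem~\ref{thm:global-necessary}(b) follows. Your ``more direct'' fixed-point claim at the end is precisely this conclusion, but you cannot reach it via the backward subsequence you set up.
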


To further analyze the convergence  of the sequence $\{X_i\}_{i=0}^{\infty}$,
we now introduce the distance measure between two subspaces $\cX$ and $\cY$ of dimension $k$ \cite[p.95]{sun:1987}.
Let
$\cX=\cR(X)$ and $\cY=\cR(Y)$, where $X,\,Y\in\bbR^{n\times k}$ with
$X^{\T}X=Y^{\T}Y=I_k$. The canonical angles
$\theta_1(\cX,\cY)\ge\cdots\ge\theta_1(\cX,\cY)$ between $\cX$ and $\cY$ are defined by
$$
0\le\theta_i(\cX,\cY):=\arccos \sigma_i(X^{\T}Y)\le\frac {\pi}2 \quad\mbox{for $1\le i\le k$},
$$
and accordingly, 
$
\Theta(\cX,\cY)=\diag(\theta_1(\cX,\cY),\dots,\theta_k(\cX,\cY)).
$
It is known that
\begin{equation}\label{eq:sinTheta}
\dist_2(\cX,\cY):=\|\sin\Theta(\cX,\cY)\|_2
\end{equation}
is a unitarily invariant metric \cite[p.95]{sun:1987} on
Grassmann manifold
$\scrG_k(\bbR^n)$, the collection of all $k$-dimensional subspaces in $\bbR^n$.

The following lemma is an equivalent restatement of \cite[Lemma 4.10]{moso:1983}
(see also \cite[Proposition 7]{kaqi:1999}) in the context of Grassmann manifold $\scrG_k(\bbR^n)$
(see also the supplementary material of \cite{zhwb:2020}).

\begin{lemma}[{\cite[Lemma 4.10]{moso:1983}}]\label{lem:isolatedconvg}
	Let $\cX_*\in \scrG_k(\bbR^n)$ be an isolated accumulation point
	of the sequence $\{\cX_i\in\scrG_k(\bbR^n)\}_{i=0}^{\infty}$, in  the metric \eqref{eq:sinTheta}, such that,
	for every subsequence $\{\cX_i\}_{i\in\cI}$ converging to $\cX_*$, there is
	an infinite subset $\widehat{\cI}\subseteq \cI$ satisfying
	$\{ \dist_2(\cX_i,\cX_{i+1})\}_{i\in\what\cI}\rightarrow 0$.
	Then the entire sequence $\{\cX_i\}_{i=0}^{\infty}$ converges to $\cX_*$.
\end{lemma}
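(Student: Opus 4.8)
The plan is a proof by contradiction, modeled on the classical Ostrowski-type argument but adapted to the weaker hypothesis (small consecutive-index steps only along a sub-subsequence of each subsequence converging to $\cX_*$). Assume the whole sequence $\{\cX_i\}$ does not converge to $\cX_*$ in the metric $\dist_2$. The goal is to manufacture a subsequence that converges to $\cX_*$ yet along which $\dist_2(\cX_i,\cX_{i+1})$ stays bounded below by a positive constant on an infinite index set, which flatly contradicts the stated property.

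First I would fix the scale. Since $\cX_*$ is an \emph{isolated} accumulation point, choose $\epsilon>0$ so small that the closed ball $\overline{B}(\cX_*,\epsilon)=\{\cY\in\scrG_k(\bbR^n):\dist_2(\cY,\cX_*)\le\epsilon\}$ contains no accumulation point of $\{\cX_i\}$ other than $\cX_*$; this ball is compact because $\scrG_k(\bbR^n)$ is a compact metric space. I would then record the elementary fact that a sequence lying in a compact metric space whose only accumulation point is $\cX_*$ must converge to $\cX_*$ (otherwise infinitely many terms sit at distance $\ge\eta$ from $\cX_*$, and extracting a convergent sub-subsequence from them produces a second accumulation point). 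This fact will be used twice.

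Next comes a dichotomy. If the sequence is eventually contained in $B(\cX_*,\epsilon/2)$, then from some index on it lies in the compact set $\overline{B}(\cX_*,\epsilon/2)\subseteq\overline{B}(\cX_*,\epsilon)$, whose only accumulation point of the sequence is $\cX_*$; the recorded fact forces $\cX_i\to\cX_*$, contradicting non-convergence. Otherwise infinitely many indices satisfy $\dist_2(\cX_i,\cX_*)\ge\epsilon/2$, while $\cX_*$ being an accumulation point gives infinitely many indices with $\dist_2(\cX_i,\cX_*)<\epsilon/2$. A short induction then shows that the "exit set" $\cI=\{i:\dist_2(\cX_i,\cX_*)<\epsilon/2 \text{ and } \dist_2(\cX_{i+1},\cX_*)\ge\epsilon/2\}$ is infinite: if it were finite, the sequence would be trapped inside $B(\cX_*,\epsilon/2)$ from some point on, contradicting the present case. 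Along $\cI$ every term $\cX_i$ lies in $\overline{B}(\cX_*,\epsilon/2)\subseteq\overline{B}(\cX_*,\epsilon)$, so by the recorded fact the subsequence $\{\cX_i\}_{i\in\cI}$ converges to $\cX_*$. Applying the lemma's hypothesis to this subsequence yields an infinite $\widehat{\cI}\subseteq\cI$ with $\dist_2(\cX_i,\cX_{i+1})\to0$ over $\widehat{\cI}$. But for $i\in\widehat{\cI}\subseteq\cI$ the triangle inequality gives $\dist_2(\cX_i,\cX_{i+1})\ge\dist_2(\cX_{i+1},\cX_*)-\dist_2(\cX_i,\cX_*)\ge \tfrac{\epsilon}{2}-\dist_2(\cX_i,\cX_*)$, whose right-hand side tends to $\tfrac{\epsilon}{2}>0$ as $i\to\infty$ in $\widehat{\cI}$, since $\cX_i\to\cX_*$ along $\cI\supseteq\widehat{\cI}$. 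This contradicts $\dist_2(\cX_i,\cX_{i+1})\to0$, so the whole sequence converges to $\cX_*$.

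The main obstacle, and the place requiring care, is the middle step: showing the exit set $\cI$ is infinite and — more importantly — that the exit subsequence genuinely converges to $\cX_*$ rather than merely accumulating somewhere inside the small ball. This is exactly where the isolation of $\cX_*$ and the compactness of the closed ball are indispensable. The other delicate point is conceptual: the classical proof would need small steps along the entire drift before exiting the ball, which the hypothesis does not supply; the remedy is to index by the step \emph{immediately before exiting}, so that a single step, not a long drift, is forced to be large and carries the contradiction.
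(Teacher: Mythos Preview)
Your proof is correct. The paper does not prove this lemma at all; it is stated as an equivalent restatement of \cite[Lemma~4.10]{moso:1983} (with pointers to \cite{kaqi:1999} and the supplementary material of \cite{zhwb:2020}) and invoked without argument. Your argument is essentially the classical one underlying that cited result: the exit-index construction, the use of isolation plus compactness to force the exit subsequence to converge to $\cX_*$, and the triangle-inequality contradiction are precisely the standard moves.
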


\begin{theorem}\label{thm:convg2}%\marginpar{\tiny Revise the statement of this Theorem.}
	Let  the sequence $\{X_i\}_{i=0}^{\infty}$  be generated by the SCF iteration (Algorithm~\ref{alg:SCF}),
	and let $X_*$ be an accumulation point of $\{X_i\}_{i=0}^{\infty}$.
	Suppose that
	$\cR(X_*)$  is an isolated accumulation point
	in the metric \eqref{eq:sinTheta} of $\{\cR(X_i)\}_{i=0}^{\infty}$, and that the eigenvalue gap
	assumption \eqref{eq:gapcondition} holds.
	Let $r:=\rank(X_*^{\T}D)$.
	\begin{enumerate}[{\rm (a)}]
		\item The entire sequence $\{\cR(X_i)\}_{i=0}^{\infty}$ converges to $\cR(X_*)$.
		\item If also  $r=k$, then $\{X_i\}_{i=0}^{\infty}$ converges to $X_*$ (in the standard Euclidean metric).
		\item In general if $r<k$, $\{X_i\}_{i=0}^{\infty}$ converges to the set,
		in the notation of Theorem~\ref{thm:maximizer-decomp'} ($U=V$ because $X_*^{\T}D\succeq 0$),
		\begin{equation}\label{eq:bbX*}
		\bbX_*=\left\{X_*V_{(:,1:r)}V_{(:,1:r)}^{\T}+X_*V_{(:,r+1:k)}WV_{(:,r+1:k)}^{\T}\,:\,W\in\bbO^{(k-r)\times (k-r)}\right\}
		\end{equation}
		in the sense that
		\begin{equation}\label{eq:Xi2what}
		\min_{X\in\bbX_*} \|X_i-X\|_2\to 0\,\,\,\mbox{as $i\to\infty$}.
		\end{equation}
	\end{enumerate}
\end{theorem}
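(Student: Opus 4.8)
The plan is to build on the two preceding results that do the heavy lifting. Theorem~\ref{thm:convg1}(a) gives monotone convergence of $\{f_\theta(X_i)\}$, and Theorem~\ref{thm:convg1}(c)-(d) gives that every accumulation point satisfies the KKT/NEPv equations and the necessary conditions of Theorem~\ref{thm:global-necessary}. Lemma~\ref{lem:isolatedconvg} is the abstract convergence engine: it upgrades an \emph{isolated} accumulation point of a sequence on the Grassmann manifold to a \emph{limit} of the whole sequence, provided one can exhibit, along every subsequence converging to $\cX_*$, an infinite sub-subsequence whose consecutive-iterate distances tend to $0$. So part~(a) reduces to verifying that last hypothesis for $\cX_i := \cR(X_i)$ and $\cX_* := \cR(X_*)$.

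For part~(a), let $\{\cR(X_i)\}_{i\in\cI}$ be a subsequence converging to $\cR(X_*)$. I would argue that $\dist_2(\cR(X_i),\cR(X_{i+1}))\to 0$ \emph{along the whole of $\cI$} (so $\what\cI=\cI$ works), using monotone convergence of the objective. Concretely, since $f_\theta(X_i)$ converges, consecutive increments $f_\theta(X_{i+1})-f_\theta(X_i)\to 0$. Combined with Theorem~\ref{thm:mono}/Lemma~\ref{lm:mono} applied to the SCF step from $X_i$ to $X_{i+1}$ — where $\wtd X = \what X_{i+1}$ is the eigenbasis of $E(X_i)$ for the $k$ largest eigenvalues and $X_{i+1}=\what X_{i+1}(U_{i+1}V_{i+1}^{\T})$ — the vanishing increment forces the quantity $\tr(\what X_{i+1}^{\T}E(X_i)\what X_{i+1}) - \tr(X_i^{\T}E(X_i)X_i)$ to go to $0$ along $\cI$. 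Using the positive eigenvalue gap $\zeta$ at $X_*$ (which persists, by continuity of $E(\cdot)$ and of eigenvalues, in a neighborhood of $X_*$, hence for $i\in\cI$ large), a standard Davis--Kahan / eigenvalue-gap estimate turns that vanishing trace difference into $\dist_2(\cR(X_i),\cR(\what X_{i+1}))\to 0$; since $\cR(X_{i+1})=\cR(\what X_{i+1})$ (the post-processing multiplies on the right by an orthogonal matrix), we get $\dist_2(\cR(X_i),\cR(X_{i+1}))\to 0$ along $\cI$. Lemma~\ref{lem:isolatedconvg} then yields convergence of the entire sequence $\{\cR(X_i)\}$ to $\cR(X_*)$.

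For parts~(b) and~(c), once $\cR(X_i)\to\cR(X_*)$ we pass from subspaces to the actual matrices. Write $X_i = X_* P_i + N_i$ where $P_i\in\bbR^{k\times k}$ captures the "rotation" part and $N_i\to 0$ (this is possible because the canonical angles between $\cR(X_i)$ and $\cR(X_*)$ tend to $0$; more cleanly, $X_i = X_* (X_*^{\T}X_i) + (I-X_*X_*^{\T})X_i$ with the second term having norm $\|\sin\Theta\|\to 0$, and $X_*^{\T}X_i$ asymptotically orthogonal). By Theorem~\ref{thm:property-immediate}(a), $X_i^{\T}D = V_i\Sigma_i V_i^{\T}\succeq 0$ with $\tr(X_i^{\T}D)=\|X_i^{\T}D\|_{\tr}$; continuity gives $X_i^{\T}D \to X_*^{\T}D$, and since $X_*^{\T}D\succeq 0$ we may take $U=V$ in its SVD, with $r=\rank(X_*^{\T}D)$. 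In case~(b), $r=k$ means $X_*^{\T}D\succ 0$, so by Theorem~\ref{thm:maximizer-decomp'} (applied with $X_* $ replaced by any basis of $\cR(X_i)$ close to $X_*$) the second, ambiguous, term in \eqref{eq:Xopt-form} disappears and $X_i$ is \emph{uniquely} determined by $\cR(X_i)$ for $i$ large (using Theorem~\ref{thm:property-immediate}(b), whose rank-$k$ hypothesis holds by continuity once $\rank(X_*^{\T}D)=k$); then $X_i\to X_*$ follows from $\cR(X_i)\to\cR(X_*)$ and the continuity of the unique selection. In case~(c), the remaining freedom is exactly the factor $W\in\bbO^{(k-r)\times(k-r)}$ in \eqref{eq:Xopt-form}, so $X_i$ lies (asymptotically, up to the vanishing error $N_i$ and the vanishing perturbation of the SVD data) in the set $\bbX_*$ of \eqref{eq:bbX*}; taking distances gives \eqref{eq:Xi2what}.

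The main obstacle I anticipate is the \emph{quantitative} step in part~(a): converting the vanishing trace gap $\tr(\what X_{i+1}^{\T}E(X_i)\what X_{i+1}) - \tr(X_i^{\T}E(X_i)X_i)\to 0$ into $\dist_2(\cR(X_i),\cR(\what X_{i+1}))\to 0$. This needs the eigenvalue gap to be \emph{uniformly} bounded below along $\cI$ near $X_*$ (handled by continuity of $E$ and of eigenvalues, but must be stated carefully), and it needs the right inequality relating how far $\cR(X_i)$ is from the top-$k$ eigenspace of $E(X_i)$ to the loss in the Rayleigh-trace — essentially a Ky~Fan / $\sin\Theta$-type bound. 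A secondary subtlety is making the decomposition $X_i = X_*P_i + N_i$ interact cleanly with the non-uniqueness in Theorem~\ref{thm:maximizer-decomp'}: one must check that the "fixed" first term $X_*U_{(:,1:r)}V_{(:,1:r)}^{\T}$ converges (it does, by Lemma~\ref{lm:rankOfXD} and the polar-factor uniqueness noted in Remark~\ref{rk:Qopt-form}), while only the $W$-part floats, which is precisely what \eqref{eq:bbX*} encodes.
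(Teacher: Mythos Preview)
Your architecture is right and matches the paper's route (which follows \cite{zhwb:2020}): Lemma~\ref{lem:isolatedconvg} drives part~(a), and Theorem~\ref{thm:maximizer-decomp'} together with the polar-factor uniqueness of Remark~\ref{rk:Qopt-form} handle (b) and~(c). Your sketches for (b) and (c) are on target.

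There is, however, a soft spot in part~(a). You claim that $f_\theta(X_{i+1})-f_\theta(X_i)\to 0$ forces, via Lemma~\ref{lm:mono}/Theorem~\ref{thm:mono}, the trace gap $\tr(\what X_{i+1}^{\T}E(X_i)\what X_{i+1}) - \tr(X_i^{\T}E(X_i)X_i)\to 0$. But those results only go one way (trace inequality $\Rightarrow$ objective inequality) and give no quantitative reverse bound in their \emph{statements}. The claim is in fact true, but to justify it you must open up the proof of Lemma~\ref{lm:mono}: in its notation one gets the identity
\[
g_\theta(\wtd X)+\frac{\tr(\wtd X^{\T}DX^{\T}\wtd X)}{\wtd\beta^{\theta}} - f_\theta(X) - \gamma
= \frac{\beta^{\theta}}{2\,\wtd\beta^{\theta}}\Bigl[\tr(\wtd X^{\T}E(X)\wtd X)-\tr(X^{\T}E(X)X)\Bigr],
\]
which, with $\gamma\ge 0$ and uniform bounds on $\beta,\wtd\beta$, gives the reverse control you need. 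A cleaner alternative --- and one you already cite but never actually deploy --- is Theorem~\ref{thm:convg1}(d): given any subsequence $\cI$ with $\cR(X_i)\to\cR(X_*)$, pass by compactness of $\bbO^{n\times k}$ to $\what\cI\subseteq\cI$ along which $X_i\to Y_*$ in Euclidean norm; then $E(X_i)\to E(Y_*)$ by continuity, and by Theorem~\ref{thm:convg1}(d) the column space $\cR(Y_*)=\cR(X_*)$ is exactly the top-$k$ eigenspace of $E(Y_*)$, so eigenvector perturbation under the gap gives $\cR(X_{i+1})=\cR(\what X_{i+1})\to\cR(X_*)$ along $\what\cI$, whence $\dist_2(\cR(X_i),\cR(X_{i+1}))\to 0$ there. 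No quantitative Lemma~\ref{lm:mono} is needed. Either way, note that your phrase ``in a neighborhood of $X_*$'' for the persistence of the gap presumes $X_i\to X_*$ in Euclidean norm, whereas $\cI$ only gives $\cR(X_i)\to\cR(X_*)$; the passage to $\what\cI$ and the Euclidean limit $Y_*$ just described is what makes that step rigorous.
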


What is remarkable about Theorem~\ref{thm:convg2} is that we start with an accumulation point $X_*$ which always exists because
$\bbO^{n\times k}$ is a bounded set in $\bbR^{n\times k}$ and thus is compact, and end up with the conclusions that
$\{\cR(X_i)\}_{i=0}^{\infty}$
converges to $\cR(X_*)$ and that $\{X_i\}_{i=0}^{\infty}$ converges to $X_*$ under the conditions that
$\rank(X_*^{\T}D)=k$ and $\lambda_k(E(X_*))>\lambda_{k+1}(E(X_*))$. In general, $X_i$ falls into $\bbX_*$
of \eqref{eq:bbX*} as $i\to\infty$.

The set $\bbX_*$ is uniquely determined by $\cR(X_*)$, not dependent of the particular accumulation point $X_*$.

A quantitative convergence estimate like \cite[ineq. (24)]{zhwb:2020} can be derived, too. But the estimated constant is
inevitably too big to be of any use. For that reason, we will simply skip it.

\section{Application to Multi-view Learning}\label{sec:MvSL}

Different from classical learning, multi-view learning aims to learn from multiple views  of the same object
in order to leverage their complementary
and redundant information to boost learning performances \cite{baltruvsaitis2018multimodal}. For example,
in the classification of Internet advertisement on Internet pages \cite{kushmerick1999learning},
the geometry of the image (if available) as well as phrases occuring in the URL, the image's URL and alt text,
the anchor text, and words occurring near the anchor text are considered as different views of a page.
Due to the heterogeneity of multiple views, learning from multi-view data is challenging,
even though they conceal more information. To narrow  the  heterogeneity gap \cite{peng2019cm},
multi-view subspace learning is the most popularly studied methodology by learning proper representations of the multiple views
in a common subspace.
In what follows, we will first briefly introduce the problem formulation of multi-view subspace learning and related works,
and then propose our new model and an efficient alternating iterative method based on our earlier SCF iteration Algorithm~\ref{alg:SCF} to the model.

\subsection{Problem Formulation and Related Work} \label{sec:multiview-related}
Multi-view subspace learning seeks a common latent space via some unknown transformation on each view so that certain
learning criteria over the given multi-view dataset is optimized with respect to these transformations.
Let $\{ (\bz_i^{(1)},\ldots,\bz_i^{(v)}, \by_i) \}_{i=1}^m$ be a multi-view dataset of $v$ views and $m$ instances,
where the $i$th data points $\bz_i^{(s)} \in \bbR^{n_s}$ of all views ($1\le s\le v$) share the same class label
$\by_i \in \{0,1\}^{c}$ of $c$ classes, where
the $r$th entry $(\by_i)_{(r)}=1$ if the $i$th data points belong to class $r$ and otherwise $(\by_i)_{(r)} = 0$.
Linear transformations are often used to perform feature extraction.  Specifically, we look for
projection matrix $P_s \in \bbR^{n_s \times k}$ for view $s$ to transform $\bz_i^{(s)}$ from $\bbR^{n_s}$
to $\bu_i^{(s)} = P_s^{\T} \bz_i^{(s)}$ in the common space $\bbR^{k}$. Represent the $m$ data points of view  $s$ by
$Z_s = [\bz_1^{(s)},\ldots,\bz_m^{(s)}] \in \bbR^{n_s \times m }$ and its latent representation by
$U_s = [\bu_1^{(s)} ,\ldots,\bu_m^{(s)} ] = P_s^{\T} Z_s \in \bbR^{k \times m}$.

Several existing methods \cite{cao2017generalized,sun2015multiview,sharma2012generalized} have explored
both the inter-view correlations and the intra-view class separability from the labeled multi-view data.
Some important statistical quantities are summarized as follows:
\begin{enumerate}
	\item the sample cross-covariance matrices $C_{s,t} = \frac{1}{m} Z_s H_m Z_t^{\T}$, and, in particular,
	the covariance matrices $C_{s,s} = \frac{1}{m} Z_s H_m Z_s^{\T}$,
	\item the between-class scatter matrix $S_{\rm b}^{(s)}=Z_s (Y^{\T} \Sigma^{-1} Y - \frac{1}{m} \bone_m \bone_m^{\T})$,
	\item the within-class scatter matrix $S_{\rm w}^{(s)} = Z_s (I_m - Y^{\T} \Sigma^{-1} Y) Z_s^{\T}$,
	\item the class centers scatter matrix across views $ M_{s,t} = Z_s Y^{\T} \Sigma^{-1} H_c \Sigma^{-1} Y Z_t^{\T}$,
\end{enumerate}
where centering matrix $H_m = I_m - \frac{1}{m} \bone_m \bone_m^{\T}$, label matrix $Y = [\by_1,\ldots, \by_m]$, and $\Sigma = Y Y^{\T}$.

Most existing methods are often formulated as a trace maximization problem
\begin{align}
\max_{P^{\T} B P = I_k} \tr(P^{\T} A P) \label{op:gev}
\end{align}
which is equivalent to a generalized eigenvalue problem (GEP) for matrix pencil $A-\lambda B$ \cite{demm:1997,govl:2013},
where $A=A^{\T}$ and $B=B^{\T}\succ 0$ have the following block structures
\begin{equation}\label{eq:AB4MvSL}
A=\kbordermatrix{ &\sss n_1 &\sss n_2 &\sss \cdots &\sss n_v\\
	\sss n_1 & A_{1,1} & A_{1,2} & \hdots & A_{1,n_v}\\
	\sss n_2 & A_{2,1} & A_{2,2} & \hdots & A_{2,n_v}\\
	\sss \vdots & \vdots & \vdots & & \vdots \\
	\sss n_v & A_{n_v,1} & A_{n_v,2} & \hdots & A_{n_v,n_v}        }, \quad
B=\kbordermatrix{ &\sss n_1 &\sss n_2&\sss \cdots &\sss n_v\\
	\sss n_1 & B_1 &  \\
	\sss n_2 &  & B_2 & \\
	\sss \vdots &  &  & \ddots &  \\
	\sss n_v &  &  &  & B_{n_v} },
\end{equation}
with $A_{s,t}$ and $B_s$ judiciously taken to be $C_{s,t}$, $M_{s,t}$, $S_{\rm b}^{(s)}$, and $S_{\rm w}^{(s)}$ as we will detail in a moment, and
accordingly,
\begin{equation}\label{eq:P-parts}
P=[P_1^{\T},P_2^{\T},\ldots P_{n_v}^{\T}]^{\T},\,\, P_s\in\bbR^{n_s\times k}\,\,\mbox{for $1\le s\le v$},
\end{equation}
and $1\le k\le\min _s n_s$.
For example,
\begin{itemize}
	\item multiset canonical correlation analysis (MCCA) \cite{via2007learning} is (\ref{op:gev})
	with $A_{s,t} = C_{s,t}$ and $B_{s} = C_{s,s}, \forall s, t$,
	\item generalized multi-view analysis (GMA) \cite{sharma2012generalized} is (\ref{op:gev})
	with $A_{s,t} = \alpha C_{s,t}, \forall s\not=t$, $A_{s,s} = S_{\rm b}^{(s)}$, and $B_s=S_{\rm w}^{(s)}, \forall s$,
	\item multi-view linear discriminant analysis (MLDA) \cite{sun2015multiview} is (\ref{op:gev})
	with $A_{s,t} = \alpha C_{s,t}, \forall s\not=t$, $A_{s,s} = S_{\rm b}^{(s)}$, and $B_s=C_{s,s}, \forall s$, and
	\item multi-view modular discriminant analysis (MvMDA) \cite{cao2017generalized} is (\ref{op:gev})
	with $A_{s,t} = M_{s,t}$ and $B_s=S_{\rm w}^{(s)}, \forall s, t$,
\end{itemize}
where $\alpha \geq 0$ is a pre-defined parameter to weigh the importance of class separability. As defined in these methods,
only $B\succeq 0$ is guaranteed, and there is a possibility that $B$ may be singular. When that happens, often $B$ is regularized by
adding $\gamma I$ with a tiny $\gamma$ to it. In MCCA, $A\succeq 0$ always, but it may be indefinite for the other three.
In all methods, the diagonal blocks of
$A$ are always positive semi-definite.

\subsection{Proposed Model and Alternating Iteration}
We propose a new formulation for supervised multi-view subspace learning as
%based on model (\ref{eq:main-op}) as
\begin{equation}\label{eq:OMA}
\max_{P_s^{\T}P_s=I_k,\, \forall s=1,2,\ldots,v}\,%\frac {\tr(P^{\T}AP)}{[\tr(P^{\T}BP)]^{\theta}},
f_{\theta}(P) \quad\mbox{with}\quad f_{\theta}(P):=\frac {\tr(P^{\T}AP)}{[\tr(P^{\T}BP)]^{\theta}},
\end{equation}
where $A$ and $B$ generally will have the block structures as in \eqref{eq:AB4MvSL} with each block taken to be $C_{s,t}$, $M_{s,t}$,
$S_{\rm b}^{(s)}$, or $S_{\rm w}^{(s)}$, depending on learning scenarios as the above existing methods, and
$0\le\theta\le 1$. Function $f_{\theta}(P)$ is well-defined if at least one of the inequalities $\rank(B_s)>n_s-k$ for $1\le s\le v$
is valid.

Comparing with (\ref{op:gev}), the new model (\ref{eq:OMA}) possesses two unique properties:
\begin{enumerate}
	\item Linear projection matrices $P_{s}$ are orthonormal, $ \forall s=1,\ldots,v$.
	This is a preferred property for metric preservation and data visualization, and has been explored
	for unsupervised learning in, e.g., \cite{cugh:2015,zhwb:2020}. However, orthogonality constraints are incompatible
	with constraint in (\ref{op:gev}) if both are imposed;
	\item Trace ratio formulation (\ref{eq:OMA}) is a more essential formulation for
	general feature extraction problem than ratio trace formulation (\ref{op:gev}) \cite{wang2007trace}
	since it naturally solves the above-mentioned incompatible issue. The introduced $\theta$
	can further adjust the relative importance of $\tr(P^{\T} A P)$ against that of $\tr(P^{\T} B P)$. In our later numerical
	experiments, we will investigate the impact of $\theta$ in terms of classification accuracy.
\end{enumerate}

Model \eqref{eq:OMA} is a maximization problem over the product of $v$ Stiefel manifolds $\bbO^{n_s\times k}$.
The KKT conditions can be derived straightforwardly by examining the partial gradients with respect to $P_s$ on
$\bbO^{n_s\times k}$ along the line of derivations in section~\ref{sec:KKT}. In fact, for any fixed $s$, we rewrite
the objective of \eqref{eq:OMA} as
\begin{subequations}\label{eq:wrtPs}
	\begin{equation}\label{eq:wrtPs-f}
	f_{\theta}(\{P_s\})=\frac {\tr(P_{s}^{\T}\what A_{s} P_{s})+\tr(P_{s}^{\T}\what D_{s})}
	{[\tr(P_{s}^{\T}\what B_{s} P_{s})]^{\theta}},
	\end{equation}
	where
	\begin{gather}
	\what A_{s}=A_{ss}+(\alpha_{s}/k) I_{n_{s}}, \quad \what B_{s}=B_{s}+(\beta_{s}/k) I_{n_{s}},\quad\mbox{with} \label{eq:wrtPs-1}\\
	\alpha_{s}=\tr(P_{[s]}^{\T}A_{[s]}P_{[s]}), \,\,
	\beta_{s}=\sum_{s'\ne s}\tr(P_{s'}^{\T}B_{s'}P_{s'}), \,\,
	\what D_{s}=2\sum_{s'\ne s} A_{s, s'}P_{s'}, \label{eq:wrtPs-2}
	\end{gather}
\end{subequations}
$A_{[s]}$ is $A$ after crossing out its $s$ block-row and block-column, and $P_{[s]}$ is $P$ of \eqref{eq:P-parts}
after crossing out its $s$-block.
The dependency of $\what A_{s}$, $\what B_{s}$, and $\what D_{s}$ on $P_{s'}$ for $s'\ne s$ is suppressed for clarity.
The KKT conditions of \eqref{eq:OMA} can be made to consist of $v$ coupled NEPv like \eqref{eq:KKT-NEPv}: for $1\le s\le v$
\begin{subequations}\label{eq:OMA-KKT}
	\begin{equation}\label{eq:Es(Ps)-NEPv}
	E_s(P_s)\,P_s=P_s\Lambda_s,\,\,P_s\in\bbO^{n_s\times k},
	\end{equation}
	where
	\begin{equation}\label{eq:Es(Ps)}
	E_s(P_s)=\frac 2{[\tr(P_s^{\T}\what B_{s}P_s)]^{\theta}}
	\left[\what A_{s}+\frac {\what D_{s}P_s^{\T}+P_s\what D_{s}^{\T}}2
	-\theta f_1(\{P_{s'}\})\what B_{s}\right].
	\end{equation}
\end{subequations}
They are coupled because of the dependency of $\what A_{s}$, $\what B_{s}$, and $\what D_{s}$ on $P_{s'}$ for $s'\ne s$. Individually,
\eqref{eq:OMA-KKT} is the KKT condition of
\begin{equation}\label{eq:OMA-sub-II''}
\max_{P_{s}^{\T}P_{s}=I_k}\frac {\tr(P_{s}^{\T}\what A_{s} P_{s})+\tr(P_{s}^{\T}\what D_{s})}
{[\tr(P_{s}^{\T}\what B_{s} P_{s})]^{\theta}}, \quad
\mbox{given $P_{s'}$ for $s'\ne s$}.
\end{equation}

Along the line of reasoning in section~\ref{sec:KKT}, we can get the next theorem, as an extension of Theorem~\ref{thm:global-necessary}.

\begin{theorem}\label{thm:OMA-global-necessary}
	Let $\{P_s^{\opt}\in\bbO^{n_s\times k}\}_{s=1}^v$ be a local or global maximizer of \eqref{eq:OMA} and let
	$\what A_{s}$, $\what B_{s}$, and $\what D_{s}$ in \eqref{eq:OMA-KKT} be evaluated at $\{P_s^{\opt}\}_{s=1}^v$.
	%The following statements hold.
	\begin{enumerate}[{\rm (a)}]
		\item If $\{P_s^{\opt}\}_{s=1}^v$ is a global maximizer, then $\big(P_s^{\opt}\big)^{\T}\what D_s\succeq 0$ for $1\le s\le v$;
		\item Suppose $f_{\theta}(\{P_s^{\opt}\}_{s=1}^v)\ge 0$. If $\big(P_s^{\opt}\big)^{\T}\what D_s\succeq 0$,
		then $P_s^{\opt}$ is an orthonormal basis matrix
		of the invariant subspace associated with the $k$ largest eigenvalues of $E_s(P_s^{\opt})$.
	\end{enumerate}
\end{theorem}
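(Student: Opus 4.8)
The plan is to reduce Theorem~\ref{thm:OMA-global-necessary} to the single-view result Theorem~\ref{thm:global-necessary} by freezing all but one block. Fix an index $s$ and hold $P_{s'}$ for $s'\ne s$ at the optimal values $P_{s'}^{\opt}$. The reformulation \eqref{eq:wrtPs-f} shows that, with these blocks frozen, $f_{\theta}(\{P_{s'}\})$ as a function of $P_s\in\bbO^{n_s\times k}$ is literally the objective of the single-view problem \eqref{eq:OMA-sub-II''}, which is an instance of \eqref{eq:main-op} with $(A,B,D)$ replaced by $(\what A_s,\what B_s,\what D_s)$ of \eqref{eq:wrtPs-1}--\eqref{eq:wrtPs-2}; in particular the substitution produces no additive constant, because the diagonal shifts $\alpha_s/k$ and $\beta_s/k$ exactly absorb the block-wise cross terms of $\tr(P^{\T}AP)$ and $\tr(P^{\T}BP)$. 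I would first record that this subproblem is well-posed in the sense required by Theorem~\ref{thm:global-necessary}, i.e.\ $\rank(\what B_s)>n_s-k$: since each $B_{s'}\succeq 0$ and at least one $B_{s_0}$ satisfies $\rank(B_{s_0})>n_{s_0}-k$ (so that $\tr(P_{s_0}^{\T}B_{s_0}P_{s_0})>0$ on $\bbO^{n_{s_0}\times k}$), either $\beta_s>0$ and $\what B_s\succ 0$, or $s=s_0$ and already $\rank(\what B_s)\ge\rank(B_s)>n_s-k$.

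For part~(a), suppose $\{P_s^{\opt}\}_{s=1}^v$ is a global maximizer of \eqref{eq:OMA}. Then for each fixed $s$ the block $P_s^{\opt}$ must be a global maximizer of the single-view problem \eqref{eq:OMA-sub-II''} with the remaining blocks frozen at their optimal values; otherwise one could strictly increase $f_{\theta}$ by changing only the $s$th block, contradicting global optimality. Applying Theorem~\ref{thm:global-necessary}(a) to this subproblem (with $D\leftarrow\what D_s$) gives $(P_s^{\opt})^{\T}\what D_s\succeq 0$, and ranging over $s$ proves~(a).

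For part~(b), a local maximizer $\{P_s^{\opt}\}$ of \eqref{eq:OMA} restricts, for each $s$, to a local maximizer of \eqref{eq:OMA-sub-II''} by the same slicing argument. The numerator of the subproblem objective at $P_s^{\opt}$ equals $f_{\theta}(\{P_{s'}^{\opt}\})\cdot[\tr((P_s^{\opt})^{\T}\what B_s P_s^{\opt})]^{\theta}$, which is $\ge 0$ because $f_{\theta}(\{P_s^{\opt}\})\ge 0$ and the bracketed factor is positive; hence the hypothesis $\tr(X_{\opt}^{\T}AX_{\opt}+X_{\opt}^{\T}D)\ge 0$ of Theorem~\ref{thm:global-necessary}(b) holds for the subproblem. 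Together with the standing hypothesis $(P_s^{\opt})^{\T}\what D_s\succeq 0$ of part~(b), Theorem~\ref{thm:global-necessary}(b) then asserts that $P_s^{\opt}$ is an orthonormal basis matrix of the invariant subspace associated with the $k$ largest eigenvalues of the subproblem's $E$-matrix; by \eqref{eq:E(X)} this matrix is exactly $E_s(P_s^{\opt})$ of \eqref{eq:Es(Ps)} once one observes that $f_1$ of the subproblem evaluated at $P_s^{\opt}$ coincides with $f_1(\{P_{s'}^{\opt}\})$.

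There is no deep obstacle here; the argument is essentially bookkeeping. The one step that genuinely needs care is verifying that the block-frozen full objective is the single-view objective \eqref{eq:OMA-sub-II''} \emph{with no residual additive constant} and with $\what A_s,\what B_s,\what D_s,E_s$ matching $A,B,D,E(X)$ term for term --- this is precisely where the specific shifts $\alpha_s/k$ and $\beta_s/k$ in \eqref{eq:wrtPs-1}--\eqref{eq:wrtPs-2} are used. A secondary point to check is the well-posedness $\rank(\what B_s)>n_s-k$ of each subproblem, sketched above, so that Theorem~\ref{thm:global-necessary} is indeed applicable.
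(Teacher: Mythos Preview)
Your proposal is correct and follows essentially the same route as the paper's own proof: freeze all blocks but the $s$th, recognize the resulting problem as an instance of \eqref{eq:main-op} via \eqref{eq:wrtPs-f}--\eqref{eq:wrtPs-2}, and invoke Theorem~\ref{thm:global-necessary}(a) and (b) respectively. Your write-up is actually more careful than the paper's in that you explicitly verify the well-posedness condition $\rank(\what B_s)>n_s-k$ and check that the subproblem's $E$-matrix and its $f_1$ coincide with $E_s(P_s^{\opt})$ and $f_1(\{P_{s'}^{\opt}\})$; the paper takes these identifications for granted.
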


\begin{proof}
	If $\{P_s^{\opt}\}_{s=1}^v$ is a global maximizer, then $P_s^{\opt}$ for a fixed $s$ is a global maximizer
	of \eqref{eq:OMA-sub-II''} with $\what A_{s}$, $\what B_{s}$, and $\what D_{s}$ evaluated at
	$\{P_{s'},\,s'\ne s\}$. Item (a) is a consequence of Theorem~\ref{thm:global-necessary}(a).
	
	Again with $\what A_{s}$, $\what B_{s}$, and $\what D_{s}$ evaluated at
	$\{P_{s'},\,s'\ne s\}$, $f_{\theta}(\{P_s^{\opt}\}_{s=1}^v)\ge 0$ implies
	$\tr([P_s^{\opt}]^{\T}\what A_{s} P_s^{\opt})+\tr([P_s^{\opt}]^{\T}\what D_{s})\ge 0$.
	Apply Theorem~\ref{thm:global-necessary}(b) to \eqref{eq:OMA-sub-II''} to conclude the proof of item~(b).
%	\qed
\end{proof}

\subsection{Alternating Iteration}

Although
generic optimization methods for optimizing a
smooth function over the product of the Stiefel manifolds $\bbO^{n_s\times k}$ are available; for example, classical optimization algorithms such as
the steepest descent gradient or the trust-region methods over the Euclidean space have been extended to the general
Riemannian manifolds in, for example, \cite{abms:2008},
they do not make use of the special trace-fractional structure. In what follows, we propose to
solve \eqref{eq:OMA} by maximizing its objective alternatingly over $\{P_s\in\bbO^{n_s\times k}\}_{s=1}^v$ in either
the Jacobi-style or Gauss-Seidel-style updating scheme as outlined in Algorithm~\ref{alg:OMA-theta},
where the SCF iteration in Algorithm~\ref{alg:SCF} serves as
the computational engine to solve each subproblem \eqref{eq:OMA-sub-II''} over just one $P_s$.
Problem~\eqref{eq:OMA-sub-II''} is in the form of \eqref{eq:main-op} for which  Algorithm~\ref{alg:SCF}
is provably convergent, provided it starts with an initial approximation at which the numerator
$\tr(P_{s}^{\T}\what A_{s} P_{s})+\tr(P_{s}^{\T}\what D_{s})\ge 0$.

\begin{algorithm}[t]
	\caption{OMvSL$\theta$: Orthogonal Multi-view Subspace Learning via $\theta$-Trace Ratio }\label{alg:OMA-theta}
	\begin{algorithmic}[1]
		\REQUIRE $A$ and $B$ in \eqref{eq:AB4MvSL}, $1\le k\le\min_s n_s$,
		and a tolerance $\epsilon$;
		
		\ENSURE  $\{P_s\in\bbO^{n_s\times k}\}$ that approximately solves (\ref{eq:OMA}).
		
		\STATE pick $\{P_s^{(0)}\in\bbO^{n_s\times k}\}_{s=1}^v$ satisfying $\tr([P^{(0)}]^{\T}AP^{(0)})\ge 0$ if $0<\theta<1$, otherwise not required,
		where                    $P^{(0)}=\big[(P_1^{(0)})^{\T},\ldots,(P_v^{(0)})^{\T}\big]^{\T}$;
		
		\STATE $i=0$, and evaluate the objective of (\ref{eq:OMA}) at $\{P_s^{(0)}\}_{s=1}^v$ to $f$;
		
		\REPEAT
		\STATE $f_0=f$, $f=0$;
		\FOR{$s=1$ to $v$}
		\STATE form \eqref{eq:OMA-sub-II''} with either $P_{s'}=P_{s'}^{(i)},\,\,\forall\,s'\ne s$
		for  Jacobi-style updating, or
		$P_{s'}=P_{s'}^{(i+1)},\,\,1\le s'<s$ and $P_{s'}=P_{s'}^{(i)},\,\,s< s'\le v$
		for  Gauss-Seidel-style updating;
		\STATE solve \eqref{eq:OMA-sub-II''} by Algorithm~\ref{alg:SCF} (with $P_s^{(i)}$ as an initial guess) for its maximizer $P_s^{(i+1)}$;
		\ENDFOR
		\STATE $f_0=f$, and evaluate the objective of (\ref{eq:OMA}) at $\{P_s^{(i+1)}\}_{s=1}^v$ to $f$;		
		\STATE $i=i+1$;
		\UNTIL{$|f-f_0|\le\epsilon f$;}
		\RETURN last $\{P_s^{(i)}\in\bbO^{n_s\times k}\}$.
	\end{algorithmic}
\end{algorithm}

Algorithm~\ref{alg:OMA-theta} requires initially
$\tr([P^{(0)}]^{\T}AP^{(0)})\ge 0$. The condition guarantees that the objective (\ref{eq:OMA}) is monotonically
increasing for Gauss-Seidel-style updating if $0<\theta<1$, but it is not necessary if $\theta\in\{0,1\}$, similarly to
what we previously remarked for Algorithm~\ref{alg:SCF}. In case when we don't have an initial guess $P^{(0)}$
satisfying $\tr([P^{(0)}]^{\T}AP^{(0)})\ge 0$, we suggest to set $\theta=0$ and $1$ and iterate until some $P^{(i)}$
such that $\tr([P^{(i)}]^{\T}AP^{(i)})\ge 0$ and then switch back to the original $\theta$. Unfortunately,
it is not clear if the monotonicity property in the objective holds for Jacobi-style updating
even with $\tr([P^{(0)}]^{\T}AP^{(0)})\ge 0$.
In all of our numerical experiments in section~\ref{ssec:egs4MvL}, we simply take $P_s^{(0)}$ to be the first $k$ columns of $I_{n_s}$
and didn't encounter any convergence issue nonetheless for both Jacobi-style and Gauss-Seidel-style updating.

Next we will discuss the convergence of Algorithm~\ref{alg:OMA-theta}.
With Jacobi-style updating, Algorithm~\ref{alg:OMA-theta} generates a sequence $\big\{\{P_s^{(i)}\}_{s=1}^v\big\}_{i=0}^{\infty}$
and the same can be said for with Gauss-Seidel-style updating. But for the convenience of convergence analysis, we
shall expand the sequence by inserting $v-1$ additional intermediate approximations
$$
(P_1^{(i+1)},\ldots,P_s^{(i+1)},P_{s+1}^{(i)},\ldots,P_v^{(i)}),\,s=1,2,\ldots,v-1
$$
into between
$\{P_s^{(i)}\}_{s=1}^v$ and $\{P_s^{(i+1)}\}_{s=1}^v$ in the case of Gauss-Seidel-style updating. We then re-index the expanded
sequence and still denote it by $\big\{\{P_s^{(i)}\}_{s=1}^v\big\}_{i=0}^{\infty}$.

\begin{theorem}\label{thm:OMA-convg1}
	Let  the sequence $\big\{\{P_s^{(i)}\}_{s=1}^v\big\}_{i=0}^{\infty}$  be generated by Algorithm~\ref{alg:OMA-theta},
	and let $\{P_s^{(*)}\}_{s=1}^v$ be an accumulation point of $\big\{\{P_s^{(i)}\}_{s=1}^v\big\}_{i=0}^{\infty}$. Evaluate
	$\what A_{s}$, $\what B_{s}$, and $\what D_{s}$ in \eqref{eq:OMA-KKT} at $\{P_{s'}^{(*)},\,s'\ne s\}$ for each $s$ to
	$\what A_{s}^{(*)}$, $\what B_{s}^{(*)}$, and $\what D_{s}^{(*)}$, respectively.
	\begin{enumerate}[{\rm (a)}]
		\item $(P_s^{(*)}\big)^{\T}\what D_s^{(*)}\succeq 0$ for $1\le s\le v$.
		\item $\{f_{\theta}(\{P_s^{(i)}\})\}_{i=0}^{\infty}$ is monotonically increasing in the case of Gauss-Seidel-style updating and
		thus convergent.
	\end{enumerate}
\end{theorem}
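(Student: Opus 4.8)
The plan is to prove Theorem~\ref{thm:OMA-convg1} by reducing each claim to the corresponding single-block result already established for problem \eqref{eq:main-op}. The key observation is that, for a fixed $s$, one step of Algorithm~\ref{alg:OMA-theta} applied to view $s$ is precisely Algorithm~\ref{alg:SCF} applied to the single-block problem \eqref{eq:OMA-sub-II''} with the data matrices $\what A_s,\what B_s,\what D_s$ frozen at their current values (which depend only on $\{P_{s'}:s'\ne s\}$). So the per-view inequalities from Theorem~\ref{thm:property-immediate}(a) and the monotonicity from Theorem~\ref{thm:convg1}(a) transfer over, provided the hypothesis $\tr(P_s^{\T}\what A_sP_s)+\tr(P_s^{\T}\what D_s)\ge 0$ is maintained; I would track this invariant separately.

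For item~(a), I would argue as follows. Take a convergent subsequence $\big\{\{P_s^{(i)}\}_{s=1}^v\big\}_{i\in\cI}$ with limit $\{P_s^{(*)}\}_{s=1}^v$. For each $i$ and each $s$, line~7 of Algorithm~\ref{alg:OMA-theta} invokes Algorithm~\ref{alg:SCF} on \eqref{eq:OMA-sub-II''}, and by Theorem~\ref{thm:property-immediate}(a) the output $P_s^{(i+1)}$ satisfies $\big(P_s^{(i+1)}\big)^{\T}\what D_s^{(i)}\succeq 0$, where $\what D_s^{(i)}$ is $\what D_s$ evaluated at the $P_{s'}$'s used at that moment. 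Because $\what D_s$ is a continuous (in fact linear) function of the other blocks via \eqref{eq:wrtPs-2}, and because along the (expanded) subsequence all the relevant $P_{s'}$'s converge to $P_{s'}^{(*)}$, we get $\what D_s^{(i)}\to\what D_s^{(*)}$ and $P_s^{(i+1)}\to P_s^{(*)}$. Passing to the limit in the closed condition $\succeq 0$ gives $\big(P_s^{(*)}\big)^{\T}\what D_s^{(*)}\succeq 0$. One subtlety to handle carefully is that in the Gauss-Seidel case the blocks are updated at staggered times, which is exactly why the expanded/re-indexed sequence was introduced; I would note that along the expanded sequence each ``move'' changes exactly one block and the limit point of the whole expanded sequence still has every block equal to $P_s^{(*)}$, so the continuity argument goes through.

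For item~(b), monotonicity in the Gauss-Seidel case, I would observe that each inner step $s$ replaces $P_s$ by the SCF output for \eqref{eq:OMA-sub-II''} while holding the other blocks fixed, and $f_\theta$ of the full problem \eqref{eq:OMA}, viewed as a function of $P_s$ alone with the others frozen, is exactly the objective of \eqref{eq:OMA-sub-II''} up to the reformulation in \eqref{eq:wrtPs}. By Theorem~\ref{thm:convg1}(a) (or Theorems~\ref{thm:mono} and \ref{thm:opt-subspace} directly), each such inner step does not decrease that objective, hence does not decrease $f_\theta$ of the full problem; chaining the $v$ inner steps of one outer iteration and then over all iterations gives that $\{f_\theta(\{P_s^{(i)}\})\}$ is nondecreasing. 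Since the feasible set is compact and $f_\theta$ is bounded above on it (the denominator is bounded below by positivity of the relevant $\what B_s$ and the numerator is bounded), the sequence converges. I would also point out that the hypothesis $\tr([P^{(0)}]^{\T}AP^{(0)})\ge 0$ with $\theta\in(0,1)$ propagates: once the full objective is nonnegative and nondecreasing, it stays nonnegative, which keeps the numerator condition needed to invoke Algorithm~\ref{alg:SCF} legitimately at every step (and for $\theta\in\{0,1\}$ it is not needed at all, per Remark~\ref{rk:mono}).

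The main obstacle I expect is bookkeeping rather than conceptual: making the continuity-and-limit argument in item~(a) airtight across the Gauss-Seidel staggering, i.e., verifying that along the expanded sequence the ``freshness'' of each block used in forming $\what D_s^{(i)}$ is consistent with converging to the common limit $\{P_{s'}^{(*)}\}$, and similarly ensuring that the chain of non-decrease inequalities in item~(b) is applied to the correct frozen data at each sub-step. Neither is hard, but both require stating precisely what the re-indexed expanded sequence looks like; everything else is a direct appeal to Theorems~\ref{thm:mono}, \ref{thm:opt-subspace}, \ref{thm:property-immediate}, and \ref{thm:convg1}.
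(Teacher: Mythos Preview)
Your proposal is correct and follows essentially the same approach as the paper's own proof: item~(a) via the per-step guarantee $\big(P_s^{(i)}\big)^{\T}\what D_s\succeq 0$ from Algorithm~\ref{alg:SCF} (Theorem~\ref{thm:property-immediate}(a)) passed to the limit, and item~(b) via the observation that in the expanded Gauss-Seidel sequence consecutive iterates differ in a single block whose update by Algorithm~\ref{alg:SCF} is non-decreasing. Your write-up is in fact more careful than the paper's, which compresses the limit argument for (a) into a single sentence and does not spell out the nonnegativity invariant you track for (b).
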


\begin{proof}
	Item (a) holds because $(P_s^{(i)}\big)^{\T}\what D_s\succeq 0$ is designed to hold in Algorithm~\ref{alg:SCF} for
	$\what D_s$ at the time. Because of our expansion in the sequence of approximations in notation for
	Gauss-Seidel-style updating, $\{P_s^{(i)}\}_{s=1}^v$ differs from $\{P_s^{(i+1)}\}_{s=1}^v$ in just one of the $P_s^{(i)}$, and
	that particular $P_s^{(i)}$ is updated by Algorithm~\ref{alg:SCF} so that the objective value is increased. Hence item~(b) holds.
%	\qed
\end{proof}

We introduce a metric for
Cartesian products of $k$ dimension subspaces:
\begin{equation}\label{eq:OMA-sinTheta}
\dist_2(\{\cP_s\},\{\cQ_s\})=\sum_s\|\sin\Theta(\cP_s,\cQ_s)\|_2
\end{equation}
for $(\cP_1,\ldots,\cP_v),\,(\cQ_1,\ldots,\cQ_v)\in\scrG_k(\bbR^{n_1})\times\cdots\scrG_k(\bbR^{n_v})$.
Again the following lemma, similar to Lemma~\ref{lem:isolatedconvg}, is an equivalent restatement of \cite[Lemma 4.10]{moso:1983}
(see also \cite[Proposition 7]{kaqi:1999}) in the context of the product of Grassmann manifolds $\scrG_k(\bbR^{n_s})$.

\begin{lemma}[{\cite[Lemma 4.10]{moso:1983}}]\label{lem:OMA-isolatedconvg}
	Let $\{\cP_s^{(*)}\in \scrG_k(\bbR^{n_s})\}_{s=1}^v$ be an isolated accumulation point
	of the sequence $\big\{\{\cP_s^{(i)}\in\scrG_k(\bbR^{n_s})\}_{s=1}^v\big\}_{i=0}^{\infty}$, in  the metric \eqref{eq:OMA-sinTheta}, such that,
	for every subsequence $\big\{\{\cP_s^{(i)}\}_s\big\}_{i\in\cI}$ converging to $\{\cP_s^{(*)}\}_{s=1}^v$, there is
	an infinite subset $\widehat{\cI}\subseteq \cI$ satisfying
	$\{ \dist_2(\{\cP_s^{(i)}\}_s,\{\cP_s^{(i+1)}\}_s\}_{i\in\what\cI}\rightarrow 0$.
	Then the entire sequence $\big\{\{\cP_s^{(i)}\}_{s=1}^v\big\}_{i=0}^{\infty}$ converges to $\{\cP_s^{(*)}\}_{s=1}^v$.
\end{lemma}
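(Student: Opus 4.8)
The plan is to observe that the cited result \cite[Lemma 4.10]{moso:1983}, which already underlies Lemma~\ref{lem:isolatedconvg}, is a statement about sequences in a metric space whose proof uses nothing beyond the metric-space structure of the ambient space. Consequently, once we know that the Cartesian product $\scrG_k(\bbR^{n_1})\times\cdots\times\scrG_k(\bbR^{n_v})$, equipped with $\dist_2$ from \eqref{eq:OMA-sinTheta}, is a compact metric space in which ``accumulation point'' and ``convergence'' mean the obvious componentwise things, the conclusion drops out either by quoting \cite[Lemma 4.10]{moso:1983} directly or by repeating the proof of Lemma~\ref{lem:isolatedconvg} verbatim with the single Grassmann manifold replaced by the product and \eqref{eq:sinTheta} replaced by \eqref{eq:OMA-sinTheta}.

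First I would check that \eqref{eq:OMA-sinTheta} defines a metric on the product. Each summand $(\cP_s,\cQ_s)\mapsto\|\sin\Theta(\cP_s,\cQ_s)\|_2$ is the metric $\dist_2$ of \eqref{eq:sinTheta} on the factor $\scrG_k(\bbR^{n_s})$ \cite[p.95]{sun:1987}, pulled back along the $s$th coordinate projection. A finite sum of pseudometrics pulled back from the factors is again a pseudometric, and here it separates points, since $\dist_2(\{\cP_s\},\{\cQ_s\})=0$ forces $\|\sin\Theta(\cP_s,\cQ_s)\|_2=0$, hence $\cP_s=\cQ_s$, for every $s$; thus \eqref{eq:OMA-sinTheta} is a genuine metric. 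Its topology is the product topology (the $\ell^1$-type and $\ell^\infty$-type product metrics are bi-Lipschitz equivalent on a finite product), and since each $\scrG_k(\bbR^{n_s})$ is compact, so is $\scrG_k(\bbR^{n_1})\times\cdots\times\scrG_k(\bbR^{n_v})$. In particular, accumulation points exist, convergence in $\dist_2$ is exactly componentwise convergence, and the two hypotheses of the lemma — that $\{\cP_s^{(*)}\}_{s=1}^v$ is an \emph{isolated} accumulation point, and that along every subsequence $\big\{\{\cP_s^{(i)}\}_s\big\}_{i\in\cI}$ converging to it there is an infinite $\what\cI\subseteq\cI$ with $\dist_2(\{\cP_s^{(i)}\}_s,\{\cP_s^{(i+1)}\}_s)\to 0$ along $\what\cI$ — transcribe verbatim into the hypotheses of \cite[Lemma 4.10]{moso:1983} for the metric space $\big(\scrG_k(\bbR^{n_1})\times\cdots\times\scrG_k(\bbR^{n_v}),\dist_2\big)$.

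With this reduction in hand, the proof concludes by invoking \cite[Lemma 4.10]{moso:1983} with $\cX_i:=\{\cP_s^{(i)}\}_{s=1}^v$ and $\cX_*:=\{\cP_s^{(*)}\}_{s=1}^v$: the entire sequence $\big\{\{\cP_s^{(i)}\}_{s=1}^v\big\}_{i=0}^{\infty}$ converges to $\{\cP_s^{(*)}\}_{s=1}^v$ in the metric \eqref{eq:OMA-sinTheta}, and hence $\cP_s^{(i)}\to\cP_s^{(*)}$ in $\scrG_k(\bbR^{n_s})$ for each $s$. If one prefers not to cite the abstract version, the fallback is to copy the proof of Lemma~\ref{lem:isolatedconvg} line by line, which goes through unchanged because it never uses anything about $\scrG_k(\bbR^n)$ beyond its being a metric space.

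I do not expect a genuine obstacle here: all the mathematical content is already carried by the cited lemma, and the only part that needs care is the routine bookkeeping that the sum in \eqref{eq:OMA-sinTheta} is a metric inducing the product topology on a compact space, so that the words ``isolated accumulation point'' and ``converges'' retain the same meaning before and after the reduction. The mild subtlety worth spelling out is that one must use that $\dist_2$ is \emph{equivalent} to the max-metric on the product (so that componentwise statements and $\dist_2$-statements coincide); everything else is immediate.
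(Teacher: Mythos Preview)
Your proposal is correct and matches the paper's own treatment: the paper gives no proof at all, merely labeling the lemma an ``equivalent restatement of \cite[Lemma 4.10]{moso:1983}'' in the product-of-Grassmannians setting, and your argument supplies exactly the routine verification (that \eqref{eq:OMA-sinTheta} is a metric on the compact product inducing the product topology) needed to make that restatement legitimate. There is nothing more to add.
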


\begin{theorem}\label{thm:OMA-convg2}
	To the conditions of Theorem~\ref{thm:OMA-convg1} add:
	$\{\cR(P_s^{(*)})\}_{s=1}^v$   is an isolated accumulation point
	in the metric \eqref{eq:OMA-sinTheta} of $\big\{\{\cR(P_s^{(i)})\}_{s=1}^v\big\}_{i=0}^{\infty}$ and
	the eigenvalue gaps
	\begin{equation}\nonumber%\label{eq:OMA-gapcondition}
	\lambda_k(E_s^{(*)}(P_s^{(*)}))-\lambda_{k+1}(E_s^{(*)}(P_s^{(*)}))>0\,\,\mbox{for $1\le s\le v$},
	\end{equation}
	where each $E_s^{(*)}(P_s^{(*)})$ is defined as in \eqref{eq:Es(Ps)} with  $\what A_{s}^{(*)}$, $\what B_{s}^{(*)}$, and $\what D_{s}^{(*)}$.
	Let $r_s:=\rank(\big(P_s^{(*)}\big)^{\T}\what D_s^{(*)})$ for $1\le s\le v$.
	Further,
	suppose\footnote {$\{f_{\theta}(\{P_s^{(i)}\})\}_{i=0}^{\infty}$ is guaranteed convergent for Gauss-Seidel-style updating by
		Theorem~\ref{thm:OMA-convg1}(b).}
	$\{f_{\theta}(\{P_s^{(i)}\})\}_{i=0}^{\infty}$ is convergent for Jacobi-style updating.
	\begin{enumerate}[{\rm (a)}]
		\item The entire sequence $\big\{\{\cR(P_s^{(i)})\}_{s=1}^v\big\}_{i=0}^{\infty}$ converges to $\{\cR(P_s^{(*)})\}_{s=1}^v$.
		\item If also  $r_s=k$ for all $1\le s\le k$, then $\big\{\{P_s^{(i)}\}_{s=1}^v\big\}_{i=0}^{\infty}$
		converges to $\{P_s^{(*)}\}_{s=1}^v$ (in the product of the standard Euclidean metric).
		\item In general, let $\big[P_s^{(*)}\big]^{\T}\what D_s^{(*)}=V_s\Sigma_sV_s^{\T}$ be the singular value decomposition such that the first
		$r_s$ diagonal entries are the nonsingular values, and define
		\begin{multline}\nonumber%\label{eq:bbP*}
		\bbP_s^{(*)}=\left\{P_s^{(*)}(V_s)_{(:,1:r_s)}(V_s)_{(:,1:r_s)}^{\T}\right. \\
		\left.   +P_s^{(*)}(V_s)_{(:,r_s+1:k)}W_s(V_s)_{(:,r_s+1:k)}^{\T}\,:\,W_s\in\bbO^{(k-r_s)\times (k-r_s)}\right\}.
		\end{multline}
		Then $\big\{\{P_s^{(i)}\}_{s=1}^v\big\}_{i=0}^{\infty}$
		converges to the product $\bbP_1^{(*)}\times\cdots\times\bbP_v^{(*)}$ of sets,
		in the sense that
		\begin{equation}\nonumber%\label{eq:OMA-Xi2what}
		\min_{P_s\in\bbP_s^{(*)}\,\,\forall s} \,\,\sum_s\|P_s^{(i)}-P_s\|_2\to 0\,\,\,\mbox{as $i\to\infty$}.
		\end{equation}
	\end{enumerate}
\end{theorem}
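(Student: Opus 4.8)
The plan is to lift the single-view convergence theory of Theorem~\ref{thm:convg2} to the product setting, using the expanded sequence $\big\{\{P_s^{(i)}\}_{s=1}^v\big\}_{i=0}^\infty$ (with the $v-1$ intermediate approximations inserted for Gauss--Seidel) so that consecutive terms differ in exactly one block. First I would establish the block-wise analogue of the ``shift along a convergent subsequence'' estimate: along any convergent subsequence $\{P_s^{(i)}\}_{s}\to\{P_s^{(*)}\}_s$, since $\{f_\theta(\{P_s^{(i)}\})\}$ is convergent (by Theorem~\ref{thm:OMA-convg1}(b) for Gauss--Seidel, by hypothesis for Jacobi), the per-step increase $f_\theta(\{P_s^{(i+1)}\})-f_\theta(\{P_s^{(i)}\})\to 0$. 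Because the single updated block $P_s^{(i)}\mapsto P_s^{(i+1)}$ is produced by one run of Algorithm~\ref{alg:SCF}, which by Theorem~\ref{thm:convg1} strictly increases the corresponding sub-objective \eqref{eq:OMA-sub-II''} whenever its iterate moves off the eigenspace, the vanishing of the increment forces $\tr\big([\widehat P_s^{(i)}]^{\T}E_s^{(*)}(P_s^{(i)})\widehat P_s^{(i)}\big)-\tr\big([P_s^{(i)}]^{\T}E_s^{(*)}(P_s^{(i)})P_s^{(i)}\big)\to 0$; combined with the eigenvalue gap assumption on $E_s^{(*)}(P_s^{(*)})$ and continuity of $E_s$, standard eigenspace perturbation (as in Theorem~\ref{thm:property-immediate}(b) and \cite{stew:2001a}) gives $\dist_2(\cR(P_s^{(i)}),\cR(P_s^{(i+1)}))\to 0$ along an infinite sub-subset, and summing over $s$ yields the hypothesis of Lemma~\ref{lem:OMA-isolatedconvg} in the product metric \eqref{eq:OMA-sinTheta}. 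Applying that lemma proves part~(a): the entire sequence $\big\{\{\cR(P_s^{(i)})\}_{s=1}^v\big\}$ converges to $\{\cR(P_s^{(*)})\}_{s=1}^v$.

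For part~(b), once the subspaces $\cR(P_s^{(i)})$ converge to $\cR(P_s^{(*)})$, I would argue exactly as in the proof of Theorem~\ref{thm:convg2}(b): when $r_s=\rank\big([P_s^{(*)}]^{\T}\widehat D_s^{(*)}\big)=k$, the maximizer of the sub-problem \eqref{eq:OMA-sub-II''} over each fixed subspace is \emph{unique} by Corollary~\ref{cor:Qopt-form} and Theorem~\ref{thm:maximizer-decomp'} (since $\widehat D_s^{(*)}$ depends continuously on the other blocks, which themselves converge, the relevant SVD factor $V_s$ and hence the unique representative vary continuously). Thus within each block the iterates are pinned down to a single point $P_s^{(*)}V_s V_s^{\T}$ rescaled appropriately; continuity of the construction in Theorem~\ref{thm:maximizer-decomp'} together with $\cR(P_s^{(i)})\to\cR(P_s^{(*)})$ and $\rank\big([P_s^{(i)}]^{\T}\widehat D_s\big)=k$ for large $i$ (rank is lower semicontinuous, so it cannot drop below $k$) forces $P_s^{(i)}\to P_s^{(*)}$ in the Euclidean metric, giving convergence of the full product sequence.

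For part~(c), the rank-deficient case, I would again mirror Theorem~\ref{thm:convg2}(c): by Theorem~\ref{thm:maximizer-decomp'} the maximizer of \eqref{eq:OMA-sub-II''} over the limiting subspace $\cR(P_s^{(*)})$ is not a single point but the set $\bbP_s^{(*)}$, parametrized by the free orthogonal factor $W_s\in\bbO^{(k-r_s)\times(k-r_s)}$ on the kernel of $[P_s^{(*)}]^{\T}\widehat D_s^{(*)}$. Since part~(a) already gives $\cR(P_s^{(i)})\to\cR(P_s^{(*)})$ for every $s$, and since line~4--5 of Algorithm~\ref{alg:SCF} always places $[P_s^{(i)}]^{\T}\widehat D_s\succeq 0$ with $\tr([P_s^{(i)}]^{\T}\widehat D_s)=\|[P_s^{(i)}]^{\T}\widehat D_s\|_{\tr}$ (Theorem~\ref{thm:property-immediate}(a)), the accumulation points of $\{P_s^{(i)}\}$ must lie in the correspondingly constructed set $\bbP_s^{(*)}$; a continuity/compactness argument on each finite-dimensional Stiefel manifold then yields $\min_{P_s\in\bbP_s^{(*)}}\sum_s\|P_s^{(i)}-P_s\|_2\to 0$, the product of the sets $\bbP_1^{(*)}\times\cdots\times\bbP_v^{(*)}$ being exactly the limit set. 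I expect the \textbf{main obstacle} to be part~(a) in the Jacobi case: the proof of the per-step estimate leans on monotonicity of $\{f_\theta(\{P_s^{(i)}\})\}$ through Theorem~\ref{thm:convg1}, and for Jacobi-style updating monotonicity is not guaranteed, so the argument must instead start from the \emph{assumed} convergence of the objective sequence and carefully extract the vanishing of each block's sub-objective increment despite all $v$ blocks changing simultaneously between $\{P_s^{(i)}\}$ and $\{P_s^{(i+1)}\}$ — this requires bounding cross terms using the continuity of $\widehat A_s,\widehat B_s,\widehat D_s$ in the off-block variables and the fact that all blocks' subspaces are (by part~(a), bootstrapped) moving less and less.
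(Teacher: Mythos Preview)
The paper does not include a proof of this theorem (nor of the single-view Theorem~\ref{thm:convg2} on which it rests); the placement of Lemma~\ref{lem:OMA-isolatedconvg} immediately before the statement makes clear that the intended argument is precisely the product-manifold lift of the proof of Theorem~\ref{thm:convg2} via that lemma, which is exactly what you propose. Your outline for Gauss--Seidel and for parts~(b) and~(c) matches this implied approach.

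One imprecision worth tightening: in Algorithm~\ref{alg:OMA-theta} each block update runs Algorithm~\ref{alg:SCF} to termination, so $P_s^{(i+1)}$ is already a fixed point of the inner SCF and hence $\cR(P_s^{(i+1)})$ is the top-$k$ eigenspace of $E_s(P_s^{(i+1)})$ with the current off-block parameters---it is not a single SCF step as your notation $\what P_s^{(i)}$ suggests. For Gauss--Seidel this actually streamlines part~(a): along a further subsequence where a fixed block $s$ is updated, $\cR(P_s^{(i+1)})$ converges to the top-$k$ eigenspace of $E_s^{(*)}(P_s^{(*)})$ directly by eigenspace continuity under the gap, while $\cR(P_s^{(i)})$ converges to the same limit by the increment-vanishing argument (the product analogue of Theorem~\ref{thm:convg1}(d)); the distance between them therefore tends to zero and Lemma~\ref{lem:OMA-isolatedconvg} applies.

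Your identification of the Jacobi case as the main obstacle is correct, and your proposed fix is, as you suspect, circular: you cannot invoke part~(a) while proving it. The difficulty is genuine. With all blocks updated simultaneously, the assumed convergence of $f_\theta(\{P_s^{(i)}\})$ gives only $f_\theta(\{P_{s'}^{(*)}\}_{s'\ne s},Q_s)\ge f_\theta(\{P_s^{(*)}\})$ for each $s$ (from monotonicity of the inner Algorithm~\ref{alg:SCF}) together with $f_\theta(\{Q_s\})=f_\theta(\{P_s^{(*)}\})$, where $\{Q_s\}$ is a subsequential limit of $\{P_s^{(i+1)}\}$; closing the gap to $\cR(Q_s)=\cR(P_s^{(*)})$ requires an additional argument (for instance, that $\{P_s^{(i-1)}\}$ also accumulates at $\{P_s^{(*)}\}$ so that $E_s^{(i-1)}\to E_s^{(*)}$ and $\cR(P_s^{(*)})$ is the top-$k$ eigenspace of $E_s^{(*)}(P_s^{(*)})$). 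The paper is silent on how this is resolved.
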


\section{Numerical Experiments}\label{sec:NumExp}
In this section, we will perform two sets of numerical experiments. The first set  demonstrates
the basic behavior of our proposed SCF iteration in Algorithm~\ref{alg:SCF} for problem \eqref{eq:main-op} on synthetic problems,
and  the second set  demonstrates the effectiveness of our multi-view subspace learning model \eqref{eq:OMA} solved by
alternating iterations in Algorithm~\ref{alg:OMA-theta} which uses Algorithm~\ref{alg:SCF} as its computational workhorse.
We compare ours against
the state-of-the-art methods in machine learning for multi-view feature extraction
on five real world data sets. All experiments were conducted by on MATLAB (2018a)
on an Mac laptop using macOS Mojave with Intel Core i9 CPU (2.9 GHz) and 32 GB memory.

\begin{figure}[ht]
	\begin{tabular}{@{}c@{}c@{}c@{}c@{}}
		\includegraphics[width=0.25\textwidth]{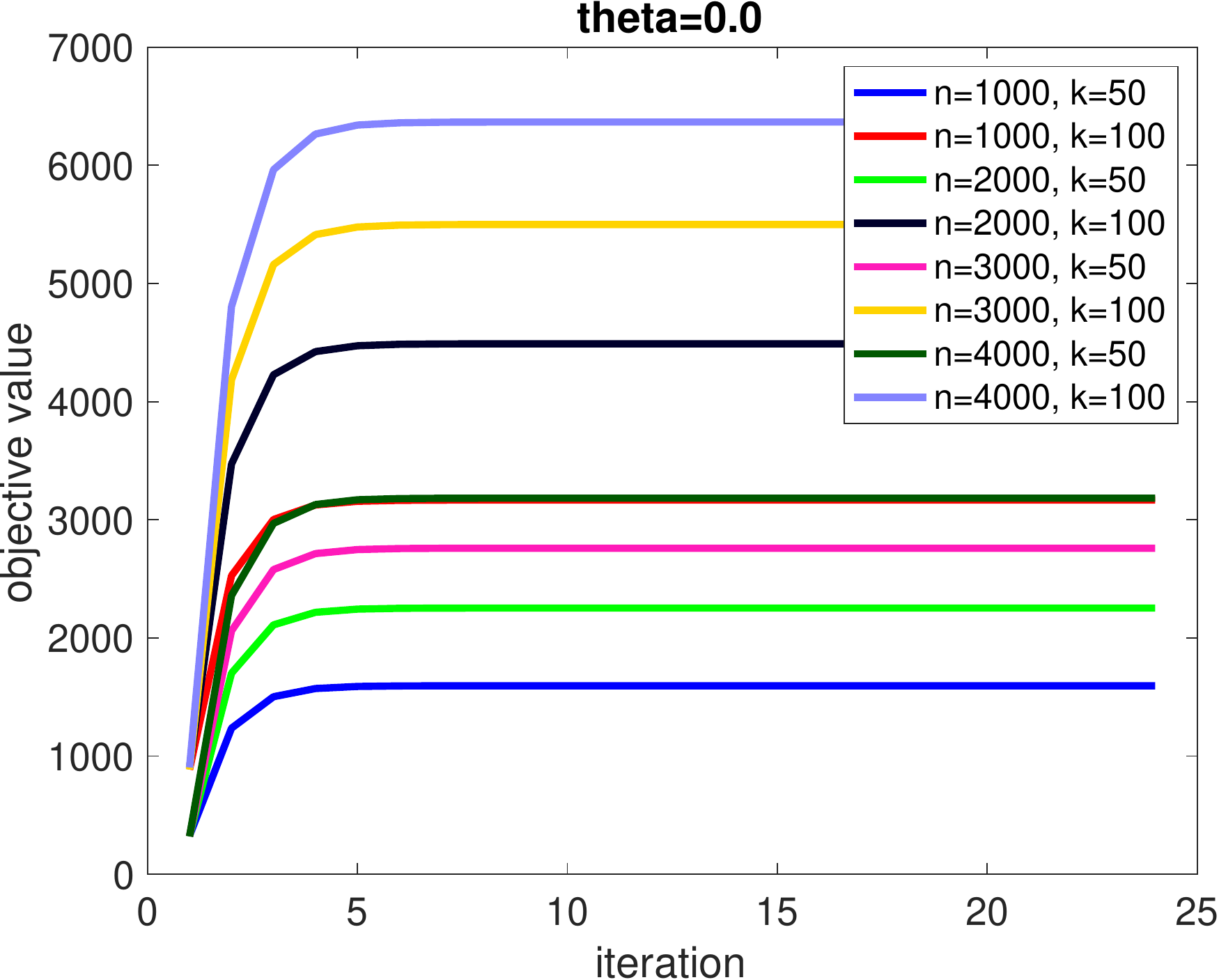} &
		\includegraphics[width=0.25\textwidth]{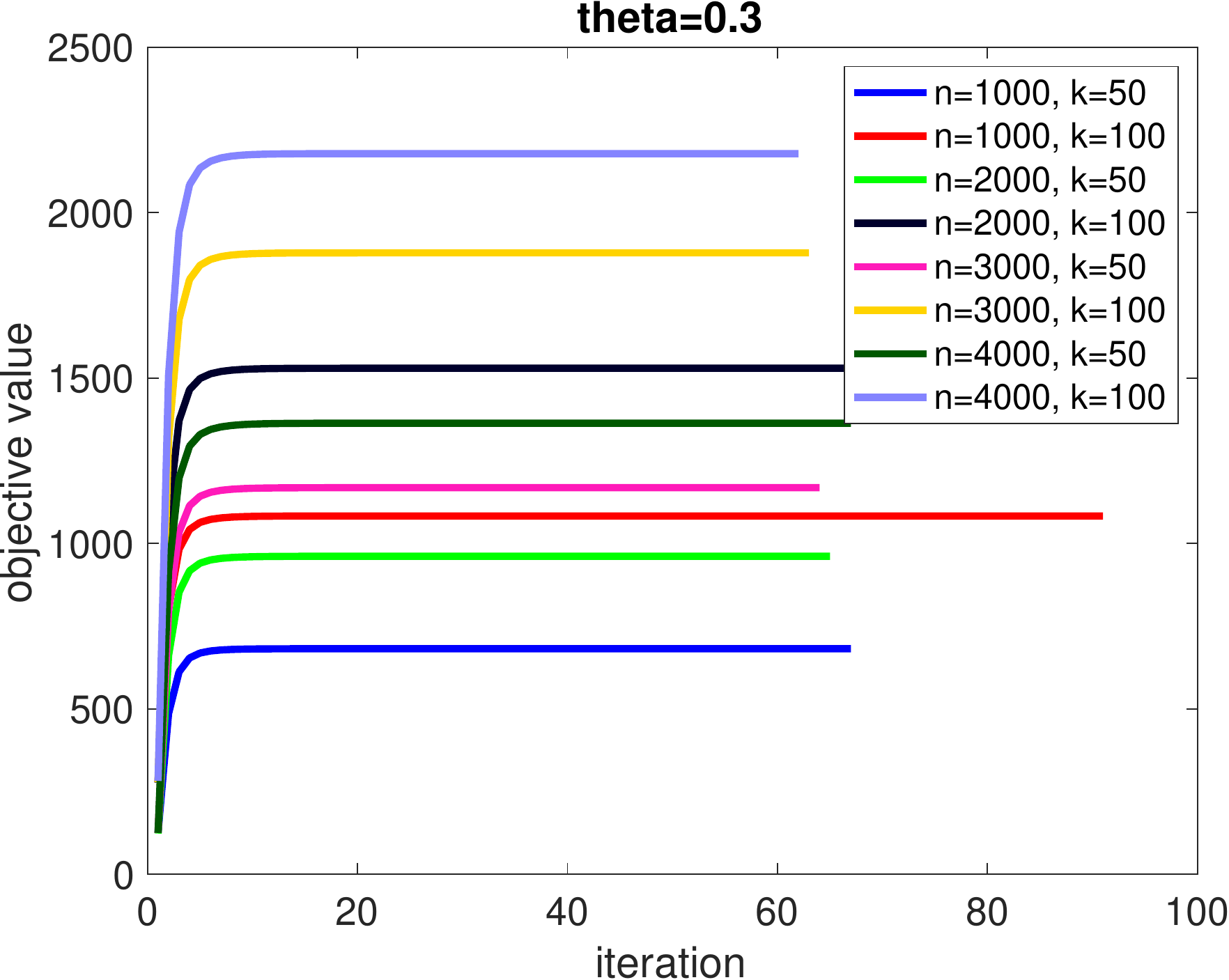}  &
		\includegraphics[width=0.25\textwidth]{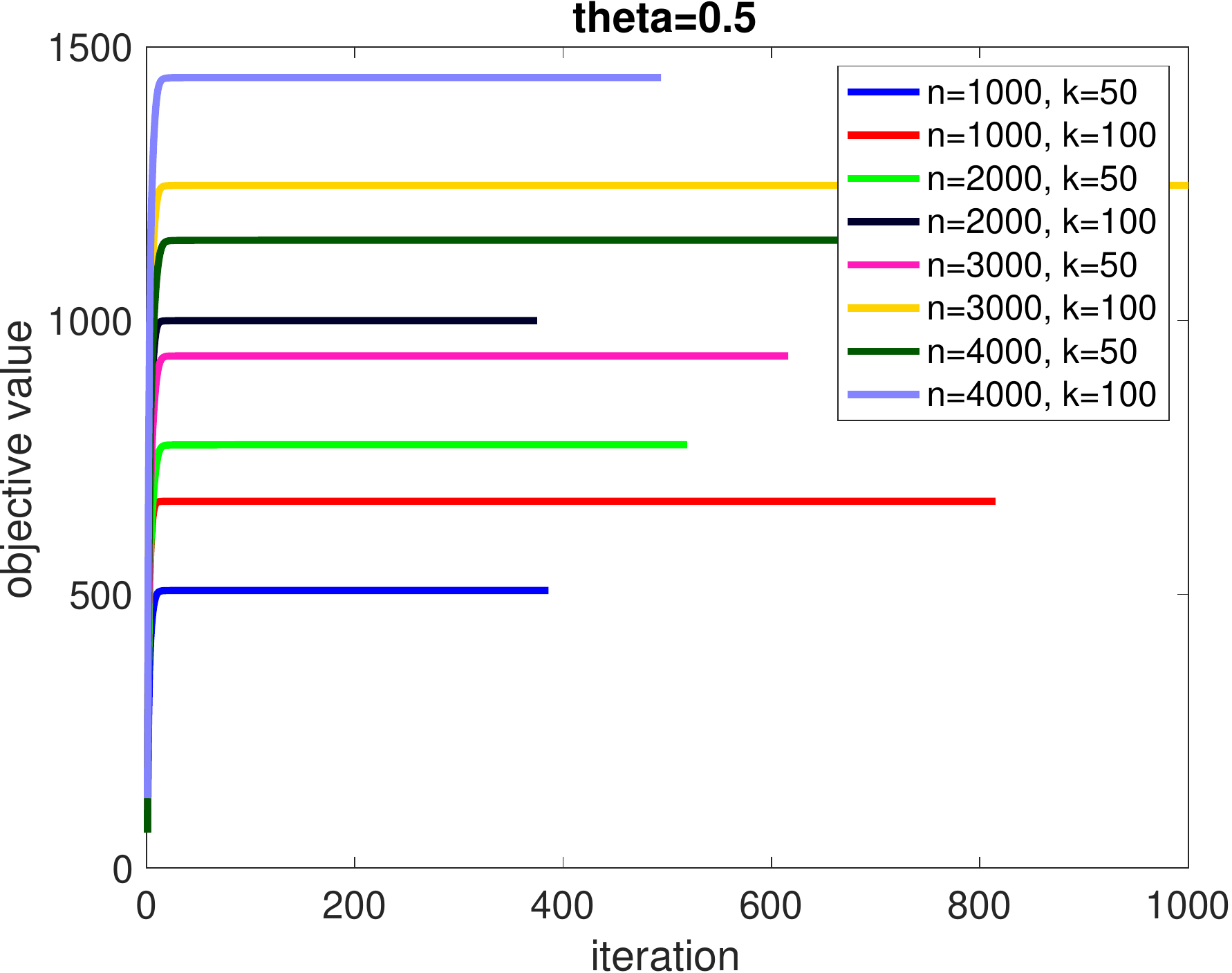}  &
		\includegraphics[width=0.25\textwidth]{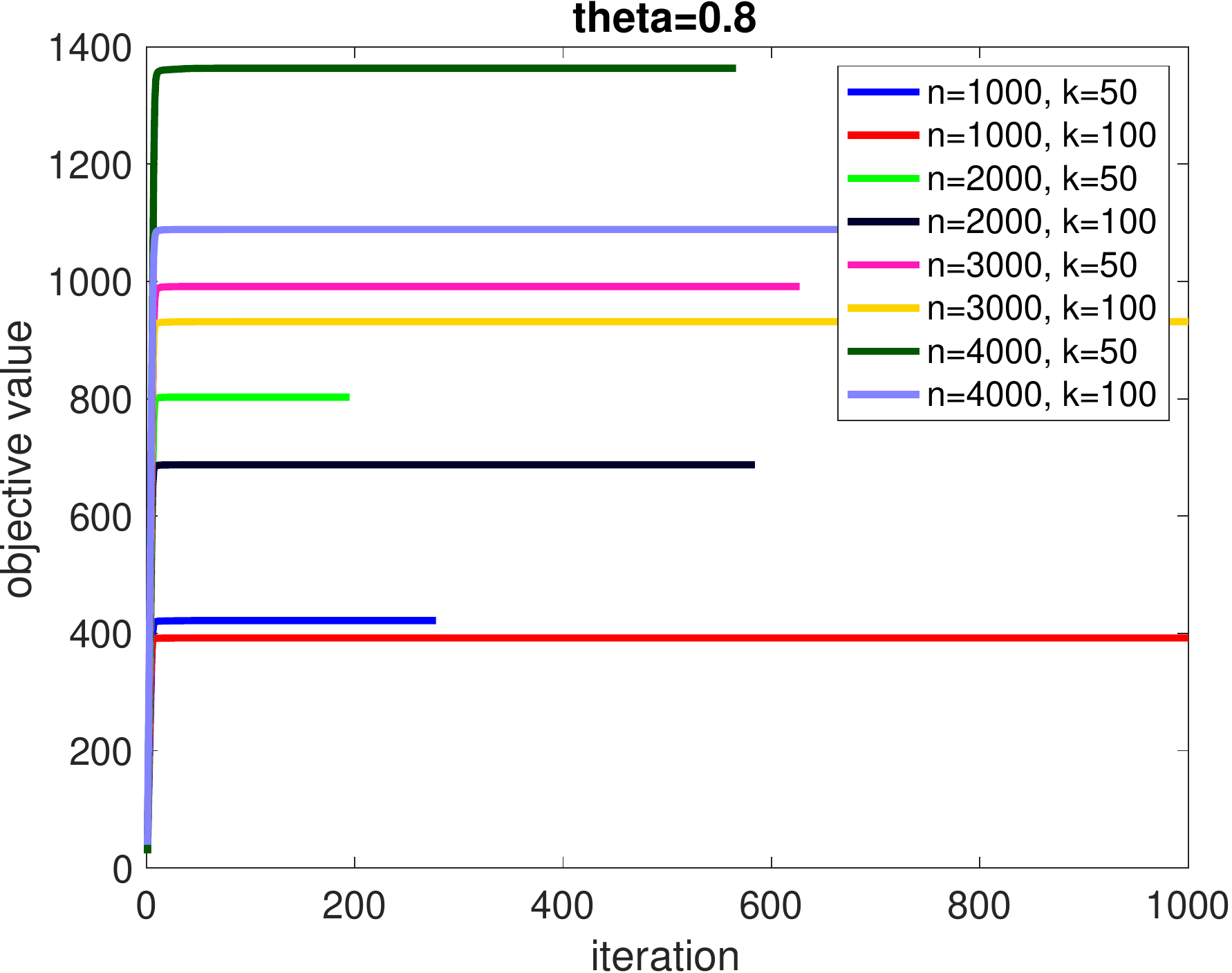}  \\
		\includegraphics[width=0.25\textwidth]{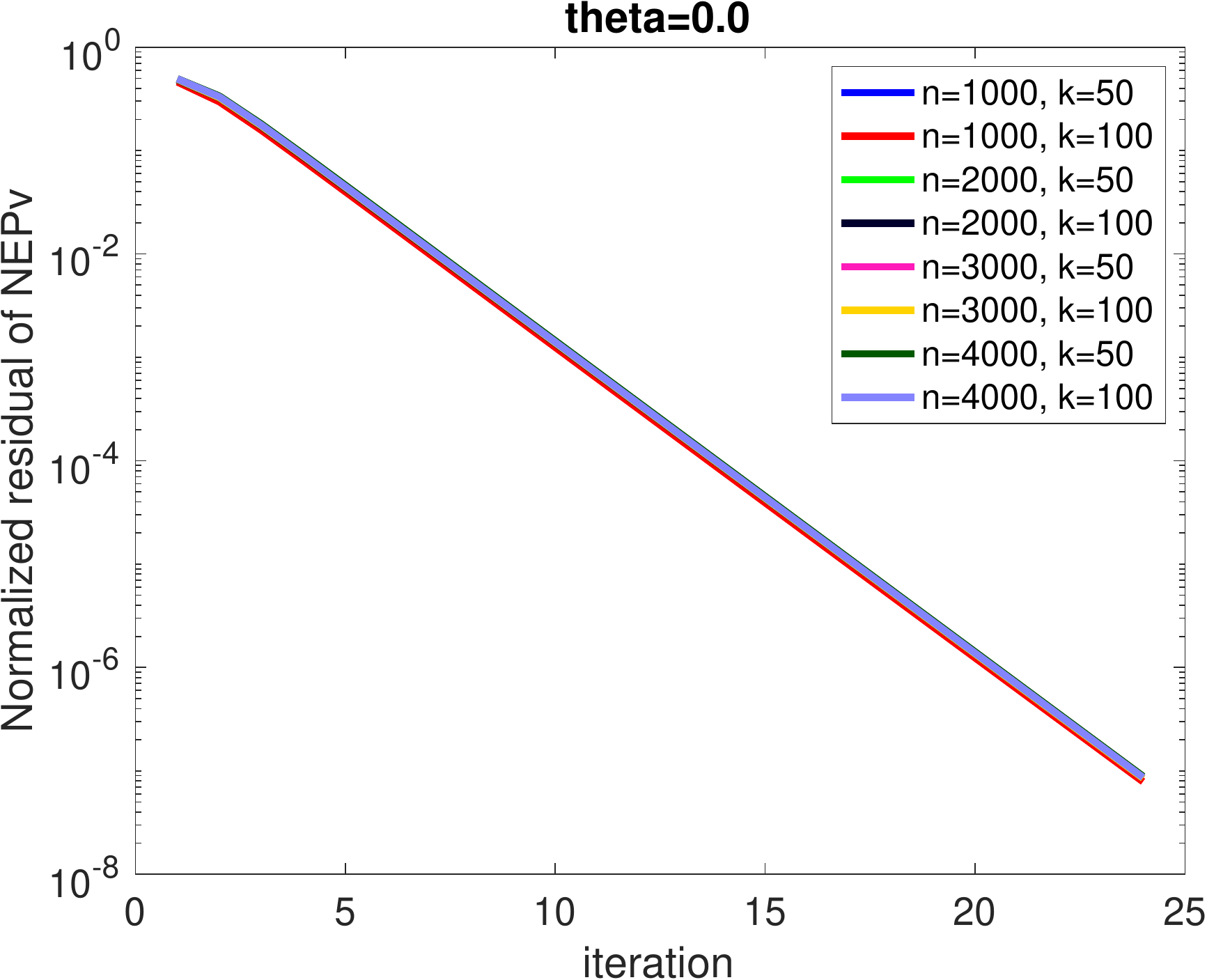} &
		\includegraphics[width=0.25\textwidth]{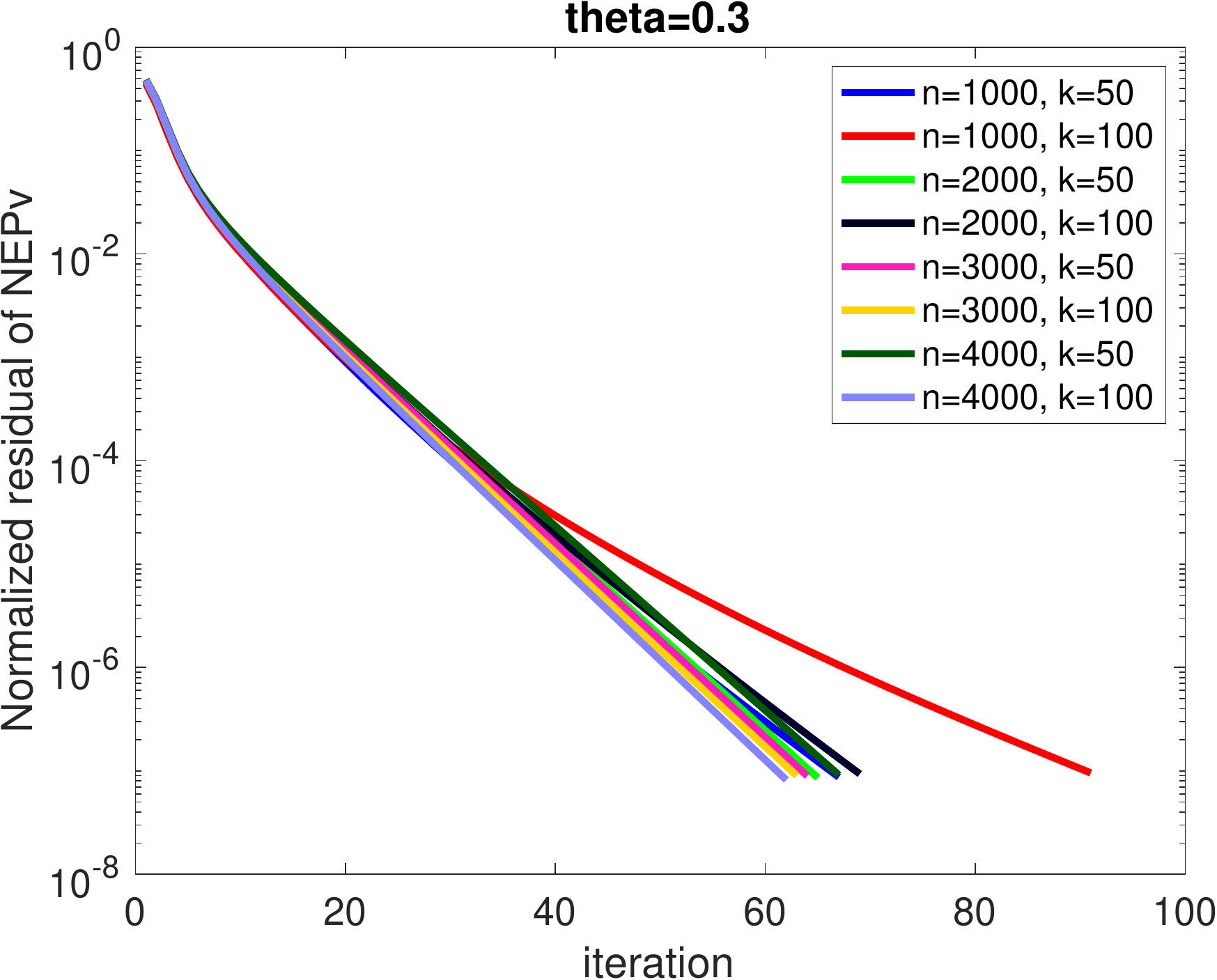}  &
		\includegraphics[width=0.25\textwidth]{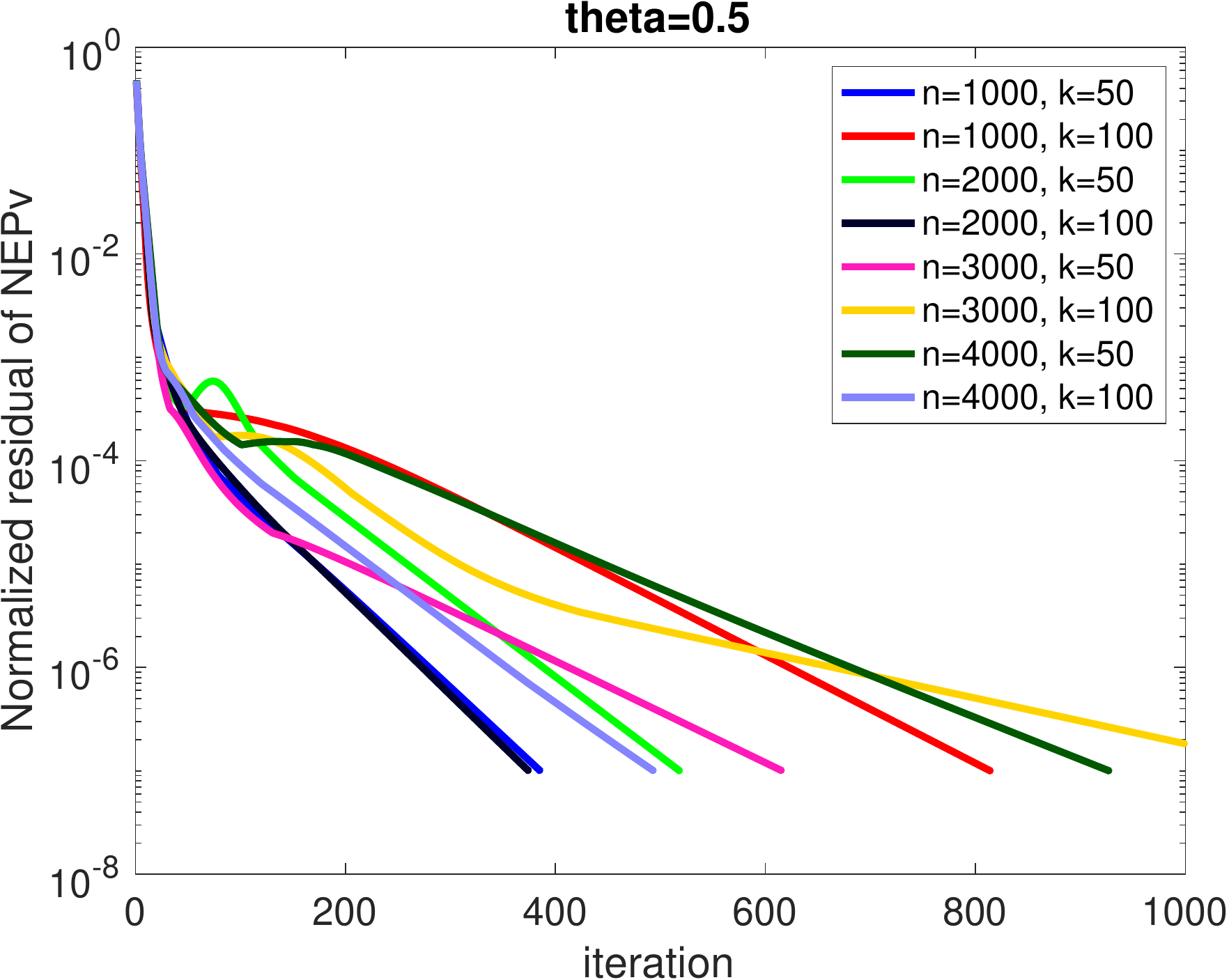}  &
		\includegraphics[width=0.25\textwidth]{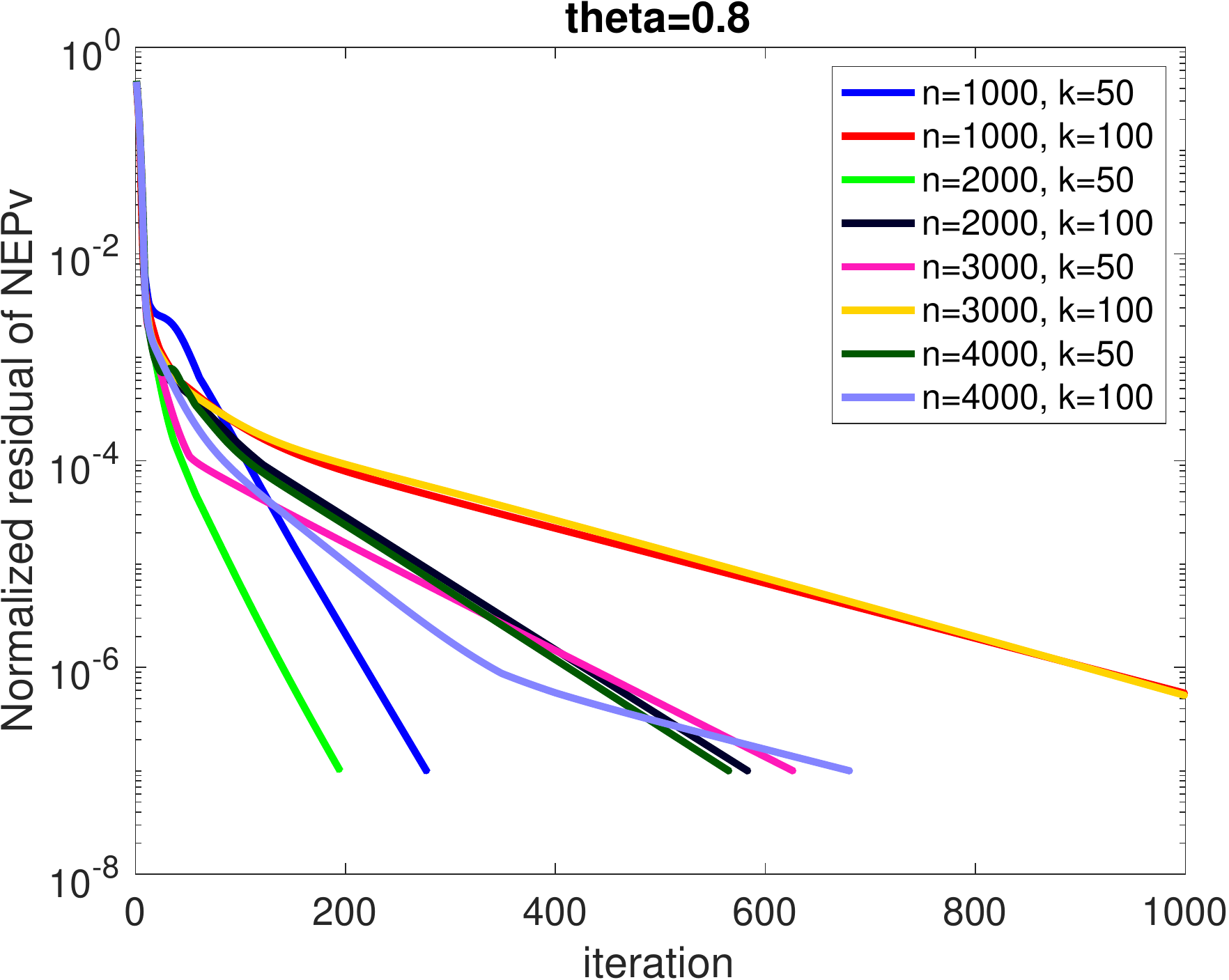}
	\end{tabular}
	\caption{Convergence curves of the objective value and the normalized NEPv residual by Algorithm~\ref{alg:SCF}
		on 8 synthetic problems for $\theta \in \{0, 0.3, 0.5, 0.8\}$. } \label{fig:obj-gnorm}
\end{figure}

\subsection{Experiments on Synthetic Problems}
We first report numerical results on problem \eqref{eq:main-op} solved by our proposed SCF iteration
on synthetic problems, where matrices $A, B$ and $D$ are randomly generated with varying $n \in [1000, 4000]$
and $k \in \{50, 100\}$. Specifically, for a given pair $(n, k)$, matrix $A$ is synthesized by MATLAB
as
\begin{verbatim}
X = randn(n, n); X = (X+X')./2; [U,~] = eig(X);
v = rand(n,1) + 1e-6; A = U * diag(v) * U'; D = randn(n, k);
\end{verbatim}
and $B$ is generated similarly to $A$.  With an increase of $n$ by $1000$ in the given interval,
we generated $8$ synthetic problems in total. Also varying $\theta \in [0,1]$ with an increase by $0.1$,
we tested Algorithm~\ref{alg:SCF} on a total of $88$ problems \eqref{eq:main-op}.
The stopping tolerance ${\tt tol}=10^{-7}$ in \eqref{eq:stop} and the maximum number of iterations is set at $10^3$.
We report convergence curves of both objective function value and the normalized NEPv residual as defined on the left-hand side
of \eqref{eq:stop}.

\begin{figure}[t]
	\centering
	\begin{tabular}{@{}cc@{}}
		\includegraphics[width=0.45\textwidth]{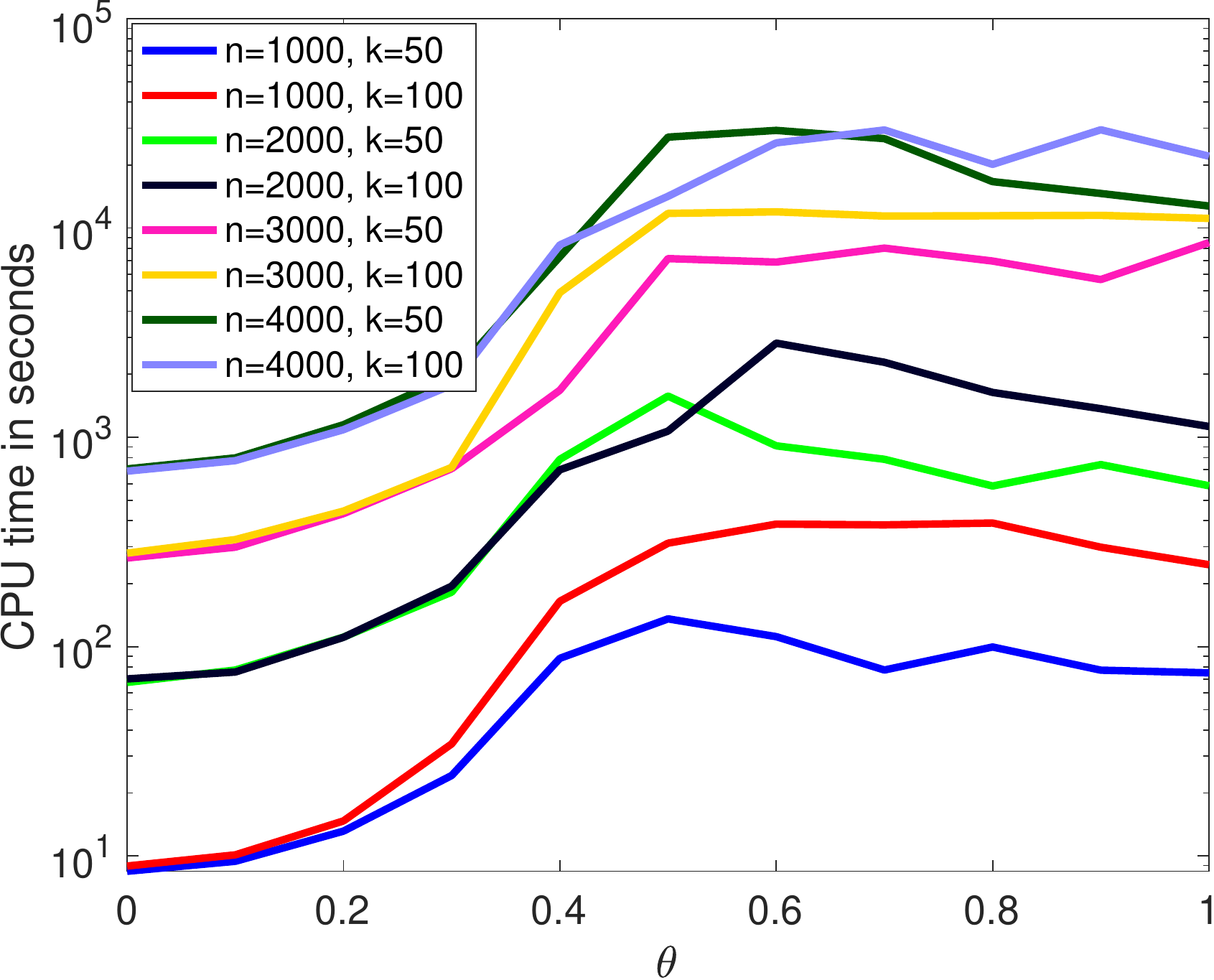} &
		\includegraphics[width=0.45\textwidth]{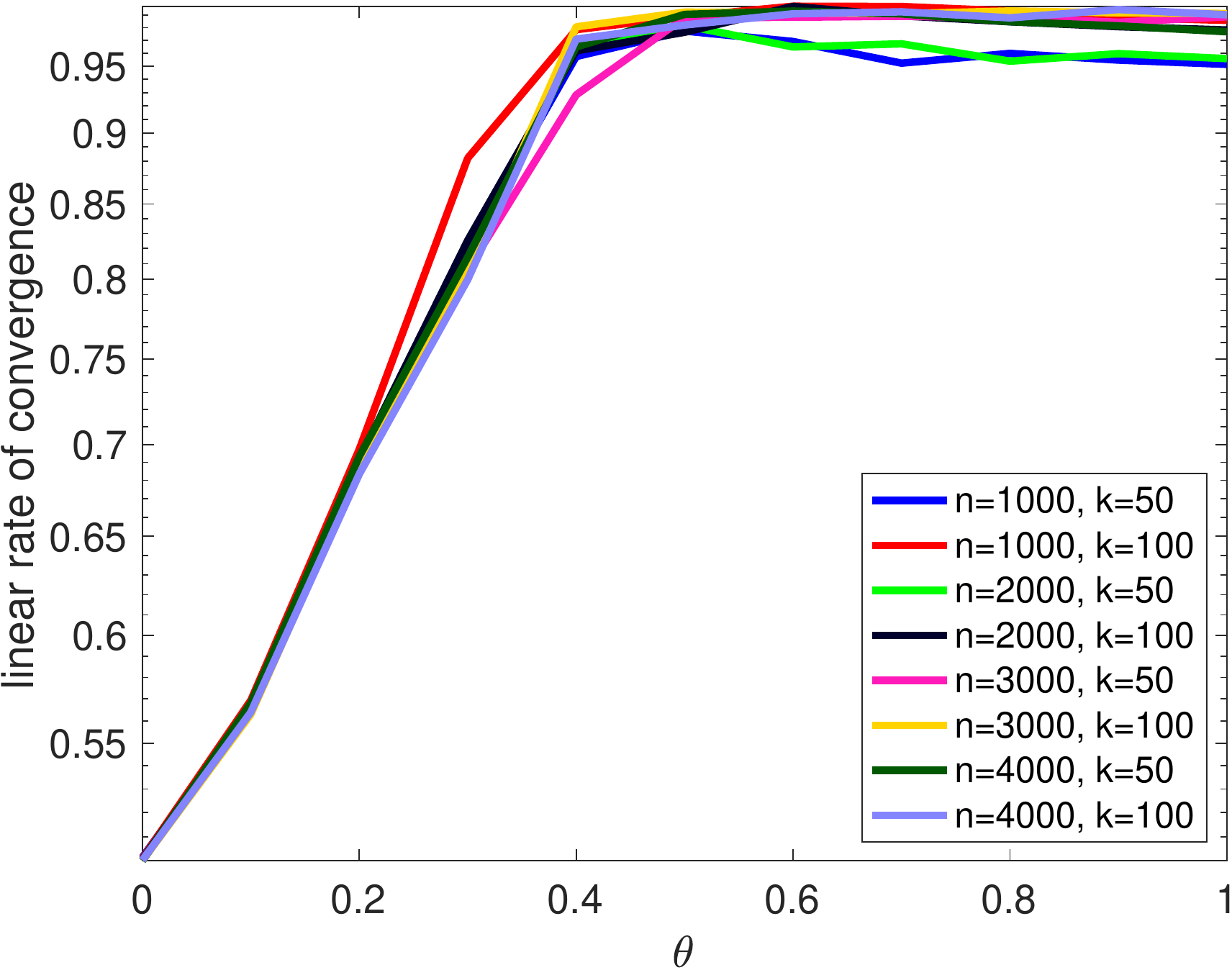}
	\end{tabular}
	\caption{CPU time and the estimated linear convergence rate by  Algorithm~\ref{alg:SCF}
		on 8 synthetic problems for $\theta \in [0,1]$.} \label{fig:cpu}
\end{figure}

Figure~\ref{fig:obj-gnorm} displays the convergence behaviors of Algorithm~\ref{alg:SCF} on $8$ synthetic problems
for selected $\theta \in \{0, 0.3, 0.5, 0.8\}$. As can be observed, most of the curves of the normalized NEPv residual reach the
preset tolerance ${\tt tol}=10^{-7}$ much earlier than the maximum number of iterations.
For these synthetic problems, fewer numbers of iterations are required for smaller $\theta$ than larger ones.
We point out that  the tolerance $10^{-7}$ is often too tiny in machine learning applications. Observe that
in Figure~\ref{fig:obj-gnorm} all objective value curves are very much flat in fewer than 50 SCF iterations.

Figure~\ref{fig:cpu} plots
the CPU times by applying Algorithm~\ref{alg:SCF} on $8$ synthetic problems as $\theta$ varies. These times
are well correlated with the size  $n$. The larger $n$ is, the more CPU time is consumed. For $\theta < 0.2$,
the CPU times are comparable for all problems. As $\theta$ becomes large, more CPU times are consumed for the same $(n,k)$.
This observation is consistent with our estimated rates of linear convergence, which are always under $1$
(demonstrating always convergence) but increase
as $\theta$ does for those synthetic problems (demonstrating more iterations are needed for a larger $\theta$ than a smaller one).
We caution the reader that in general, the rate of linear convergence by Algorithm~\ref{alg:SCF}  is
unlikely  an increasing function of $\theta$ for given $A$, $B$, and $D$.

\subsection{Experiments on Multi-view Data for Feature Extraction}\label{ssec:egs4MvL}
We will specialize the blocks of $A$ and $B$ in \eqref{eq:AB4MvSL} according
to supervised multi-view subspace
learning models including GMA \cite{sharma2012generalized}, MLDA \cite{sun2015multiview} and MvMDA \cite{cao2017generalized}
as detailed in section~\ref{sec:multiview-related}. The resulting concrete models
under the formulation (\ref{eq:OMA}) solved by either Jacobi-style or Gauss-Seidel-style updating scheme
outlined in Algorithm~\ref{alg:OMA-theta} will be named with a suffix ``-J'' or ``-G'' and a prefix ``O'' for ``Orthogonal'' (as
previously in OCCA \cite{zhwb:2020}).
For example, OGMA-J and OGMA-G are (\ref{eq:OMA}) with $A$ and $B$ the same as the choices in GMA and solved by Algorithm~\ref{alg:OMA-theta}
with Jacobi-style and Gauss-Seidel-style updating schemes, respectively.

We evaluate the model (\ref{eq:OMA}) for multi-view feature extraction in machine learning.
Five datasets in Table~\ref{tab:datasets} are used to evaluate the performance of the proposed six concrete models:
OGMA-G, OGMA-J, OMLDA-G, OMLDA-J, OMvMDA-G, and OMvMDA-J, in terms of multi-view feature extraction by comparing them with
their baseline counterparts: GMA, MLDA and MvMDA.
We apply various feature descriptors to extract features of views, including CENTRIST \cite{wu2008place}, GIST \cite{oliva2001modeling},
LBP \cite{ojala2002multiresolution}, histogram of oriented gradient (HOG), color histogram (CH),
and SIFT-SPM \cite{lazebnik2006beyond},
from image datasets: Caltech101 \cite{fei2007learning} and
Scene15 \cite{lazebnik2006beyond}.
Multiple Features (mfeat) and Internet Advertisements (Ads) are publicly available from the UCI machine learning repository \cite{Dua:2019}.
Dataset mfeat  contains handwritten numeral
data with six views
including profile correlations (fac), Fourier coefficients of the character shapes (fou), Karhunen-Love coefficients (kar), morphological features (mor),
pixel averages in $2 \times 3$ windows (pix), and Zernike moments (zer).
Ads is used to predict whether or not a given hyperlink (associated with an image) is an advertisement and has three views: features based on the terms in the images URL, caption, and alt text (url+alt+caption), features based on the terms in the URL of the current site (origurl), and  features based on the terms in the anchor URL (ancurl).

Except for MvMDA and its new variants: OMvMDA-G and OMvMDA-J, all other methods share the same trade-off parameter
$\alpha$ to balance the pairwise correlations and supervised information.
In our experiments, we tune $\alpha \in \{0.01, 0.1, 1, 10, 100\}$ for  proper balancing in supervised setting.
To prevent possible singularity in matrix $B$, we add a small value, e.g., $10^{-8}$, to the diagonals of $B_s\,\forall s$.
For our proposed methods, an additional parameter $\theta$ is varied from $0$ to $1$
with an increase of $0.1$. We also set the maximum number of iterations to $50$ for both the SCF iteration
of Algorithm~\ref{alg:SCF} and the Jacobi-style or Gauss-Seidel-style updating of Algorithm~\ref{alg:OMA-theta}.
It is more of an empirical threshhold observed as a good enough setting for multi-view feature extraction.

\begin{table}[h]
	\caption{\small Real world data sets, where the number of features
		for each view is shown inside the bracket and `-' for  views not applicable.}  \label{tab:datasets}
	\centering
	\begin{footnotesize}
		\begin{tabular}{@{}c|c|c|c|c|c@{}}
			\hline
			& mfeat & Caltech101-7 & Caltech101-20 & Scene15  & Ads \\ \hline
			samples & 2000 & 1474 & 2386 & 4310 & 3279\\
			classes & 10 & 7 & 20 & 15 & 2\\
			view 1 & fac(216) & CENTRIST(254) &   CENTRIST(254) & CENTRIST(254) & url+alt+caption(588) \\
			view 2 &  fou(76) & GIST(512) &GIST(512) & GIST(512) & origurl(495)\\
			view 3 & kar(64) &LBP(1180) & LBP(1180) &LBP(531) & ancurl(472) \\
			view 4 &  mor(6) & HOG(1008) & HOG(1008) & HOG(360) & -\\
			view 5 & pix(240) &CH(64) & CH(64) & SIFT-SPM(1000) &  -\\
			view 6 & zer(47) & SIFT-SPM(1000) &SIFT-SPM(1000) &- &  -\\ \hline
		\end{tabular}
	\end{footnotesize}
\end{table}

To evaluate the classification performance of compared methods, the 1-nearest neighbor classifier as the base classifier is employed.
We run each method to learn projection matrices by varying the dimension of the common subspace $k \in \{2, 3, 5:5:30\}$
for all datasets except $k \in \{2, 3, 4, 5, 6\}$ for  mfeat due to its smallest view {\tt mor} having only $6$ features.
We split each dataset into training and testing with  ratio 10/90.
The learned projection matrices are used to transform both training and testing data into the latent common space, and then
the classifier is trained and tested in this space. Following  \cite{zhwb:2020}, the serial feature fusion strategy
is employed by concatenating projected features from all views.
Classification accuracy is used to measure  learning performance.
Experimental results are reported in terms of the average and standard deviation over 10 randomly drawn splits.

\begin{table}[!ht]
	\setlength{\tabcolsep}{4pt}
	\caption{\small Classification accuracy ($\pm$ standard deviation) of multi-view feature extraction on five real world data sets with 10\% training and 90\% testing over 10 random splits. The best $\theta$ is shown in the bracket.} \label{tab:accuracy}
	\centering
	\begin{footnotesize}
		\begin{tabular}{@{}lccccc@{}}
			\hline
			methods& mfeat& Caltech101-7& Caltech101-20& Scene15& Ads\\
			\hline
			GMA& 93.99 $\pm$ 0.87~~~~~~& 93.25 $\pm$ 1.04~~~~~~& 81.16 $\pm$ 0.94~~~~~~& 61.41 $\pm$ 1.30~~~~~~& 92.59 $\pm$ 1.76~~~~~~\\
			OGMA-J& 96.81 $\pm$ 0.46{\tiny (0.4)}& 95.14 $\pm$ 0.59{\tiny (0.4)}& 86.48 $\pm$ 1.02{\tiny (0.6)}& 79.90 $\pm$ 0.80{\tiny (1.0)}& 94.69 $\pm$ 0.75{\tiny (0.8)}\\
			OGMA-G& 96.80 $\pm$ 0.44{\tiny (0.4)}& 95.07 $\pm$ 0.56{\tiny (0.5)}& 86.60 $\pm$ 1.11{\tiny (0.5)}& 79.90 $\pm$ 1.02{\tiny (1.0)}& 94.91 $\pm$ 0.67{\tiny (0.8)}\\
			\hline
			MLDA& 92.01 $\pm$ 1.74~~~~~~& 92.18 $\pm$ 0.95~~~~~~& 77.79 $\pm$ 1.01~~~~~~& 59.02 $\pm$ 0.94~~~~~~& 92.50 $\pm$ 2.06~~~~~~\\
			OMLDA-J& 96.74 $\pm$ 0.40{\tiny (0.8)}& 94.68 $\pm$ 0.48{\tiny (0.8)}& 86.23 $\pm$ 1.16{\tiny (0.9)}& 81.42 $\pm$ 1.07{\tiny (1.0)}& 94.79 $\pm$ 0.65{\tiny (0.8)}\\
			OMLDA-G& 96.82 $\pm$ 0.38{\tiny (0.8)}& 94.59 $\pm$ 0.62{\tiny (0.3)}& 86.09 $\pm$ 1.22{\tiny (0.9)}& 80.68 $\pm$ 0.88{\tiny (1.0)}& 94.76 $\pm$ 0.74{\tiny (0.8)}\\
			\hline
			MvMDA& 93.78 $\pm$ 0.91~~~~~~& 92.14 $\pm$ 0.68~~~~~~& 79.27 $\pm$ 1.71~~~~~~& 57.33 $\pm$ 1.18~~~~~~& 78.51 $\pm$ 2.96~~~~~~\\
			OMvMDA-J& 96.62 $\pm$ 0.31{\tiny (0.5)}& 95.11 $\pm$ 0.72{\tiny (0.4)}& 85.69 $\pm$ 0.87{\tiny (0.4)}& 77.98 $\pm$ 1.24{\tiny (0.9)}& 94.02 $\pm$ 1.54{\tiny (0.6)}\\
			OMvMDA-G& 96.63 $\pm$ 0.37{\tiny (0.4)}& 94.95 $\pm$ 0.56{\tiny (0.5)}& 85.76 $\pm$ 1.00{\tiny (0.4)}& 78.07 $\pm$ 0.83{\tiny (0.9)}& 93.52 $\pm$ 0.59{\tiny (0.0)}\\
			\hline
		\end{tabular}
	\end{footnotesize}
\end{table}

Table \ref{tab:accuracy} shows the classification accuracies and standard deviations of $9$ multi-view feature extraction methods
on five real data sets over 10 random splits with 10\% training and 90\% testing.
We have observed the following:
\begin{enumerate}[i)]
	\item Our proposed methods consistently outperform their counterparts on all five data sets.
	The least improvement by about 2\% on Ads is observed, while the largest improvement occurs on Scene15 for about 20\%.
	\item The Jacobi-style and Gauss-Seidel-style updating schemes on the same model
	achieve similar classification accuracies with differences falling within 0.8\%.
	\item MvMDA on Ads fails to produce a proper latent representation since its accuracy
	is 14\% less than those of GMA and MLDA. However,  OMvMDA-J and OMvMDA-G perform very well on the same data set.
	\item Accuracies by all 6 proposed models are comparable and very good.
	This may be due to our trace ratio formulation with its orthogonal projection matrices for noisy robustness \cite{zhwb:2020}
	as well as the varying $\theta$ for weighting.
\end{enumerate}

\begin{figure}
	\begin{tabular}{|c|c|c|c|}
		\hline
		& Caltech101-7 & Scene15 & Ads\\ \hline
		\rotatebox[origin=c]{90}{varying $\theta$}&
		\raisebox{-.5\height}{\includegraphics[width=0.29\textwidth]{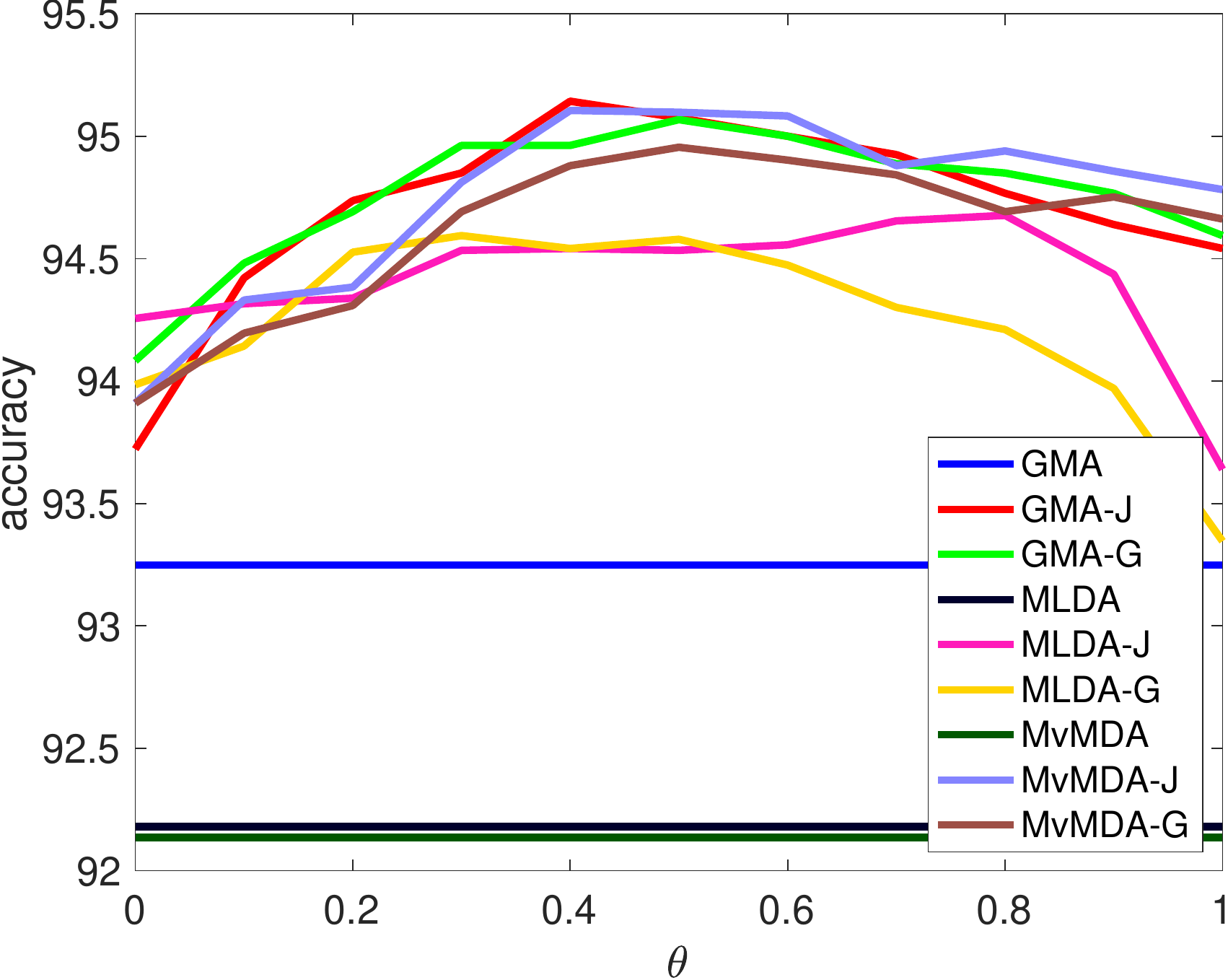}} &
		\raisebox{-.5\height}{\includegraphics[width=0.29\textwidth]{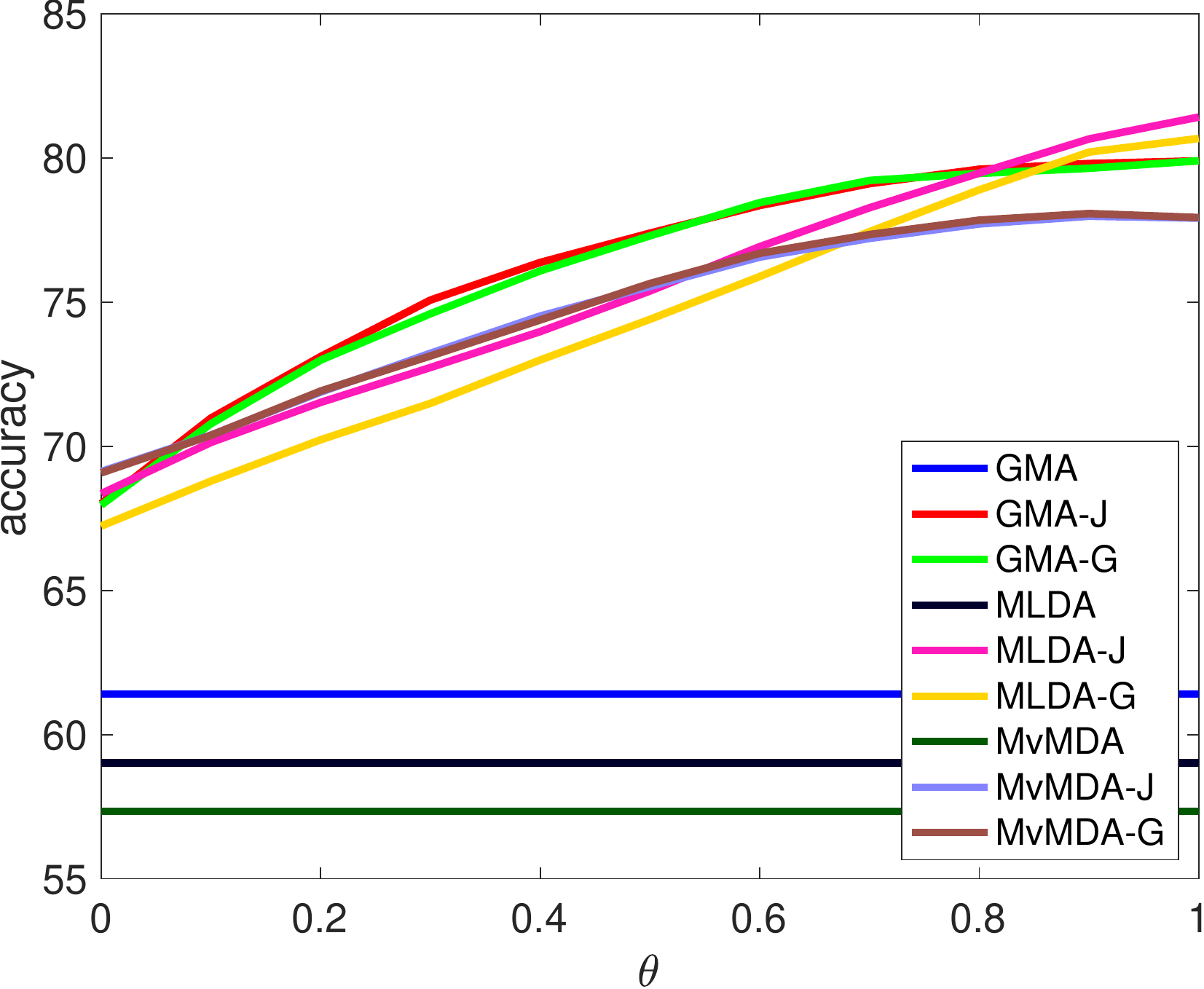}} &
		\raisebox{-.5\height}{\includegraphics[width=0.29\textwidth]{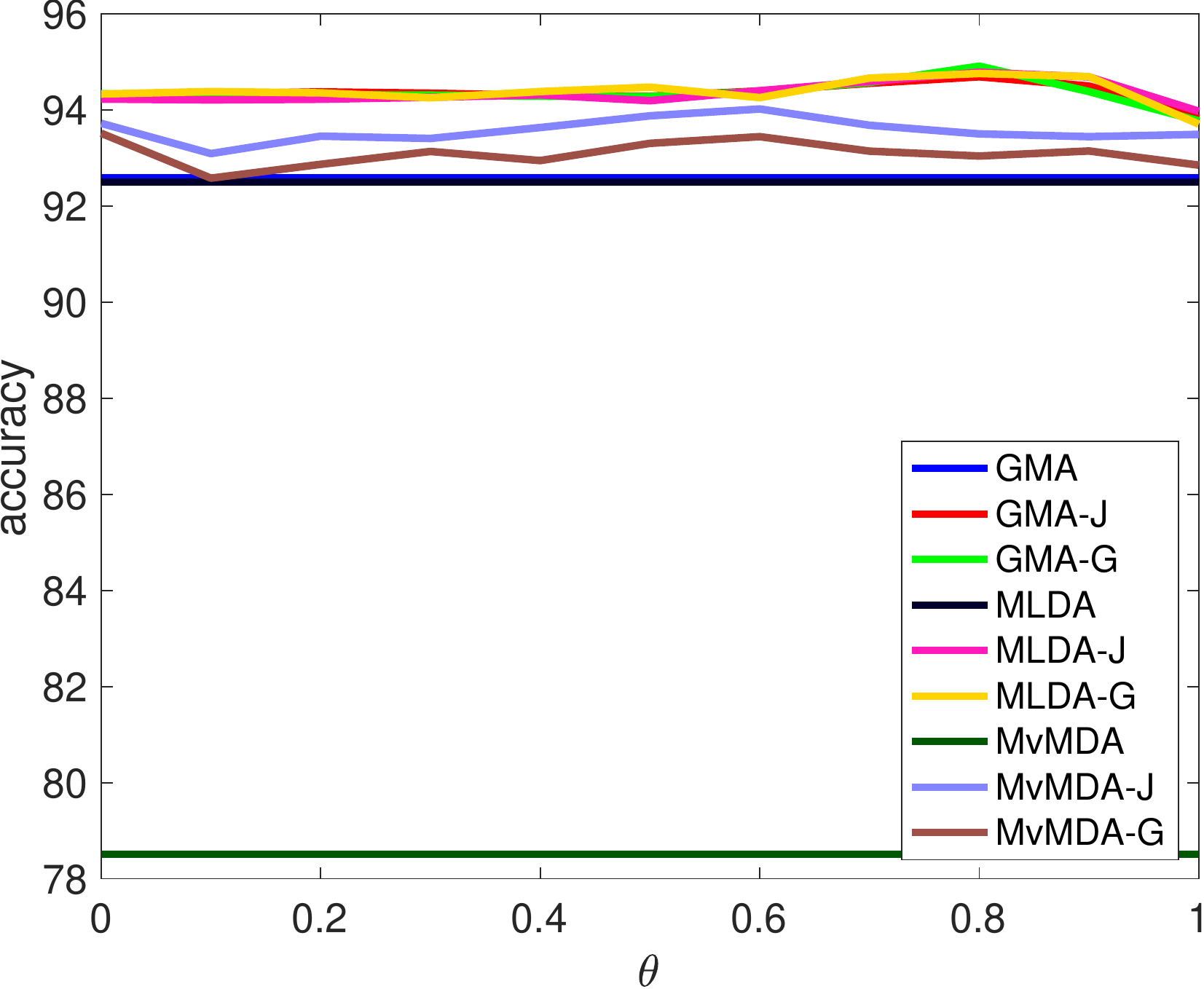}}\\  \hline
		\rotatebox[origin=c]{90}{varying $k$}&
		\raisebox{-.5\height}{\includegraphics[width=0.29\textwidth]{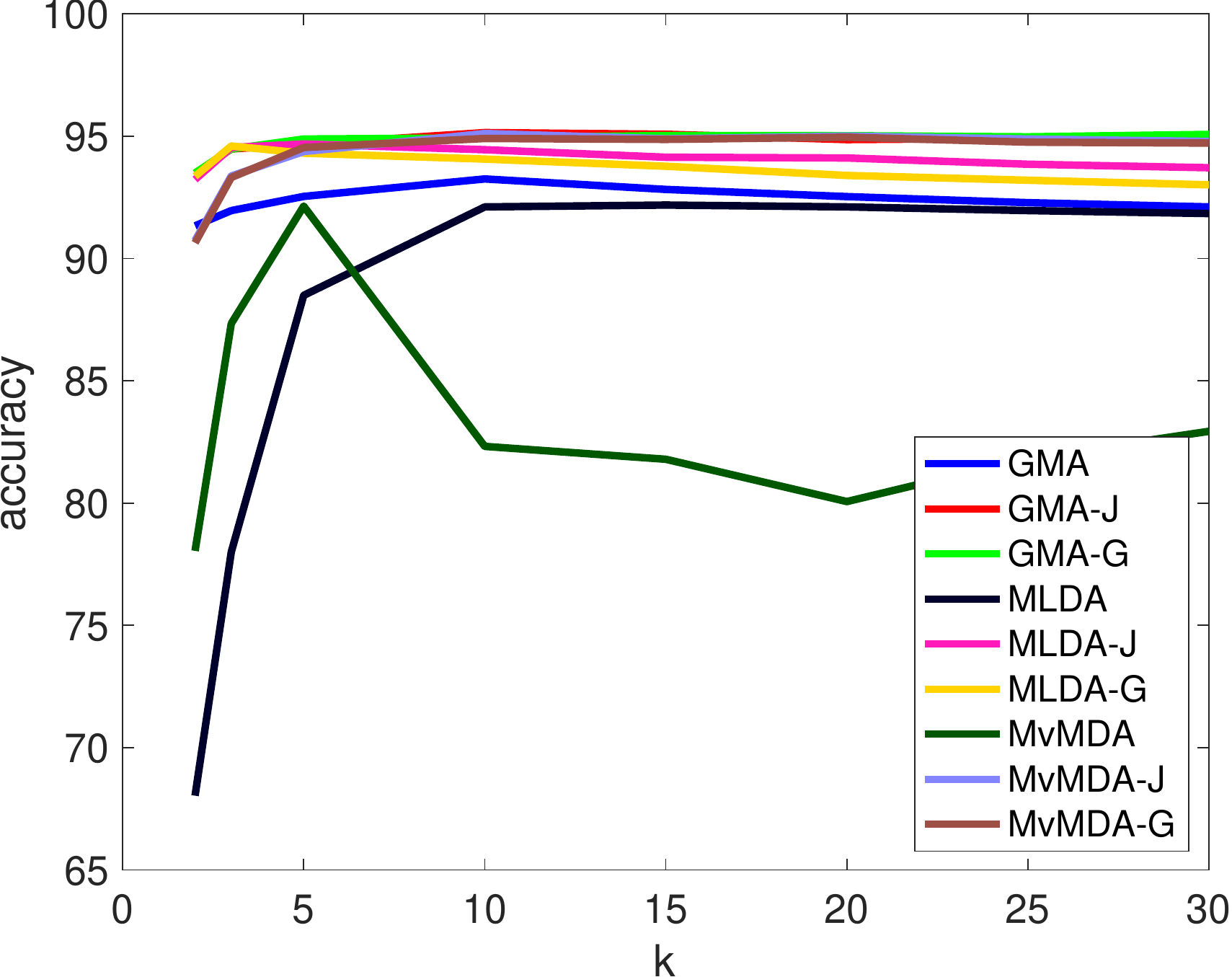}} &
		\raisebox{-.5\height}{\includegraphics[width=0.29\textwidth]{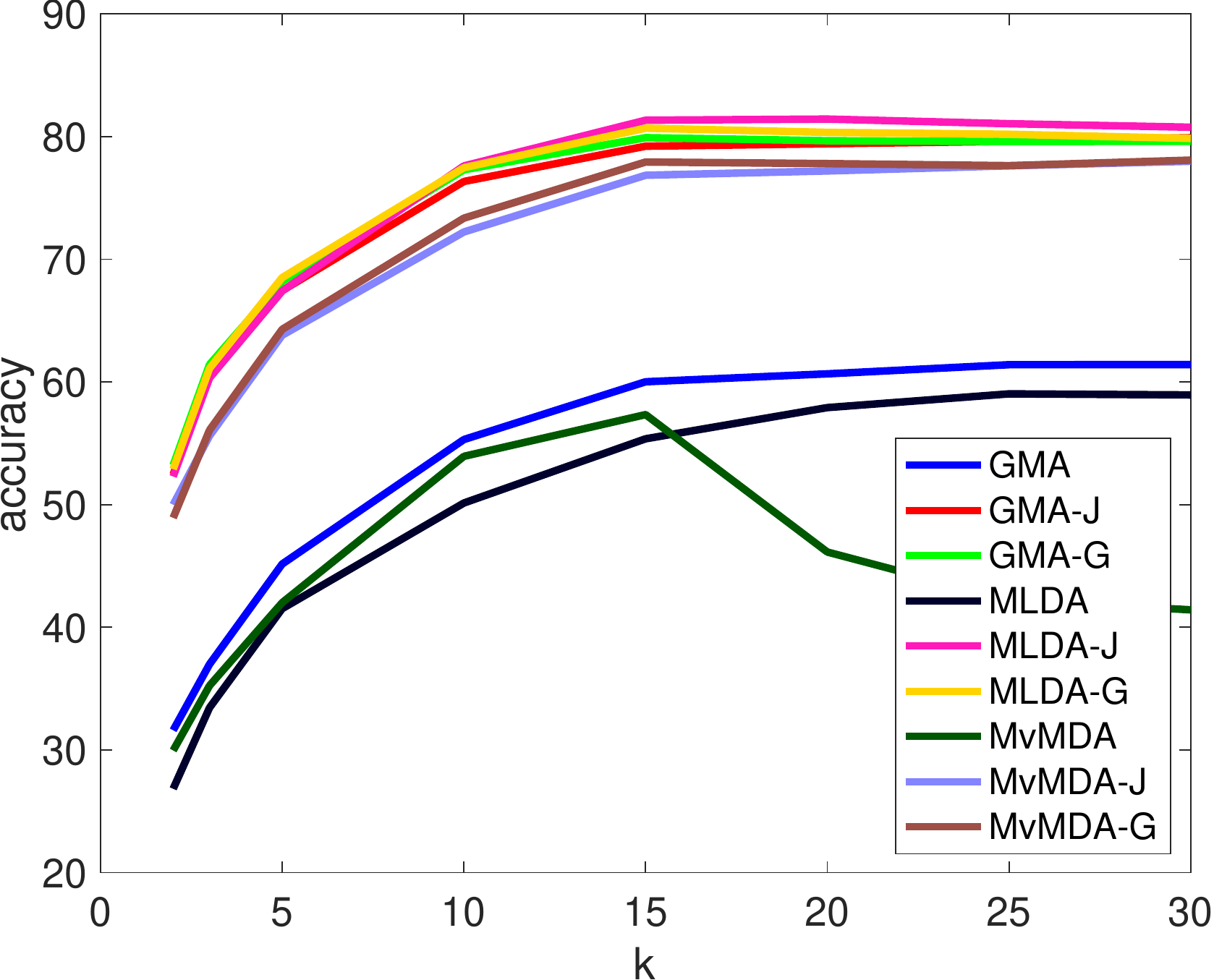}} &
		\raisebox{-.5\height}{\includegraphics[width=0.29\textwidth]{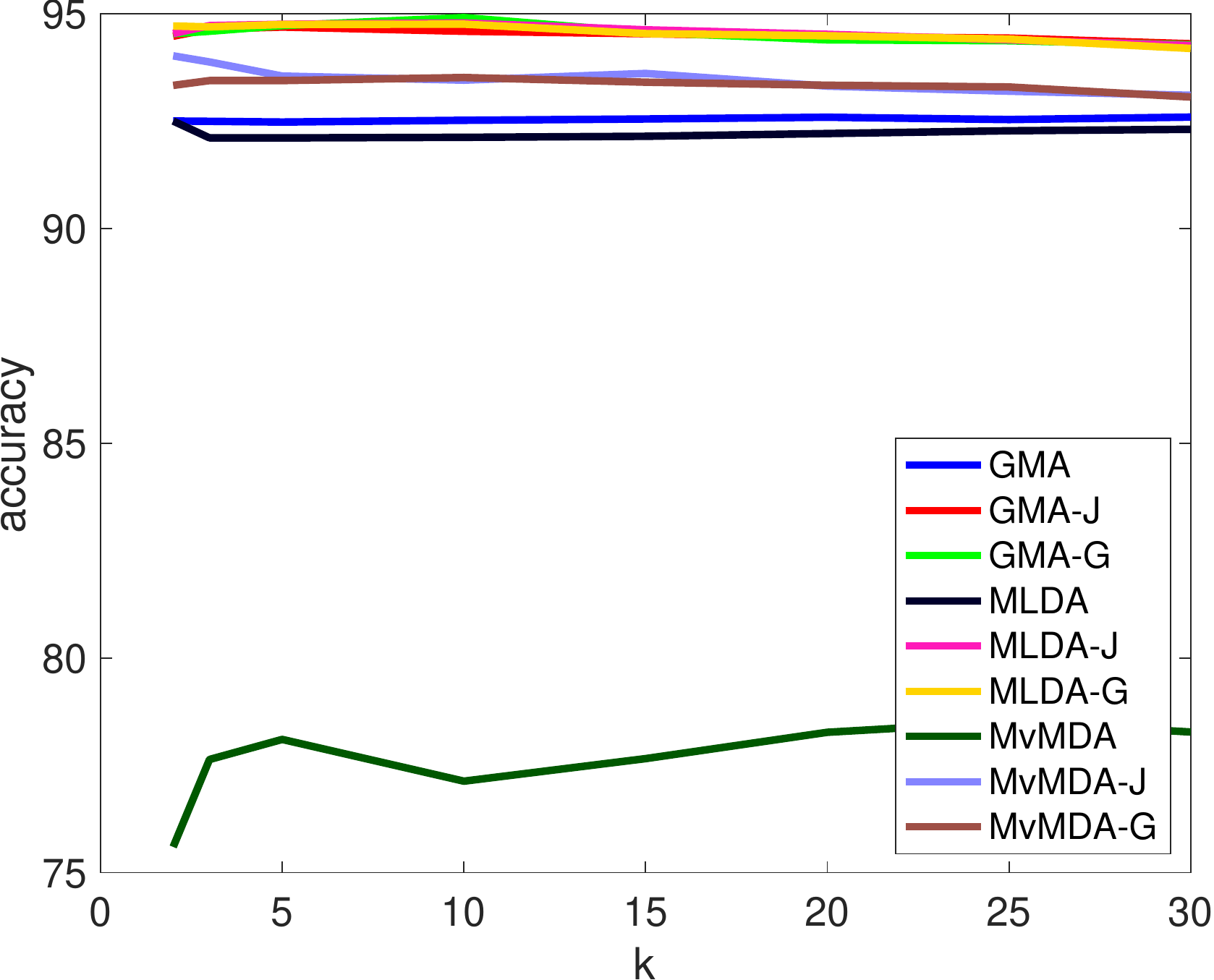}}\\			
		\hline
	\end{tabular}
	\caption{Accuracies of compared methods on three data sets by varying $k \in [2,6]$ and $\theta\in[0,1]$.}
	\label{fig:theta-k}
\end{figure}

In Figure~\ref{fig:theta-k} (the 1st row), we also report the classification accuracy of $9$ methods with varying $\theta \in [0,1]$.
On Caltech101-7, Caltech101-20 and mfeat, the best results of our proposed methods are roughly around $\theta=0.5$.
However, different behaviors are found on Scene15 and Ads. $\theta$ does not show significantly impact on Ads,
but it produces better accuracy on Scene15 as $\theta$ increases. For almost all $\theta$,
our proposed methods consistently outperform baselines.
This implies that $\theta$ introduced in (\ref{eq:OMA}) can be useful to find better projection matrices for multi-view feature extraction.
We further show the trend of classification accuracy of compared methods by varying the dimension of latent common space $k$
in Figure~\ref{fig:theta-k} (the 2nd row). For any fixed $k$, our proposed methods outperform their counterparts.
Importantly, our proposed methods nearly reach their best performances all for fairly small $k$, while baseline methods have to use
larger $k$ to match that. This can be plausibly explained, namely, orthonormal bases retain less redundant information than
non-orthonormal ones.
We also observed that MvMDA behaves  unstably for large $k$ on Caltech101-7 and Scene15
since the accuracy drops too significantly but this does not happen to OMvMDA-G and OMvMDA-J.
In summary, our proposed models not only demonstrate superior performances to baseline methods
but also are more robust to data noise and can achieve the same or better performance at smaller $k$.
Small $k$ implies fast computations if an iterative eigen-solver \cite{bddrv:2000,govl:2013,li:2015} is
used in Algorithm~\ref{alg:SCF} and that is extremely useful for large scale real world applications,
such as cross-modal retrieval \cite{cao2017generalized}, for a fast response time due to less computation costs of
pairwise distances in a lower dimensional space.

\section{Conclusions}\label{sec:concl}
We have completed a thorough investigation, both in theory and numerical solutions, of the trace ratio maximization problem
$$
\max_{X^{\T}X=I_k}\frac {\tr(X^{\T}AX+X^{\T}D)}{[\tr(X^{\T}BX)]^{\theta}}.
$$
At least three special cases of it have been well studied in the past decades because of their immediate applications to
data science. Our main results include an NEPv (nonlinear eigenvalue problem
with eigenvector dependency, a term coined in \cite{cazb:2018}) formulation of its KKT condition, necessary conditions for its
local and global maximizers, a complete picture of the role played by $D$ on the maximizers, a guaranteed convergent numerical method
and its full convergent analysis. As an application of these results, we propose a new orthogonal multi-view subspace framework
and experiment on its 6 instantiated models in either supervised or unsupervised setting. Numerical results demonstrate the new models
often outperform existing baselines.

Although we have been limiting our discussion on real matrices, the developments in this paper can be straightforwardly extended to
complex matrices with minor modifications, namely, replace all $\bbR$ by $\bbC$ (complex numbers) and all transposes $\T$ by complex conjugate
transpose $\HH$.

\end{document}